\DeclareMathOperator{\fRHom}{R\mathcal{H}om}
\DeclareMathOperator{\Hom}{Hom}
\DeclareMathOperator{\id}{id}
\DeclareMathOperator{\opp}{op}
\DeclareMathOperator{\Mod}{Mod}
\DeclareMathOperator{\Hn}{H}
\DeclareMathOperator{\fHom}{\mathcal{H}om}
\DeclareMathOperator{\gr}{gr}
\DeclareMathOperator{\Rg}{R\Gamma}
\DeclareMathOperator{\Supp}{Supp}
\begin{document}

\theoremstyle{plain} 
\newtheorem{theorem}{Theorem}[section]
\newtheorem{corollary}[theorem]{Corollary}
\newtheorem{proposition}[theorem]{Proposition}
\newtheorem{lemma}[theorem]{Lemma}
\theoremstyle{definition} 
\newtheorem{definition}[theorem]{Definition}
\newtheorem{example}[theorem]{Example}
\newtheorem{examples}[theorem]{Examples}
\newtheorem{remark}[theorem]{Remark}
\newtheorem{notation}[theorem]{Notations}

\numberwithin{equation}{section}

\newcommand{\On}[1]{\mathcal{O}_{#1}}
\newcommand{\En}[1]{\mathcal{E}_{#1}}
\newcommand{\Fn}[1]{\mathcal{F}_{#1}} 
\newcommand{\tFn}[1]{\mathcal{\tilde{F}}_{#1}}
\newcommand{\hum}[1]{hom_{\mathcal{A}}({#1})}
\newcommand{\hcl}[2]{#1_0 \lbrack #1_1|#1_2|\ldots|#1_{#2} \rbrack}
\newcommand{\hclp}[3]{#1_0 \lbrack #1_1|#1_2|\ldots|#3|\ldots|#1_{#2} \rbrack}
\newcommand{\catMod}{\mathsf{Mod}}
\newcommand{\Der}{\mathsf{D}}
\newcommand{\Ds}{D_{\mathbb{C}}}
\newcommand{\DG}{\mathsf{D}^{b}_{dg,\mathbb{R}-\mathsf{C}}(\mathbb{C}_X)}
\newcommand{\lI}{[\mspace{-1.5 mu} [}
\newcommand{\rI}{] \mspace{-1.5 mu} ]}
\newcommand{\Ku}[2]{\mathfrak{K}_{#1,#2}}
\newcommand{\iKu}[2]{\mathfrak{K^{-1}}_{#1,#2}}
\newcommand{\Be}{B^{e}}
\newcommand{\op}[1]{#1^{\opp}}
\newcommand{\N}{\mathbb{N}}
\newcommand{\Ab}[1]{#1/\lbrack #1 , #1 \rbrack}
\newcommand{\Du}{\mathbb{D}}
\newcommand{\C}{\mathbb{C}}
\newcommand{\Z}{\mathbb{Z}}
\newcommand{\w}{\omega}
\newcommand{\K}{\mathcal{K}}
\newcommand{\Hoc}{\mathcal{H}\mathcal{H}}
\newcommand{\env}[1]{{\vphantom{#1}}^{e}{#1}}
\newcommand{\eA}{{}^eA}
\newcommand{\eB}{{}^eB}
\newcommand{\eC}{{}^eC}
\newcommand{\cA}{\mathcal{A}} 
\newcommand{\cB}{\mathcal{B}}
\newcommand{\cD}{\mathcal{D}}
\newcommand{\cR}{\mathcal{R}}
\newcommand{\cI}{\mathcal{I}}
\newcommand{\cL}{\mathcal{L}}
\newcommand{\cO}{\mathcal{O}}
\newcommand{\cM}{\mathcal{M}}
\newcommand{\cN}{\mathcal{N}}
\newcommand{\cK}{\mathcal{K}}
\newcommand{\cC}{\mathcal{C}}
\newcommand{\cF}{\mathcal{F}}
\newcommand{\cG}{\mathcal{G}}
\newcommand{\cP}{\mathcal{P}}
\newcommand{\cQ}{\mathcal{Q}}
\newcommand{\cU}{\mathcal{U}}
\newcommand{\cE}{\mathcal{E}}
\newcommand{\cS}{\mathcal{S}}
\newcommand{\cT}{\mathcal{T}}
\newcommand{\chE}{\widehat{\mathcal{E}}}
\newcommand{\cW}{\mathcal{W}}
\newcommand{\chW}{\widehat{\mathcal{W}}}
\newcommand{\Hper}{\Hn^0_{\textrm{per}}}
\newcommand{\Dper}{\Der_{\mathrm{perf}}}
\newcommand{\Yo}{\textrm{Y}}
\newcommand{\gqcoh}{\mathrm{gqcoh}}
\newcommand{\coh}{\mathrm{coh}}
\newcommand{\cc}{\mathrm{cc}}
\newcommand{\qcc}{\mathrm{qcc}}
\newcommand{\gd}{\mathrm{gd}}
\newcommand{\qcoh}{\mathrm{qcoh}}
\newcommand{\lcl}{\mathrm{lcl}}
\newcommand{\fin}{\mathrm{fin}}
\newcommand{\obplus}[1][i \in I]{\underset{#1}{\overline{\bigoplus}}}
\newcommand{\Lte}{\mathop{\otimes}\limits^{\rm L}}
\newcommand{\te}{\mathop{\otimes}\limits^{}}
\newcommand{\pt}{\textnormal{pt}}
\newcommand{\A}[1][X]{\cA_{{#1}}}
\newcommand{\dA}[1][X]{\cC_{X_{#1}}}
\newcommand{\conv}[1][]{\mathop{\circ}\limits_{#1}}
\newcommand{\sconv}[1][]{\mathop{\ast}\limits_{#1}}
\newcommand{\reim}[1]{\textnormal{R}{#1}_!}
\newcommand{\roim}[1]{\textnormal{R}{#1}_\ast}
\newcommand{\ldetens}{\overset{\mathnormal{L}}{\underline{\boxtimes}}}
\newcommand{\br}{\bigr)}
\newcommand{\bl}{\bigl(}
\newcommand{\sC}{\mathscr{C}}
\newcommand{\ucat}{\mathbf{1}}
\newcommand{\ubtimes}{\underline{\boxtimes}}
\newcommand{\uLte}{\mathop{\underline{\otimes}}\limits^{\rm L}} 
\newcommand{\Lp}{\mathrm{L}p}
\newcommand{\pder}[3][]{\frac{\partial^{#1}#2}{\partial{#3}}}
\newcommand{\reg}{\mathrm{reg}}
\newcommand{\sing}{\mathrm{sing}}
\newcommand{\fExt}{\mathcal{E}xt}
\newcommand{\fTor}{\mathcal{T}or}
\newcommand{\fEnd}{\mathcal{E}nd}
\newcommand{\dL}{\mathrm{L}}
\newcommand{\fgd}{\mathrm{fgd}}
\newcommand{\Zl}{Z}

\title{Quantization of spectral curves and DQ-modules}
\author{Fran\c{c}ois Petit\footnote{The author has been fully supported in the frame of the OPEN scheme of the Fonds National de la Recherche (FNR) with the project QUANTMOD O13/570706 \newline \noindent \textit{2010 Mathematics Subject Classification.} 53D55, 32C38, 14D21, 14F10.}}
\date{}

\maketitle

\begin{abstract} 
Given an holomorphic Higgs bundle on a compact Riemann surface of genus greater than one, we prove the existence of an holonomic DQ-module supported by the spectral curve associated to this bundle. Then, we relate quantum curves arising in various situations (quantization of spectral curves of Higgs Bundles, quantization of the $A$-polynomial...) to DQ-modules and show that a quantum curve and the DQ-module canonically associated to it have isomorphic sheaves of solutions.
\end{abstract}
\setcounter{tocdepth}{2}
\tableofcontents
\section{Introduction}
Spectral curves arose first in the study of certain integrable systems as the zero locus of families of characteristic polynomials. The notion of spectral curves has nowadays a broader meaning and the quantization of these curves in terms of quantum curves has recently received a lot of attention (see for instance \cite{BorA,Dij,DoM,Mul,Guche,Gulec,Gu}). Quantum curves appear naturally in the study of many enumerative problems of algebraic geometry. For example the exponential generating  function of Gromow-Witten invariants on $\mathbb{P}^1$ is the solution of a certain quantum curve computed in \cite{DNMPS}. They also play a key role in a conjecture relating classical knot invariants as the $A$-polynomial and quantum knot invariant as the Jones or the HOMFLY polynomial. More precisely, the quantization of the $A$-polynomial via the topological recursion of Eynard and Orentin \cite{EO}--a recursive procedure conjecturally related to WKB expansion--should allow to recover the colored Jones polynomials. It is worth noticing that though, an intrisic definition of quantum curves quantizing spectral curves of Higgs bundles has been proposed in \cite{Dij}, this definition is not able to capture many of the instances of quantum curves. Indeed, in this definition, quantum curves are interpreted in terms of modules over the Rees algebra of the sheaf of holomorphic differential operators filtered by the order. Such objects allow only to quantize subvarieties of a cotangent bundle and it is well-known that not all spectral curves are subvarieties of a cotangent bundle.      
For instance spectral curves defined by $A$-polynomials lie inside the symplectic surface $(\C^\ast \times \C^\ast, dx_1 \wedge dx_2 /(x_1x_2))$  (\cite{BorA,dim,Gu}). Thus, they are Lagrangian subvarieties of this symplectic manifold. More generally, spectral curves can also be considered as Lagrangian subvarieties of holomorphic symplectic surfaces. This aspect raises naturally the questions of the quantization of spectral curves from the point of view of deformation quantization and the relation between quantum curves and deformation quantization. 

In this paper, we study the quantization of spectral curves from the standpoint of deformation quantization and more specifically from the point of view of Deformation Quantization modules (DQ-modules) and suggest to define quantum curves as certain type of DQ-modules. Indeed, the quantization of spectral curves is a special instance of the problem of quantizing a Lagrangian subvariety inside an holomorphic symplectic manifold for which the theory of DQ-modules--introduced by Kontsevich in \cite{Kos} and thoroughly studied by Kashiwara and Schapira in \cite{KS3}--provides an adequate framework (see \cite{GinPech} and \cite{DS}). In this setting, quantum curves are interpreted as DQ-modules. Our approach deals with the various type of quantum curves (for instance those given by differential operators, translation operators  which control the  generating function on Gromov-Witten invariants on $\mathbb{P}^1$ or scaling operators arising from the quantization of $A$-polynomials) in a uniform way which should allow to set up and study duality between various type of quantum curves. This point of view provides other benefits. For instance the localisation with respects to the deformation parameters of the sheaf of solutions of a quantum curve (understood as a DQ-modules) is a perverse sheaf. 

This paper is divided into three parts. In the first one, we briefly review  the theory of DQ-modules and present some examples of star-algebras which are related to quantum curves. Then, we describe the canonical quantization of the cotangent bundle of a complex manifold as constructed by Polesello and Schapira in \cite{PolSch}. In the second one, we study the quantization of spectral curves  associated to Higgs bundles via DQ-modules theory. We establish Theorem \ref{thm:main}, the main result of this paper, which states, in particular, that given a compact Riemann surface of genus at least two and a Higgs bundle, there always exists an holonomic DQ-module supported by the spectral curve associated to this bundle and that if the spectral curve is smooth and the Higgs bundle is of rank greater that one, this holonomic module is simple which implies that this quantization is locally unique. Finally, in the third one, we set-up a general framework to relate quantum curves and DQ-modules and show that many examples of quantum curves can be interpreted as DQ-modules and that they have the same sheaves of solutions. It is worth noticing that a systematic comparison between DQ-modules and quantum curves is difficult since there is no general theory of quantum curves per se. 

Let us describe the second and third part of this paper in more detail. The second is motivated by the paper \cite{Mul}in which the formulation of the topological recursion for spectral curves in the cotangent bundle of an arbitrary Riemann surface suggests that it should be possible to produce a canonical quantization of a spectral curve associated to a Higgs bundle via the topological recursion (One of the issues is the globalization of the quantization provided locally by the topological recursion). Here, we focus on the existence of a global quantization of the spectral curve without trying to elucidate the relation with topological recursion. We prove that under some mild assumptions, a quantization always exists and that it is locally unique (see Theorem \ref{thm:main}). Our result proves the existence of the quantization of the spectral curve associated to a Higgs bundle in great generality and clarifies certain aspects of \cite{Mul} which uses the language of Rees D-modules to study the quantization of spectral curves associated to Higgs bundles. If one uses Rees D-modules to quantize a spectral curve, a technical difficulty arises from the fact that this curve will not be the support of a Rees D-modules quantizing it but will be its semi-classical characteristic variety (a suitable non-conic version of  the characteristic variety). In the framework of the theory of DQ-modules, in order to quantize spectral curves one has to first choose a quantization of the symplectic surface i.e a DQ-algebra whose associated symplectic structure is the symplectic structure of the surface considered. In the case of the cotangent bundle, we use its canonical quantization constructed by Polesello and Schapira. Then quantizing a spectral curve amounts to construct a coherent DQ-module without $\hbar$-torsion the support of which is the spectral curve. The formulation in the language of DQ-modules of the problem of the existence of a quantization of a spectral curve  allows to rephrase it in terms of cohomology of sheaves (see Proposition \ref{prop:quantization}) and makes the proofs simpler.

In the third part of the paper, we compare DQ-modules and quantum curves. For that purpose, following \cite{BorA,Dij,DNMPS,Gu,MulHur}, we interpret quantum curves as sections of certain algebras of operators (these algebras of operators should be understood as the analogue of the Rees algebra of differential operators filtered by the order in setting where the symplectic surface being quantized is not a cotangent bundle). To achieve the comparaison with DQ-modules, we introduce a notion of polarization of a DQ-algebra which is the counterpart in the language of DQ-modules of the notion of polarization used in the framework of quantum curves. Using this notion of polarization, we show that the different algebras of operators corresponding to various type of quantum curves embed in a flat way in appropriate DQ-algebras that we have previously defined (see Proposition \ref{prop:DQflatDH}, \ref{prop:flatS}, \ref{prop:flatT}). Finally, we establish that the sheaf of solutions associated to a quantum curve is isomorphic to the sheaf of solutions of the DQ-module canonically associated to it (see Corollary \ref{cor:SolRD}, \ref{cor:SolS} and \ref{cor:SolT}). 

\noindent \textbf{Acknowledgement:}
We would like to thanks Motohico Mulase for his hospitality and many useful discussions as well as his encouragement to study quantum curves from the standpoint of DQ-modules theory. We are also grateful to Pierre Schapira for many insightful discussions and scientific advice. We also wish to thank Christopher Dodd for many interesting discussions, Ga\"etan Borot for organizing the workshop \textit{Geometric quantization and topological recursion} and interesting conversations, St\'ephane Korvers, Martin Schlichenmaier, Jean-Marc Schlenker, Nicolas Tholozan, J\'er\'emy Toulisse and Yannick Voglaire for their involvement in the \textit{Working group on Geometry of moduli spaces and topological recursion}.

\section{DQ-algebras and DQ-modules}
In this section, we review some classical facts concerning DQ-algebras and DQ-modules. For a detailed study of these objects we refer the reader to \cite{KS3}.

\subsection{DQ-algebras}

We denote by $\C^\hbar$ the ring of formal power series with complex coefficients in the variable $\hbar$ and by $\C^{\hbar,loc}$ the field of formal Laurent series. Let $(X,\cO_X)$ be a complex manifold. We define the following sheaf of $\C^\hbar$-algebras
\begin{equation*}
\cO_X^\hbar:=\varprojlim_{n \in \N} \cO_X \te_\C (\C^\hbar / \hbar^n \C^\hbar).
\end{equation*}

\begin{definition}
A star-product denoted $\star$ on $\cO_X^\hbar$ is a $\C^\hbar$-bilinear associative multiplication law satisfying
\begin{equation*}
f \star g = \sum_{i \geq 0} P_i(f,g) \hbar^i \;\; \textnormal{for every} \;f, \; g \in \cO_X,
\end{equation*}
where the $P_i$ are holomorphic bi-differential operators such that for every $f, g \in \cO_X, P_0(f,g)=fg$ and 
$P_i(1,f)=P_i(f,1)=0$ for $i>0$. The pair $(\cO_X^\hbar, \star)$ is called a star-algebra. 
\end{definition}

\begin{example}\label{ex:Moyal}
Consider the symplectic holomorphic manifold $X=T^\ast\C^{n}$ endowed with the symplectic coordinate system $(x;u)$ with $x=(x_1,\ldots,x_n)$ and  $u=(u_1,\ldots,u_n)$. It can be quantized by the following star-algebra $(\cO_{X} ^\hbar, \star)$ where
\begin{equation*}
f \star g = \sum_{\alpha \in \N^n} \dfrac{\hbar^{|\alpha|}}{\alpha !} (\partial^\alpha_u f) (\partial^\alpha_x g).
\end{equation*}
In particular, on $T^\ast \C$ we get
\begin{equation*}
f \star g = \sum_{k \geq 0} \dfrac{\hbar^{k}}{k!} (\partial^k_u f) (\partial^k_x g).
\end{equation*}
\end{example}

\begin{example}\label{ex:expexp}
The symplectic manifold $X=(\C^\ast \times \C^\ast, \frac{dx_1 \wedge dx_2}{x_1 x_2})$ can be quantized by the following star-algebra $(\cO_{X} ^\hbar, \star)$ with
\begin{align*}
f \star g &= \sum_{k \geq 0} \dfrac{\hbar^{k}}{k!} (x_2 \partial_{x_2})^k(f) (x_1 \partial_{x_1})^k(g)\\
&=\sum_{k \geq 0} \left( \dfrac{\hbar^{k}}{k!} \left( \sum_{l=0}^k x_2^l \mathcal{S}_k^{(l)} \partial_{x_2}^{l} f \right) \left( \sum_{p=0}^k x_1^p \mathcal{S}_k^{(p)} \partial_{x_1}^{p} g \right) \right).
\end{align*}
where $\mathcal{S}_k^{(l)}$ is the number of ways of partitioning a set of $k$ elements into $l$ nonempty sets. These numbers are called the Stirling numbers of the second kind.
Notice that $x_1 \star x_2= e^\hbar (x_2 \star x_1)$. 
\end{example}

\begin{example}\label{ex:affexp}
The symplectic manifold $X=(\C^\ast \times \C, \frac{dx_1 \wedge dx_2}{x_1})$ can be quantized by the following star-algebra $(\cO_{X} ^\hbar, \star)$ with
\begin{align*}
f \star g &= \sum_{k \geq 0} \dfrac{\hbar^{k}}{k!}  \partial_{x_2}^k(f) (x_1 \partial_{x_1})^k(g)\\
&= \sum_{k \geq 0} \left( \dfrac{\hbar^{k}}{k!}  \left(\partial_{x_2}^{k} f  \right) \left( \sum_{l=0}^k x_1^l \mathcal{S}_k^{(l)} \partial_{x_1}^{l} g \right) \right).
\end{align*}
\end{example}

\begin{definition}
A DQ-algebra $\cA_X$ on $X$ is a $\C_X^\hbar$-algebra locally isomorphic to a star-algebra as a $\C_X^\hbar$-algebra.
\end{definition}

There is a unique $\C_X$-algebra isomorphism $\cA_X / \hbar \cA_X \stackrel{\sim}{\longrightarrow} \cO_X$. We write $\sigma_0: \cA_X \twoheadrightarrow \cO_X$ for the epimorphism of $\C_X$-algebras defined by
\begin{equation*}
\cA_X \to \cA_X / \hbar \cA_X \stackrel{\sim}{\longrightarrow} \cO_X.
\end{equation*}
This induces a Poisson bracket $\lbrace \cdot,\cdot \rbrace$ on $\cO_X$ defined as follows:
\begin{equation*}
\textnormal{for every $a, \; b \in \cA_X$}, \; \lbrace \sigma_0(a),\sigma_0(b) \rbrace=\sigma_0(\hbar^{-1}(ab-ba)).
 \end{equation*}
A DQ-algebra $\cA_X$ can be endowed with the canonical filtration defined by
\begin{equation}\label{filt:DQalg}
\cA_X(k)= 
\begin{cases}
\hbar^{-k} \cA_X & \textnormal{if $k<0$},\\
 \cA_X & \textnormal{if $k \geq 0$}.
\end{cases}
\end{equation}
Note that for every $k \leq 0$, $\cA_X(k) / \cA_X(k-1) \simeq \cO_X$.
\begin{notation}
\noindent (i) If $\cA_X$ is a DQ-algebra, we denote by $\cA_{X^a}$ the opposite algebra $\cA_X^{\opp}$. This algebra is still a DQ-algebra.

\noindent (ii) If $\cA_X$ is a DQ-algebra, we set $\cA_X^{loc}:=\C^{\hbar,loc} \te_{\C^\hbar} \cA_X$.
\end{notation}

We recall Proposition 2.2.12 from \cite{KS3} which allow to construct star-algebras. We follow closely their presentation. We denote by $\cD_X$ the sheaf of holomorphic differential operators on $X$ and set 
\begin{equation*}
\cD_X^\hbar \colon= \varprojlim_{n \in \N} \cD_X \te_\C (\C^\hbar / \hbar^n \C^\hbar).
\end{equation*}

Given a star algebra $\cA_X:=(\cO_X^\hbar,\star)$, there are two $\C^\hbar$-linear morphisms
\begin{align*}
\Phi^l \colon \cO_X^\hbar \to \cD^\hbar && \Phi^r \colon \cO_X^\hbar \to \cD^\hbar\\
 f \mapsto f \star (\cdot)                &&  f \mapsto (\cdot) \star f.
\end{align*}

Let $(x_1,\ldots,x_n)$ be a local coordinate system on $X$ and for $1 \leq i \leq n$ we set

\begin{equation*}
A_i \colon = \Phi^l(x_i) \qquad \textnormal{and} \qquad  B_i \colon = \Phi^r(x_i).
\end{equation*}

The $A_i$ and $Bi$ with $1 \leq i \leq n$ are sections of $\cD_X^\hbar$ satisfying the following conditions

\begin{equation}\label{eq:quantcond}
\begin{cases}
A_i(1)=B_i(1)=x_i,\\
A_i \equiv x_i \hspace{-0.2cm} \mod \hbar \cD_X^\hbar, \; B_i \equiv x_i \hspace{-0.2cm} \mod \hbar \cD_X^\hbar,\\
[A_i, B_j]=0 \;(i,j=1,\ldots,n).
\end{cases}
\end{equation}
Reciprocally,
\begin{proposition}\label{prop:makestaralg}
Let $\lbrace A_i, B_j \rbrace_{1 \leq i,j \leq n}$ be sections of $\cD_X^\hbar$ satisfying conditions \eqref{eq:quantcond}. It defines $\cA_X \subset \cD_X^\hbar$ by
\begin{equation*}
\cA_X= \lbrace a \in \cD_X^\hbar; [a,B_i]=0,i=1,\ldots,n \rbrace
\end{equation*}
and define the $\C_X^\hbar$-linear map $\psi:\cA_X \to \cO_X^\hbar$, $a \mapsto a(1)$. Then,
\begin{enumerate}[(a)]
\item $\psi$ is a $\C_X^\hbar$-linear isomorphism,
\item the product on $\cO_X^\hbar$ given by $\psi(a) \star \psi(b):= \psi(a \cdot b)$ is a star-product and $\psi^{-1}:\cO_X^\hbar \to \cA_X$ is such that
\begin{equation*}
\psi^{-1}(f)\psi^{-1}(g)=\sum_{i \geq 0} \psi^{-1}(P_i(f,g)) \hbar^i \quad \textnormal{for every $f,g \in \cO_X$}
\end{equation*}
where the $P_i$ are bidifferential operators such that
\begin{equation*}
f \star g=\sum_{i \geq 0} P_i(f,g) \hbar^i \quad \textnormal{for every $f,g \in \cO_X$}.
\end{equation*}
\item  The algebra $\cA_X^{\opp}$ is obtained by replacing $A_i$ with $B_i$ for $ 1 \leq i\leq n$ in the above construction. 

\item \label{it:rep} Let $(y_1,\ldots,y_n)$ be a local coordinate system on a copy of $X$. The section $y_i-A_i \in \cD_{X \times Y}^\hbar$ are invertible on $\lbrace x_i \neq y_i \rbrace$ and if $f \in \cO_X$, the section
\begin{equation*}
G(f)=\frac{1}{(2i \pi)^n}\oint f(y) (y_1-A_1)^{-1} \ldots (y_n-A_n)^{-1} dy_1\ldots dy_n
\end{equation*} 
is such that for all $1 \leq i \leq n$, $[G(f),B_i]=0$ and $\psi(f) \equiv f \mod \hbar$.
\end{enumerate}
\end{proposition}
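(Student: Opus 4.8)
The plan is to verify the four assertions in order, working locally in the coordinate system $(x_1,\ldots,x_n)$ and exploiting that the $A_i,B_j$ are differential operators with symbol $x_i$ modulo $\hbar$. First I would analyze the structure of $\cA_X=\{a\in\cD_X^\hbar; [a,B_i]=0\}$. Since $[A_i,B_j]=0$, each $A_i$ lies in $\cA_X$; more importantly, I would show that $\cA_X$ is, as a $\C_X^\hbar$-module, free on the monomials $A^\alpha=A_1^{\alpha_1}\cdots A_n^{\alpha_n}$, $\alpha\in\N^n$. The key point is that because $B_i\equiv x_i\bmod\hbar$, the commutator $[\,\cdot\,,B_i]$ is, modulo $\hbar$, the Poisson-type derivation $\{\,\cdot\,,x_i\}=-\partial_{\xi_i}$ on symbols; hence solving $[a,B_i]=0$ amounts, order by order in $\hbar$, to solving equations whose principal part forces the $\xi$-dependence of the symbol to be accounted for exactly by the $A_i$'s. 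This is the heart of item (a): the map $\psi(a)=a(1)$ is injective because an $a\in\cA_X$ with $a(1)=0$ must, by an $\hbar$-adic induction using the vanishing of the principal symbol argument, be identically zero; and it is surjective because, given $f\in\cO_X^\hbar$, one builds a preimage by the explicit formula in item (d), or inductively by correcting one power of $\hbar$ at a time.

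For item (b), once $\psi$ is a bijection, transporting the (associative) multiplication of $\cA_X\subset\cD_X^\hbar$ through $\psi$ automatically yields an associative $\C_X^\hbar$-bilinear product $\star$ on $\cO_X^\hbar$. One must check it is a star-product in the sense of the first definition: that $f\star g=\sum_i P_i(f,g)\hbar^i$ with $P_i$ bidifferential, $P_0(f,g)=fg$, and $P_i(1,f)=P_i(f,1)=0$ for $i>0$. The normalization $A_i(1)=B_i(1)=x_i$ together with $\psi^{-1}(1)=1$ (which follows since the constant operator $1$ commutes with every $B_i$ and sends $1\mapsto 1$) gives the unitality conditions; that $P_0(f,g)=fg$ follows from $A_i\equiv x_i\bmod\hbar$, i.e. $\psi^{-1}(f)\equiv f\bmod\hbar$ as operators, so the $\hbar^0$-term of $\psi^{-1}(f)\psi^{-1}(g)$ acting on $1$ is $f\cdot g$; and bidifferentiality of the $P_i$ is inherited from the fact that the whole construction takes place inside $\cD_X^\hbar$ and each $\psi^{-1}(f)$ depends differentially on $f$.

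Item (c) is essentially a symmetry observation: the roles of the $A_i$ and $B_i$ are interchanged by passing to the opposite ring $\cD_X^{\hbar,\opp}$, since $[a,B_i]=0$ in $\cD_X^\hbar$ corresponds to $[a,A_i]=0$ after swapping the two families and the multiplication order, and $\psi$ is unchanged since it only records the action on $1$; one then checks that the resulting star-product is the opposite of the one from $\{A_i,B_j\}$, matching $\cA_X^{\opp}$. Item (d) is the concrete construction that also powers surjectivity in (a): one shows $y_i-A_i$ is invertible on $\{x_i\neq y_i\}$ because its symbol $y_i-x_i$ is, writes $G(f)$ via the iterated Cauchy integral, checks $[G(f),B_i]=0$ (each factor $(y_j-A_j)^{-1}$ commutes with $B_i$ since $A_j$ does, and integration in $y$ preserves this), and computes $G(f)\equiv f\bmod\hbar$ by residues, using that modulo $\hbar$ the operator $(y_i-A_i)^{-1}$ is multiplication by $(y_i-x_i)^{-1}$ so the contour integral extracts the value $f(x)$.

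The main obstacle I expect is the injectivity/surjectivity core of item (a): making precise the claim that $[\,\cdot\,,B_i]=0$ pins down the "$\xi$-direction" of an operator symbol exactly up to the span of the $A_i$. Concretely one needs a clean induction on $\hbar$-order where the principal symbol equation $\{\sigma(a),x_i\}=0$ (for all $i$) forces $\sigma(a)\in\cO_X$, and then a descent argument matching lower-order terms against $\C_X^\hbar$-linear combinations of products of $A_i$'s; handling the non-commutativity and the bi-differential bookkeeping carefully is the delicate part, though it is exactly the content of Proposition 2.2.12 of \cite{KS3} which we are following.
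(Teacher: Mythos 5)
The paper does not actually prove this proposition: it is recalled verbatim from \cite[Proposition 2.2.12]{KS3} (the author even notes that item (d) is extracted from the proof given there), so there is no in-paper argument to compare yours against. Your outline is consistent with the standard Kashiwara--Schapira argument: injectivity of $\psi$ by $\hbar$-adic induction using that a differential operator commuting with all the $x_i$ (the reduction of the $B_i$ modulo $\hbar$) is a multiplication operator, surjectivity via the integral operator $G(f)$ of item (d) followed by order-by-order correction in $\hbar$, transport of structure for (b), and the $A\leftrightarrow B$ symmetry for (c). You correctly identified that (d) is not an afterthought but the engine of surjectivity in (a).

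One misstatement worth flagging, though it is not load-bearing: $\cA_X$ is \emph{not} free as a $\C_X^\hbar$-module on the monomials $A^\alpha$. Since $\psi$ is an isomorphism onto $\cO_X^\hbar$, the module $\cA_X$ is the $\hbar$-adic completion of something containing all of $\cO_X$, which is far larger than the $\C^\hbar$-span of the $A^\alpha$ (that span only accounts for polynomial symbols). Your actual proof of (a) does not use this freeness claim --- the injectivity step needs only that $[a_0,x_i]=0$ for all $i$ forces $a_0\in\cO_X$, and surjectivity comes from $G(f)$ --- so the argument survives, but you should delete the freeness assertion rather than try to prove it.
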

Notice that point \eqref{it:rep} is extracted from the proof of \cite[Proposition 2.2.12]{KS3}.

\subsubsection{The canonical quantization of the cotangent bundle}

Let $M$ be a complex manifold. The cotangent bundle of $M$, that is $X:=T^\ast M$ is endowed with the filtered, conic sheaf of $\C_X$-algbera $\widehat{\cE}_M$ of formal microdifferential operators and its subsheaf $\widehat{\cE}_X(0)$ of operators of order $m \leq 0$. These sheaves have been introduced in \cite{SKK} and we refer the reader to \cite{Sch} for an introduction.

On $X$ there is DQ-algebra $\chW_X(0)$. It has been constructed in \cite{PolSch} and we follow their presentation. 

We consider the complex line $\C$ endowed with the coordinate $t$ and denote by $(t;\tau)$ the associated symplectic coordinate on $T^\ast \C$. We set
\begin{equation*}
\chE_{T^\ast ( M \times \C), \hat{t}}(0)=\lbrace P \in \chE_{T^\ast  M}; [P, \partial_t]=0 \rbrace.
\end{equation*}
We consider the open subset of $T^\ast (M \times \C)$ given in local coordinate by
\begin{equation*}
T_{\tau \neq 0}^\ast(M\times \C)=\lbrace (x,t;\xi,\tau) \in T^\ast (M \times \C) | \tau \neq 0 \rbrace
\end{equation*}
and the map given in local coordinate by
\begin{equation*}
\rho \colon T_{\tau \neq 0}^\ast(M\times \C) \to T^\ast M, \; (x,t;\xi,\tau) \mapsto (x;\xi/ \tau). 
\end{equation*}
We obtain a $\C_X^\hbar$-algebra by considering the ring
\begin{equation*}
\chW_X(0) \colon=\rho_\ast (\chE_{T^\ast ( M \times \C)})(0)|_{ T_{\tau \neq 0}^\ast(M\times \C)})
\end{equation*}
where $\hbar$ acts as $\tau^{-1}$.
If $P$ is a section of $\chW_X(0)$, it can be written in a local symplectic coordinate system $(x_1,\ldots,x_n,u_1,\ldots,u_n)$ as
\begin{equation*}
P=\sum_{j \leq 0} f_j(x,u_i) \tau^j, \, f_j \in \cO_X,\; j \in \Z.
\end{equation*}
Setting $\hbar=\tau^{-1}$, we get
\begin{equation*}
P=\sum_{k \geq 0} f_k(x,u_i) \hbar^k, \, f_k \in \cO_X,\; k \in \N.
\end{equation*}

We denote by $\chW_X$ the localization of $\chW_X(0)$ with respect to $\hbar$. There is the following commutative diagram of natural morphisms of algebras.
\begin{equation*}
\xymatrix{
\pi^{-1} \cD_M \ar@{^{(}->}[r] & \chE_X \ar@{^{(}->}[r]^-{\phi} & \chW_X\\
 \pi^{-1}\cO_M \ar@{^{(}->}[r] \ar@{^{(}->}[u] \ar@{^{(}->}[r]& \chE_X(0) \ar@{^{(}->}[u] \ar@{^{(}->}[r] & \chW_X(0) \ar@{^{(}->}[u]
}
\end{equation*}
where the algebra map $\phi:\chE_X \to  \chW_X$ is given in a local symplectic coordinate $(x_1,\ldots,x_n,u_1,\ldots,u_n)$ system by $x_i \mapsto x_i$, $\partial_{x_i} \mapsto  \hbar^{-1} u_i$.

\subsection{DQ-modules}
Let $(X,\cO_X)$ be a complex manifold endowed with a DQ-algebra $\cA_X$. We denote by $\Mod(\cA_X)$ the Grothendieck category of $\cA_X$-modules and by $\Der(\cA_X)$ its derived category. We write $\Mod_\coh(\cA_X)$ for the Abelian full subcategory of $\Mod(\cA_X)$ the objects of which are the coherent modules over $\cA_X$.

Recall that if $\cA_X$ is a DQ-algebra, then $\cA_X  / \hbar \cA_X \simeq \cO_X$. This provides a functor
\begin{equation*}
\gr_\hbar:\Der(\cA_X) \to \Der(\cO_X), \; \cM \mapsto \cO_X \Lte_{\cA_X} \cM.
\end{equation*}
There is also a functor
\begin{equation*}
(\cdot)^{loc} \colon \Der(\cA_X) \to \Der(\cA^{loc}_X), \; \cM \mapsto \cM^{loc}:=\C^{\hbar,loc} \te_{\C^\hbar} \cM.
\end{equation*}

We will need the following result which is a special case of \cite[Theorem 1.2.5]{KS3}. 

\begin{theorem}\label{thm:annulationcohoco}
For any coherent $\cA_X$-module $\cM$ and any Stein open subset $U$ of $X$, we have $\Hn^j(U,\cM)=0$ for any $j>0$.
\end{theorem}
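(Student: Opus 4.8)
The statement to prove is Theorem~\ref{thm:annulationcohoco}: for a coherent $\cA_X$-module $\cM$ and a Stein open subset $U$ of $X$, we have $\Hn^j(U,\cM)=0$ for $j>0$. The natural plan is to bootstrap from Cartan's Theorem B for coherent $\cO_X$-modules, exploiting the $\hbar$-adic structure of $\cA_X$ and the identification $\cA_X/\hbar\cA_X\simeq\cO_X$.

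\textbf{Strategy.} First I would reduce to the case where $\cM$ has no $\hbar$-torsion, or more precisely handle the $\hbar$-torsion and $\hbar$-torsion-free parts separately. The key structural input is that a coherent $\cA_X$-module $\cM$ carries the $\hbar$-adic filtration $\hbar^n\cM$, and the associated graded pieces $\hbar^n\cM/\hbar^{n+1}\cM$ are coherent $\cO_X$-modules (this uses coherence of $\cM$ over the Noetherian-type ring $\cA_X$ and the fact that $\cA_X$ is $\hbar$-complete). For such $\cO_X$-coherent sheaves, Cartan's Theorem B gives $\Hn^j(U,\hbar^n\cM/\hbar^{n+1}\cM)=0$ for all $j>0$ and all $n$ since $U$ is Stein.

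\textbf{Key steps.} (1) Show $\cM$ is $\hbar$-complete, i.e. $\cM\xrightarrow{\sim}\varprojlim_n \cM/\hbar^n\cM$, as a consequence of coherence. (2) For each $n$, use the short exact sequences $0\to\hbar\cM/\hbar^{n+1}\cM\to\cM/\hbar^{n+1}\cM\to\cM/\hbar\cM\to 0$ together with the $\cO_X$-coherence of the graded pieces and Theorem~B on the Stein set $U$ to deduce, by induction on $n$, that $\Hn^j(U,\cM/\hbar^n\cM)=0$ for $j>0$; the long exact cohomology sequences collapse because both the sub and the quotient have vanishing higher cohomology. (3) Pass to the projective limit: since $U$ is Stein and paracompact, and the tower $(\cM/\hbar^n\cM)_n$ satisfies the Mittag-Leffler condition (the transition maps $\cM/\hbar^{n+1}\cM\twoheadrightarrow\cM/\hbar^n\cM$ are surjective), the derived functor $R\varprojlim$ commutes suitably with $R\Gamma(U,-)$, giving an exact sequence relating $\Hn^j(U,\cM)$ to $\varprojlim_n\Hn^j(U,\cM/\hbar^n\cM)$ and $\varprojlim^1_n\Hn^{j-1}(U,\cM/\hbar^n\cM)$. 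For $j>0$ the first term vanishes by step (2), and for $j>1$ the $\varprojlim^1$ term vanishes too; for $j=1$ the $\varprojlim^1$ term of the surjective tower $(\Gamma(U,\cM/\hbar^n\cM))_n$ vanishes by Mittag-Leffler, so $\Hn^1(U,\cM)=0$ as well.

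\textbf{Main obstacle.} The delicate point is step (3): the interchange of sheaf cohomology on the open set $U$ with the projective limit. One must be careful that $\cM=\varprojlim\cM/\hbar^n\cM$ holds as sheaves (not just stalkwise) and that the $R\varprojlim$ spectral sequence or the Milnor exact sequence applies to $R\Gamma(U,-)$; this requires $U$ to have finite cohomological dimension or at least that $\varprojlim^1$ of the relevant towers be controlled, which is where the Stein (hence Cartan~B) hypothesis and the Mittag-Leffler property of a tower of surjections both get used. Alternatively, one can cite the general Grauert--Remmert/Kashiwara--Schapira machinery of $\hbar$-adically complete modules on Stein spaces; but the self-contained route above via $\varprojlim^1$ is the cleanest. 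The rest is formal, given Theorem~B and the coherence structure theory of DQ-modules recalled from \cite{KS3}.
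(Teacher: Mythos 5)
The paper does not actually prove this statement: it is quoted verbatim as a special case of \cite[Theorem 1.2.5]{KS3}. Your sketch follows essentially the same route as the proof given there --- d\'evissage along the $\hbar$-adic filtration to reduce to Cartan's Theorem B for the coherent $\cO_X$-modules $\hbar^n\cM/\hbar^{n+1}\cM$, then a passage to the limit --- and the delicate point you correctly single out (that $\cM\simeq \mathrm{R}\varprojlim_n \cM/\hbar^n\cM$ and that $\mathrm{R}\Gamma(U,\cdot)$ commutes with $\mathrm{R}\varprojlim$) is precisely what Kashiwara--Schapira's notion of cohomological completeness of coherent modules is designed to handle.
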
 

\begin{proposition}{\cite[Corollary 2.3.4 and Corollary 2.3.18]{KS3}}
\begin{enumerate}[(i)]
\item Let $\cM \in \Der^{\mathrm{b}}_\coh(\cA_X)$. Then $\Supp(\cM)$ is a closed analytic subset of $X$.

\item Let $\cM \in \Der^{\mathrm{b}}_\coh(\cA^{loc}_X)$. Then $\Supp(\cM)$ is a closed analytic subset of $X$, coisotropic for the Poisson bracket on $X$ associated with $\cA_X$.
\end{enumerate}
\end{proposition}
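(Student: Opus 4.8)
The statement to prove is the Proposition citing \cite[Corollary 2.3.4 and Corollary 2.3.18]{KS3}, which asserts two things: (i) for a bounded complex of coherent $\cA_X$-modules, the support is a closed analytic subset; (ii) for a bounded complex of coherent $\cA_X^{loc}$-modules, the support is closed analytic \emph{and coisotropic} for the Poisson structure.

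For part (i), the plan is to reduce to the case of a single coherent module concentrated in one degree, since $\Supp(\cM) = \bigcup_j \Supp(H^j(\cM))$ for a bounded complex and each $H^j(\cM)$ is coherent. Then use the fact that $\cA_X$ is Noetherian (coherent as a sheaf of rings) and that $\Supp$ of a coherent module is locally defined as the zero locus of a coherent ideal — the annihilator — hence is a closed analytic subset. Concretely, locally $\cM$ has a finite presentation $\cA_X^p \to \cA_X^q \to \cM \to 0$, and one checks the support is the common zero locus of the maximal minors / Fitting ideals, which are coherent. Actually the cleanest route: $\gr_\hbar \cM$ is a bounded complex of coherent $\cO_X$-modules and $\Supp(\cM) = \Supp(\gr_\hbar \cM)$ (by Nakayama, since $\hbar$ is in the Jacobson radical locally and $\cM$ is coherent), and the support of a coherent $\cO_X$-module is classically analytic.

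For part (ii), the analyticity follows as in (i) after noting that $\cA_X^{loc}$ is still coherent. The substantive point is coisotropy. Here I would invoke the algebraic/microlocal argument: the associated graded of the canonical filtration, or rather the symbol calculus, shows that if $\cI$ is a (locally defined) coherent ideal of $\cA_X^{loc}$ defining the support and $J := \sigma_0(\hbar^{k}\cI \cap \cA_X)$ denotes the ideal of symbols, then $J$ is closed under the Poisson bracket $\{\cdot,\cdot\}$ on $\cO_X$ induced by $\cA_X$. This is because for $a, b \in \cA_X$ with symbols in $J$, the commutator $\hbar^{-1}(ab - ba)$ again has symbol given by the Poisson bracket $\{\sigma_0(a),\sigma_0(b)\}$, and $[a,b]$ lies in the ideal generated by $a$ and $b$. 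An ideal of $\cO_X$ closed under Poisson bracket has coisotropic zero locus (Gabber's theorem on the integrability of the characteristic variety, or its deformation-quantization analogue). The support being the zero locus of such a Poisson-closed ideal is therefore coisotropic.

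Since this Proposition is explicitly cited from \cite{KS3}, the honest "proof" is simply a pointer: the plan is to state that (i) is \cite[Corollary 2.3.4]{KS3} and (ii) is \cite[Corollary 2.3.18]{KS3}, whose proofs rest on the involutivity theorem (the DQ-module analogue of Gabber's theorem, established in \cite[\S 2.3]{KS3}). The main obstacle, were one to reprove it from scratch, is precisely the coisotropy in (ii): analyticity is soft commutative algebra, but involutivity of the characteristic variety is a genuine theorem requiring either Gabber's argument adapted to the filtered ring $\cA_X^{loc}$ or the good-filtration / microlocal machinery. I would therefore not reprove it here and simply cite \cite{KS3}.

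\begin{proof}
Both assertions are proved in \cite{KS3}: (i) is \cite[Corollary 2.3.4]{KS3} and (ii) is \cite[Corollary 2.3.18]{KS3}. For the reader's convenience we indicate the idea. For (i), writing $\cM$ as a bounded complex with coherent cohomology, one has $\Supp(\cM)=\bigcup_j \Supp(\Hn^j(\cM))$, and by Nakayama's lemma $\Supp(\Hn^j(\cM))=\Supp(\gr_\hbar \Hn^j(\cM))$, the support of a coherent $\cO_X$-module, which is a closed analytic subset. For (ii), analyticity follows in the same way since $\cA_X^{loc}$ is coherent; coisotropy rests on the involutivity theorem for the characteristic variety of a coherent module over a filtered ring whose associated graded is the structure sheaf of a symplectic manifold: locally the support is the zero locus of the ideal of principal symbols of a coherent defining ideal, and this ideal of $\cO_X$ is stable under the Poisson bracket because the symbol of the commutator $\hbar^{-1}(ab-ba)$ is $\{\sigma_0(a),\sigma_0(b)\}$ while $[a,b]$ lies in the ideal generated by $a$ and $b$; a Poisson-closed ideal has coisotropic zero locus.
\end{proof}
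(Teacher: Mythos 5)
Your proposal is correct and matches the paper, which gives no proof of this proposition at all but simply cites \cite[Corollary 2.3.4 and Corollary 2.3.18]{KS3}; deferring to that reference is exactly what is done here. Your supplementary sketch is a reasonable summary of the ideas behind the cited results (with the minor caveat that for $\cA_X^{loc}$-modules the Nakayama argument requires first choosing a local coherent $\cA_X$-lattice, since $\hbar$ is invertible in $\cA_X^{loc}$), but it is not needed for the statement as used in the paper.
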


We recall the definition of simple module (see \cite[Definition 2.3.10]{KS3}). 

\begin{definition}
Let $\Lambda$ be a smooth submanifold of $X$ and let $\cL$ be a coherent $\cA_X$-module supported by $\Lambda$. The module $\cL$ is simple along $\Lambda$ if $\gr_\hbar(\cL)$ is concentrated in degree zero and $\Hn^0(\gr_\hbar \cL)$ is an invertible $\cO_\Lambda$-module.
\end{definition}

When the associated Poisson structure of $\cA_X$ is symplectic, we have the following notions and results.
\begin{lemma}[{\cite[Lemma 6.2.1]{KS3}}] \label{lem:locuni}
Assume that the Poisson structure associated with $\cA_X$ is symplectic. Let $\Lambda$ be a smooth Lagrangian submanifold of $X$ and let $\cL_i$ $(i=0, \; 1)$ be simple $\cA_X$-modules along $\Lambda$. Then:
\begin{enumerate}[(i)]
\item the simple modules $\cL_0$ and $\cL_1$ are locally isomorphic,

\item the natural morphism $\C^\hbar_X \to \fHom_{\cA_X}(\cL_0, \cL_0)$ is an isomorphism.
\end{enumerate}
\end{lemma}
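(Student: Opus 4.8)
The plan is to prove Lemma \ref{lem:locuni} by reducing to a purely local computation in the model case $X = T^\ast\C^n$ with the Moyal-type star-product of Example \ref{ex:Moyal}, where simple modules along a smooth Lagrangian admit an explicit normal form. First I would invoke the symplectic assumption to normalize the situation: since $\Lambda$ is a smooth Lagrangian submanifold of the symplectic manifold $X$, by the (holomorphic) Darboux theorem together with the uniqueness up to isomorphism of the quantization of a symplectic manifold, I may work locally and assume $X = T^\ast\C^n$, $\cA_X = (\cO_X^\hbar,\star)$ is the canonical star-algebra, and $\Lambda$ is the zero section $\{u = 0\}$. This uses the fact stated implicitly around Proposition \ref{prop:makestaralg} that a DQ-algebra is locally isomorphic to a star-algebra, plus the local invariance of the symplectic normal form.

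The core of the argument is then: in this model case there is a \emph{distinguished} simple module along $\Lambda$, namely $\cN := \cA_X / \sum_i \cA_X \cdot u_i$ (the ``quantized structure sheaf of the zero section''), and one checks directly that $\gr_\hbar(\cN)$ is concentrated in degree zero with $\Hn^0(\gr_\hbar\cN) \simeq \cO_\Lambda$, and that $\fHom_{\cA_X}(\cN,\cN) \simeq \C_X^\hbar$ by computing with the explicit star-product (an operator commuting with right multiplication by all $u_i$ and preserving the ideal is determined by where it sends $1$, and ends up being multiplication by a central element). Next, given an arbitrary simple module $\cL$ along $\Lambda$, I would produce a local isomorphism $\cL \simeq \cN$: since $\Hn^0(\gr_\hbar\cL)$ is an invertible $\cO_\Lambda$-module, it is locally free of rank one, so locally one can pick $s \in \cL$ whose image generates $\gr_\hbar(\cL)$; the induced map $\cA_X \to \cL$, $a \mapsto a\cdot s$, is then surjective after applying $\gr_\hbar$ (hence surjective, by a standard $\hbar$-adic Nakayama argument using completeness and coherence), and one shows its kernel is exactly $\sum_i\cA_X u_i$ by again passing to $\gr_\hbar$ and using that $\Lambda$ is defined by $u_1 = \cdots = u_n = 0$. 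This gives $\cL \simeq \cN$ locally, and combining two such isomorphisms yields (i). For (ii), transport the computation $\fHom_{\cA_X}(\cN,\cN)\simeq\C_X^\hbar$ through the isomorphism $\cL_0 \simeq \cN$; since the statement ``the natural map $\C_X^\hbar \to \fHom_{\cA_X}(\cL_0,\cL_0)$ is an isomorphism'' is local, this suffices.

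The step I expect to be the main obstacle is the kernel computation, i.e.\ showing that the surjection $\cA_X \twoheadrightarrow \cL$ attached to a generator $s$ has kernel precisely the left ideal $\cI := \sum_i \cA_X u_i$ (equivalently, that $\cL$ is cyclic with the expected relations, not merely cyclic). The inclusion $\cI \subseteq \Ker$ is not automatic — one must choose the generator $s$ well, exploiting that $\gr_\hbar(\cL)$ is the structure sheaf of the zero section to arrange that $u_i\cdot s \in \hbar\cL$ and then iterate/correct $s$ by lower-order terms to kill these, using $\hbar$-adic completeness and coherence of $\cL$ to guarantee convergence of the correction. Once $\cI\subseteq\Ker$ is secured, one gets an induced surjection $\cN = \cA_X/\cI \twoheadrightarrow \cL$, and since both sides become $\cO_\Lambda$ after $\gr_\hbar$ this map is an isomorphism mod $\hbar$, hence an isomorphism by another application of Nakayama-type completeness. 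Everything else — the explicit endomorphism computation for $\cN$, and the reduction to the model — is routine given the star-product formulas and Theorem \ref{thm:annulationcohoco} for the vanishing of higher cohomology on Stein opens.
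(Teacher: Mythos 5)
First, note that the paper itself gives no proof of this lemma: it is quoted verbatim from \cite[Lemma 6.2.1]{KS3}, so there is no internal argument to compare against. On its own terms, your strategy --- reduce to the Moyal model on $T^\ast\C^n$ with $\Lambda$ the zero section, exhibit the distinguished simple module $\cN=\cA_X/\sum_i\cA_X u_i$, compute $\fHom_{\cA_X}(\cN,\cN)\simeq\C^\hbar$, and show that any simple module is locally cyclic with the same relations --- is the standard route, and the reduction, the endomorphism computation, and the final Nakayama step (kernel of $\cN\twoheadrightarrow\cL$ vanishes because it vanishes mod $\hbar$) are all sound.

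The gap sits exactly where you predicted, but it is more serious than a technical nuisance: the correction of the generator cannot be achieved ``by lower-order terms'' plus $\hbar$-adic completeness. Starting from $s$ with $\bar s$ generating $\cL/\hbar\cL$, one has $u_i\cdot s=\hbar v_i$; but for \emph{every} $w\in\cL$ one also has $u_i\cdot w\in\hbar\cL$ (write $w=c\cdot s$ and observe that $\sigma_0(u_i\star c)$ vanishes on $\Lambda$, hence acts by zero on the $\cO_\Lambda$-module $\cL/\hbar\cL$). Consequently replacing $s$ by $s+\hbar w$, or by $c\cdot s$ with $c\equiv 1\bmod\hbar$, changes $v_i$ only modulo $\hbar\cL$ and can never kill the obstruction class $\bar v_i\in\cL/\hbar\cL$; the iteration you describe stalls at the first step. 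What actually works is to replace $s$ by $c\cdot s$ with $c$ a \emph{unit} whose symbol $c_0$ solves the linear system $\partial_{x_i}c_0+g_ic_0=0$, where $\bar v_i=g_i\bar s$ (this uses $[u_i,c]_\star=\hbar\partial_{x_i}c$); the integrability condition $\partial_{x_j}g_i=\partial_{x_i}g_j$ comes from $[u_i,u_j]_\star=0$, and local solvability is the holomorphic Poincar\'e lemma. The same differential equation must be solved at each order in $\hbar$. This analytic input --- locally integrating a flat connection on $\gr_\hbar\cL$ --- is the real content of the lemma and the reason the conclusion is only \emph{local}: a purely formal Nakayama/completeness argument of the kind you sketch would, whenever $\gr_\hbar\cL$ is globally trivial, produce a global isomorphism $\cL\simeq\cN$, which is false in general (e.g.\ the modules $\cW_X(0)/\cW_X(0)(u-\hbar\lambda x^{-1})$ on $T^\ast\C^\ast$ are pairwise non-isomorphic simple modules along the zero section). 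Once this step is repaired, the rest of your argument goes through.
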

\begin{definition}
\begin{enumerate}[(i)]
\item an $\cA_X$-module $\cM$ has no $\hbar$-torsion if the map $\cM \stackrel{\hbar}{\to} \cM$ is a monomorphism.

\item An $\cA_X^{loc}$-module $\cM$ is holonomic if it is coherent and if its support is a Lagrangian subvariety of $X$.

\item An $\cA_X$-module $\cN$ is holonomic if it is coherent, without $\hbar$-torsion and $\cN^{loc}$ is a holonomic $\cA_X^{loc}$-module.
\end{enumerate}
\end{definition}

We denote by $\Der^b_{\C \mathrm{c}}(\C^{\hbar,loc}_X)$ the full subcategory of $\Der^{b}(\C^{\hbar,loc}_X)$ consisting of $\C$-constructible objects (see \cite[Definition 8.5.6 (ii)]{KS1}). There is the following important result concerning holonomic $\cA_X^{loc}$-modules.

\begin{theorem}[{\cite[Theorem 7.2.3]{KS3}}] Let $X$ be a complex symplectic manifold of dimension $d_X$ and let $\cM$ and $\cL$ be two holonomic $\cA_X^{loc}$-modules. Then,
\begin{enumerate}[(i)]
\item the object $\fRHom_{\cA_X^{loc}}(\cM,\cL)$ belongs to $\Der^b_{\C \mathrm{c}}(\C^{\hbar,loc}_X)$,
\item there is a canonical isomorphism:
\begin{equation*}
\fRHom_{\cA_X^{loc}}(\cM,\cL) \stackrel{\sim}{\to} \fRHom_{\C_X^{\hbar,loc}}(\fRHom_{\cA_X^{loc}}(\cL,\cM),\C_X^{\hbar,loc})[d_X],
\end{equation*} 
\item the object $\fRHom_{\cA_X^{loc}}(\cM,\cL)[d_X/2]$ is perverse.
\end{enumerate} 
\end{theorem}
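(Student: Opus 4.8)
The plan is to reduce the global statement to a local computation and then invoke the classical theory of $\C$-constructibility and perversity for holonomic $\cD$-modules (or microdifferential modules). First I would observe that all three assertions are local on $X$, so after shrinking we may assume $\cA_X$ is a star-algebra on an open subset of a cotangent bundle $T^\ast M$, and in fact — since the support of a holonomic module is Lagrangian and hence, locally away from a nowhere-dense closed subset, a smooth conic Lagrangian — we may use the contact/homogeneous structure. The key technical input is a \emph{regularity} or \emph{finiteness} statement: a holonomic $\cA_X^{loc}$-module, being supported on a Lagrangian subvariety, admits locally a finite free resolution over $\cA_X^{loc}$ with good behaviour of the characteristic variety under the $\RHom$ functor. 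Concretely, for the first point one shows that the cohomology sheaves of $\fRHom_{\cA_X^{loc}}(\cM,\cL)$ are $\C$-constructible by a dévissage: using the local structure of holonomic modules and the fact that $\Ext$-sheaves between holonomic $\cA^{loc}$-modules can be computed, after a quantized contact transformation, in terms of microdifferential or ordinary holonomic $\cD$-module $\Ext$-sheaves, for which $\C$-constructibility is Kashiwara's theorem. The estimate on the support of the $\Ext$-sheaves (that they are supported in the projection of $\Supp(\cM)\cap\Supp(\cL)$, which is subanalytic) together with a stratification argument gives local constancy on strata.

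For point (ii), the biduality isomorphism, the approach is to construct the morphism functorially from the natural pairing
\[
\fRHom_{\cA_X^{loc}}(\cM,\cL)\otimes \fRHom_{\cA_X^{loc}}(\cL,\cM)\to \fRHom_{\cA_X^{loc}}(\cM,\cM)\to \C_X^{\hbar,loc}[d_X],
\]
where the last arrow is a \emph{trace} map; the shift by $d_X$ comes from the fact that a simple module along a Lagrangian behaves like a ``half-density'' object and $\fRHom_{\cA_X^{loc}}(\cL,\cL)\simeq\C_X^{\hbar,loc}$ by Lemma \ref{lem:locuni}(ii), while the dualizing complex of $X$ (a complex manifold of complex dimension $d_X$, hence real dimension $2d_X$) contributes the degree. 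To check this map is an isomorphism one again localizes: by Lemma \ref{lem:locuni}(i) any two simple modules along the same smooth Lagrangian are locally isomorphic, so one reduces to the case where $\cM$ and $\cL$ are simple along smooth Lagrangians $\Lambda_\cM,\Lambda_\cL$ in general position, and then the computation is the local duality statement for (micro)differential modules, i.e. the self-duality of the de Rham / solution complex, which is classical. The general holonomic case follows by dévissage along the filtration by supports, using that the construction is compatible with distinguished triangles.

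Point (iii) is then essentially formal given (i) and (ii): perversity of $\fRHom_{\cA_X^{loc}}(\cM,\cL)[d_X/2]$ is a condition on the dimensions of the supports of its cohomology sheaves and those of its dual. By (i) the object lies in $\Der^b_{\C\mathrm c}(\C_X^{\hbar,loc})$, and by (ii) its Verdier dual (up to the shift $[d_X]$ and twist by the orientation sheaf, which is trivial here since $X$ is a complex manifold) is $\fRHom_{\cA_X^{loc}}(\cL,\cM)[2d_X]$, i.e. the ``same kind'' of object with $\cM$ and $\cL$ swapped. Thus the two support conditions defining the perverse $t$-structure become equivalent, and it suffices to verify the support condition $\dim\Supp\bigl(\Hn^j(\fRHom_{\cA_X^{loc}}(\cM,\cL))\bigr)\le d_X-j$ for the middle perversity on the $2d_X$-real-dimensional manifold $X$; this in turn follows from the estimate that these $\Ext$-sheaves are supported on $\Supp(\cM)\cap\Supp(\cL)$, which is isotropic when $\cM,\cL$ are holonomic, hence of dimension $\le d_X$, together with the standard vanishing $\fExt^j_{\cA_X^{loc}}=0$ for $j<0$ and the codimension estimate for higher $\Ext$'s coming from the local free resolutions.

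The main obstacle, and the genuinely substantial part, is point (ii): producing the trace morphism $\fRHom_{\cA_X^{loc}}(\cM,\cM)\to\C_X^{\hbar,loc}[d_X]$ in a coordinate-independent way and proving it induces the biduality isomorphism. This requires either a local duality theory for $\cA_X$-modules parallel to Serre–Grothendieck duality (construction of a dualizing complex $\w_{\cA_X}$, proof that $\w_{\cA_X}\simeq\cA_{X^a}[d_X]$ appropriately twisted, and the adjunction $\RHom$ against it), or a reduction via quantized contact transformations to the microdifferential case where Sato–Kawai–Kashiwara's microlocal duality is available. Making the $\hbar$-localized, non-conic bookkeeping match the conic microdifferential bookkeeping — in particular tracking the half-density twists so that the shift is exactly $d_X$ and no spurious line bundle appears — is where the care is needed; everything else is dévissage and citation of Kashiwara's constructibility theorem.
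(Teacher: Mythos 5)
The paper does not prove this theorem: it is recalled verbatim from \cite[Theorem 7.2.3]{KS3} and used as a black box, so there is no internal proof to compare your sketch against. Judged on its own, your outline does identify the ingredients of the actual Kashiwara--Schapira argument: locality, reduction of the case of a simple module along a smooth Lagrangian to the (micro)differential setting by a quantized contact transformation, Kashiwara's constructibility theorem as the classical input, a trace pairing producing the duality morphism, d\'evissage to the general holonomic case, and the formal deduction of (iii) from (i) and (ii). But what you present is a plan rather than a proof: the genuinely hard steps --- the finiteness of $\fRHom_{\cA_X^{loc}}(\cM,\cL)$ as a complex of $\C_X^{\hbar,loc}$-modules (which in \cite{KS3} rests on the machinery of cohomologically complete modules and a deformation of the Lagrangians to generic position, not merely on a local finite free resolution) and the coordinate-free construction of the trace morphism --- are exactly the ones you defer, as you yourself acknowledge.

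One concrete slip in (iii): the cohomology sheaves of $F=\fRHom_{\cA_X^{loc}}(\cM,\cL)$ are supported in $\Supp\cM\cap\Supp\cL$, which lies in a Lagrangian and hence has complex dimension at most $d_X/2$ (real dimension $d_X$). The middle-perversity support condition for $F[d_X/2]$ is $\dim_{\C}\Supp\bigl(\Hn^{j}(F)\bigr)\le d_X/2-j$, equivalently $\dim_{\mathbb{R}}\le d_X-2j$, not the bound $d_X-j$ that you state. At $j=0$ the two agree, which is presumably why the error is easy to miss, but for $j\ge 1$ the inequality you propose to verify is strictly weaker than what perversity requires. This does not affect the architecture of the argument, only the bookkeeping, but it would need to be corrected in any written-out version.
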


\section{Quantization of spectral curves}

In this section, we prove that it is possible to quantize, in the framework of DQ-modules, the spectral curve associated to a Higgs bundle on a Riemann surface of genus at least 2.

If $E$ is a vector bundle on a complex manifold $X$ and $s\colon X \to E$ is a section of $E$, we denote be $X_s$ the analytic zero subscheme of $s$ and by $\Zl(s)$ the set theoretic zero locus of $s$.

By a Riemann surface we mean a connected, one dimensional complex manifold. We do not assume that Riemann surfaces are compact.

In all this section $(\Sigma, \cO_\Sigma)$ is a Riemann surface and we set $X=T^\ast \Sigma$ and $\pi:X \to \Sigma$ the canonical projection on the base. Let $\cL$ be a line bundle over $\Sigma$. We set
\begin{equation*}
\chW_X^{\cL}(0):= \chW_X(0) \te_{\pi^{-1}\cO_\Sigma} \pi^{-1} \cL.
\end{equation*}
Notice that the sheaf $\chW_X^{\cL}(0)$ is a $\chW_X(0)$-module without $\hbar$-torsion. It is $\hbar$-complete and coherent. Recall that we have the following exact sequence
\begin{equation}\label{seq:init}
0 \to  \hbar \chW_X(0) \to \chW_X(0) \to \cO_X \to 0
\end{equation}
where $\hbar \chW_X(0)$ denotes the image of the morphism $\chW_X(0) \stackrel{\hbar}{\to} \chW_X(0)$.

Applying the exact functor $(\cdot) \te_{\pi^{-1}\cO_\Sigma} \pi^{-1} \cL$ to the sequence \eqref{seq:init}, we get
\begin{equation}\label{seq:Ltwist}
0 \to  \hbar \chW^{\cL}_X(0) \to \chW^{\cL}_X(0) \stackrel{\sigma_0}{\to} \pi^\ast {\cL} \to 0.
\end{equation}
It follows from the above sequence that $\chW^{\cL}_X(0) / \hbar \chW^{\cL}_X(0) \simeq \pi^\ast {\cL}$.

We state and prove the following quantization criterion, that we will apply repeatedly.
\begin{proposition}\label{prop:quantization}
Let $\Sigma$ be a Riemann surface and let $\cL$ be a line bundle on $\Sigma$ and $s \neq 0$ be a section of $\pi^\ast \cL$. Assume that $\Hn^1(X,\chW_X^{\cL}(0))=0$. Then, there exists a coherent $\chW_X(0)$-module $\cM$ without $\hbar$-torsion supported by $\Zl(s)$ and such that $\cM / \hbar \cM \simeq \pi^\ast \cL /(s)$ where $(s)$ denotes the $\cO_X$-submodule of $\pi^\ast \cL$ generated by $s$. 
\end{proposition}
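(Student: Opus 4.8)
The plan is to lift the section $s$ of $\pi^\ast\cL$ to a section of $\chW_X^\cL(0)$, use it to build a map of $\chW_X(0)$-modules, and take the cokernel; the cohomological hypothesis is exactly what is needed to perform the lift. First I would use the surjection $\sigma_0\colon\chW_X^\cL(0)\twoheadrightarrow\pi^\ast\cL$ from the exact sequence \eqref{seq:Ltwist}. Taking global sections and using the long exact cohomology sequence associated to \eqref{seq:Ltwist}, the hypothesis $\Hn^1(X,\chW_X^\cL(0))=0$ (note $\hbar\chW_X^\cL(0)\simeq\chW_X^\cL(0)$ as sheaves, since $\chW_X^\cL(0)$ has no $\hbar$-torsion) shows that $\Hn^0(X,\chW_X^\cL(0))\to\Hn^0(X,\pi^\ast\cL)$ is surjective, so there is a global section $\widetilde{s}\in\Hn^0(X,\chW_X^\cL(0))$ with $\sigma_0(\widetilde{s})=s$.

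Next, right multiplication by $\widetilde{s}$ does not quite make sense since $\chW_X^\cL(0)$ is only a left $\chW_X(0)$-module, so instead I would produce the morphism at the level of the rank-one free module: consider the $\chW_X(0)$-linear map
\begin{equation*}
\varphi\colon \chW_X(0)\longrightarrow \chW_X^\cL(0)
\end{equation*}
determined locally, after trivializing $\cL$, by $1\mapsto \widetilde{s}$ acting on the right through the star-product (equivalently, using a local frame $e$ of $\cL$ one writes $\widetilde{s}=\sum_{k\ge0}s_k\hbar^k\cdot e$ with $s_0\ne 0$ where it does not vanish, and defines $\varphi(a)=(a\star \sum_k s_k\hbar^k)\otimes e$; these glue because $s$ is globally defined). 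Set $\cM:=\operatorname{coker}\varphi$, a coherent $\chW_X(0)$-module. Applying $\gr_\hbar$ and using that $\gr_\hbar\chW_X(0)\simeq\cO_X$ and $\gr_\hbar\chW_X^\cL(0)\simeq\pi^\ast\cL$, the map $\gr_\hbar\varphi$ becomes multiplication by $\sigma_0(\widetilde{s})=s$; hence from the right-exactness of $\gr_\hbar$ (or the six-term sequence) one gets $\cM/\hbar\cM\simeq\Hn^0(\gr_\hbar\cM)\simeq\pi^\ast\cL/(s)$. Since $\pi^\ast\cL/(s)$ is supported on $\Zl(s)$ and $\cM$ is $\hbar$-complete (being a cokernel of a map of $\hbar$-complete coherent modules, after checking coherence of the cokernel), $\Supp\cM\subseteq\Zl(s)$; conversely $\cM/\hbar\cM$ surjects onto a sheaf with support $\Zl(s)$, so $\Supp\cM=\Zl(s)$.

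It remains to check that $\cM$ has no $\hbar$-torsion. This is the delicate point. The key observation is that $\varphi$ is injective: locally $\varphi$ is right multiplication by an element $\sum_k s_k\hbar^k$ of $\chW_X(0)$ whose image in $\cO_X$ is $s_0$, which is a nonzero section of the structure sheaf of a (reduced, irreducible near generic points) curve complement, hence a non-zero-divisor in $\cO_X$ there; standard $\hbar$-adic arguments (an element of $\chW_X(0)$ with non-zero-divisor symbol is a non-zero-divisor, proved by looking at lowest $\hbar$-order terms and using $\hbar$-completeness) then give injectivity of $\varphi$. Thus
\begin{equation*}
0\longrightarrow \chW_X(0)\stackrel{\varphi}{\longrightarrow}\chW_X^\cL(0)\longrightarrow\cM\longrightarrow 0
\end{equation*}
is exact. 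Applying $\gr_\hbar$ and writing out the long exact sequence, the connecting map $\Tor_1^{\C^\hbar}(\cO_X,\cM)\to\cO_X$ — which measures precisely the $\hbar$-torsion of $\cM$, via $\Tor_1^{\C^\hbar}(\cO_X,\cM)\simeq\cM[\hbar]$ — is identified with multiplication by $s_0$ on $\cO_X$, which is injective away from the nowhere-dense set where $s_0$ vanishes identically; a separate (local) argument using that $\chW_X(0)$ is Noetherian and $\hbar$-torsion of a coherent module is coherent, together with the fact that it vanishes on a dense open set, forces it to vanish. Hence $\cM$ is $\hbar$-torsion-free, completing the proof. The main obstacle is this last step: one must argue carefully that the cokernel inherits $\hbar$-torsion-freeness, and the cleanest route is the injectivity of $\varphi$, which reduces everything to the fact that the leading symbol $s_0$ of the lift is a non-zero-divisor.
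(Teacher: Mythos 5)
Your construction coincides with the paper's: lift $s$ to $s^\hbar\in\Gamma(X;\chW_X^{\cL}(0))$ using the vanishing of $\Hn^1$, set $\cM=\chW_X^{\cL}(0)/\chW_X(0)s^\hbar$, and identify $\cM/\hbar\cM$ and the support as you do; the paper disposes of $\hbar$-torsion-freeness in one line ("since $\sigma_0(s^\hbar)\neq 0$"), which your Tor computation merely expands. One correction to that expansion: since $X$ is connected and $s\neq 0$, the germ of $s$ at \emph{every} point is a nonzero element of the integral domain $\cO_{X,x}$, so multiplication by $s$ is injective on all of $\cO_X$, not merely off its zero locus; hence the torsion, which your exact sequence embeds into $\ker(s\colon\cO_X\to\pi^\ast\cL)$, vanishes outright. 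Your fallback "separate local argument" is therefore unnecessary, and as stated ("coherent and vanishing on a dense open set, hence zero") it is not a valid principle in general (skyscrapers are coherent); it happens to be repairable here only because the torsion sits inside $\cO_X$.
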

\begin{proof}
Using the long exact sequence for the functor $\Gamma(X; \cdot)$ applied to the exact sequence \eqref{seq:Ltwist}, we obtain the exact sequence
\begin{equation*}
 \Hn^0(X,\chW^{\cL}_X(0)) \stackrel{\sigma_{0}}{\to} \Hn^0(X,\pi^\ast {\cL}) \to \Hn^1(X,\hbar \chW_X^{\cL}(0)).
\end{equation*}
The morphism $\chW_X^{\cL}(0) \stackrel{\hbar}{\to} \hbar \chW_X^{\cL}(0)$ is an isomorphism of $\C^\hbar$-modules and the group $\Hn^1(X,\chW_X^{\cL}(0))=0$. Then, $\Hn^1(X,\hbar \chW_X^{\cL}(0))=0$. This implies that the morphism $\sigma_0$ is an epimorphism. It follows that we can find $s^\hbar \in \chW^{\cL}_X(0)(X)$ such that $\sigma_0(s^\hbar)=s$. We denote by $(s^\hbar)$ the left $\chW_X(0)$-submodule of $\chW^{\cL}_X(0)$ generated by $s^\hbar$ and set
\begin{equation*}
\cM:=\chW_X^{\cL}(0) / (s^\hbar).
\end{equation*}
The module $\cM$ has no $\hbar$-torsion since $\sigma_0 (s^\hbar) \neq 0$ and is coherent since it is a quotient of coherent modules. Finally, since $\cM$ has no $\hbar$-torsion
\begin{equation*}
\Supp(\cM)= \Supp(\cM / \hbar \cM)=\Supp(\pi^\ast \cL / (s))=\Zl(s).
\end{equation*}
\end{proof} 

\begin{remark} There is a straightforward way to quantize the zero locus of a holomorphic function in complex Poisson manifold quantized by a star-algebra. Indeed, if $X$ is a complex Poisson manifold endowed with a star-algebra $(\cO_X^\hbar,\star)$ (as for instance in examples \ref{ex:Moyal}, \ref{ex:expexp} and \ref{ex:affexp}). Then, there exist a $\C$-linear section $\phi: \cO_X \to \cO_X^\hbar$ of $\sigma_0 \colon \cO_X^\hbar \to \cO_X$. Thus, if $f \in \cO_X$,  the $(\cO_X^\hbar,\star)$-module $\cO_X^\hbar / \cO_X^\hbar \phi(f)$ is a coherent $(\cO_X^\hbar,\star)$-module without $\hbar$-torsion supported by $\lbrace x \in X | f(x)=0 \rbrace$.
\end{remark}

\subsection{The case of open Riemann surfaces}

\begin{theorem}\cite{BeS1,BeS2}
An open Riemann surface is a Stein manifold.
\end{theorem}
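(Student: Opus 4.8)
The statement to prove is: \emph{An open Riemann surface is a Stein manifold.} This is a classical theorem (Behnke--Stein), and the expected proof in context is a citation rather than a from-scratch argument — but here is the plan one would follow if one wanted to reconstruct it.

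\textbf{Overall approach.} The plan is to verify the defining properties of a Stein manifold for an arbitrary non-compact connected one-dimensional complex manifold $\Sigma$: namely that $\Sigma$ is holomorphically convex, holomorphically separable (global holomorphic functions separate points), and that at each point there exist global holomorphic functions giving a local coordinate system. In complex dimension one the last condition is automatic once one has enough global functions, so the crux is producing a rich supply of global holomorphic functions on $\Sigma$. The standard route is via exhaustion: write $\Sigma$ as an increasing union of relatively compact open subsets $\Sigma_1 \Subset \Sigma_2 \Subset \cdots$ with $\bigcup_n \Sigma_n = \Sigma$, arranged so that each $\Sigma_n$ is a \emph{Runge} domain in $\Sigma_{n+1}$ (no relatively compact connected component of $\Sigma_{n+1}\setminus \overline{\Sigma_n}$), and then solve approximation and interpolation problems step by step and pass to a limit.

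\textbf{Key steps.} First I would establish a Runge-type approximation theorem on $\Sigma$: if $K \subset \Sigma$ is compact and $\Sigma \setminus K$ has no relatively compact connected components, then any function holomorphic on a neighborhood of $K$ can be uniformly approximated on $K$ by global holomorphic functions on $\Sigma$. On an open Riemann surface this rests on the vanishing of the relevant first cohomology — concretely, one shows $H^1(\Sigma, \cO_\Sigma) = 0$ for $\Sigma$ open, which follows from the solvability of the inhomogeneous Cauchy--Riemann equation $\bar\partial u = f$ for any smooth $(0,1)$-form $f$ on a non-compact Riemann surface (Behnke--Stein, or Malgrange's theorem). Second, using this approximation property together with a Mittag--Leffler / cohomological argument, I would construct, for any discrete sequence of points $p_k \to \infty$ in $\Sigma$ and any prescribed jet data, a global holomorphic function realizing that data; in particular one gets global functions separating any two points, giving local coordinates, and — by choosing the data to grow — a global holomorphic function whose sublevel sets are all relatively compact, which yields holomorphic convexity. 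Third, I would assemble these into the conclusion: holomorphic separability plus the existence of local holomorphic coordinates plus holomorphic convexity is precisely the definition of a Stein manifold.

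\textbf{Main obstacle.} The technical heart — and the step I expect to be hardest — is the global solvability of $\bar\partial$ (equivalently $H^1(\Sigma,\cO_\Sigma)=0$) on an \emph{arbitrary} non-compact Riemann surface, without any a priori Stein or finiteness hypothesis; this is where the non-compactness is used essentially and where the Behnke--Stein argument (via exhaustion by relatively compact Runge subdomains and careful uniform limits) does its real work. Everything downstream — Runge approximation, Mittag--Leffler interpolation, holomorphic convexity — is then a relatively formal consequence. Given that this is a standard and well-documented result, in the present paper it is legitimate to simply invoke it with the citation \cite{BeS1,BeS2} and move on; the sketch above indicates the structure of a self-contained proof for the reader who wants one.
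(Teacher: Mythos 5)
The paper offers no proof of this statement — it is the classical Behnke--Stein theorem, quoted with the citation \cite{BeS1,BeS2} exactly as you anticipate — so your decision to treat it as a citation is precisely what the paper does. Your accompanying sketch (exhaustion by Runge subdomains, Runge approximation via vanishing of $\Hn^1(\Sigma,\cO_\Sigma)$, then separability, local coordinates and holomorphic convexity) is an accurate outline of the standard argument and raises no concerns.
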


The following proposition is well-known to experts (see for instance \cite{Forstneric})

\begin{proposition}
If $E \to M$ is a holomorphic vector bundle over a Stein base $M$ then the total space $E$ is also Stein.
\end{proposition}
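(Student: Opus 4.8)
The statement to prove is: if $E \to M$ is a holomorphic vector bundle over a Stein base $M$, then the total space $E$ is Stein. The plan is to use the standard characterization of Stein manifolds via the existence of a strictly plurisubharmonic exhaustion function (Grauert's solution to the Levi problem), and to build such a function on $E$ from one on $M$ together with a Hermitian metric on the fibres.

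First I would fix a smooth strictly plurisubharmonic exhaustion function $\varphi \colon M \to \R$, which exists because $M$ is Stein (Grauert's criterion). Next, I would equip $E$ with a smooth Hermitian metric $h$ on its fibres, giving a smooth function $\psi \colon E \to \R_{\geq 0}$, $\psi(v) = h(v,v)$, which is nonnegative and restricts to a strictly plurisubharmonic (indeed, a positive-definite Hermitian norm squared) function on each fibre. The natural candidate for an exhaustion function on $E$ is then $\Phi := \pi^\ast \varphi + \psi$, where $\pi \colon E \to M$ is the projection. I would check that $\Phi$ is an exhaustion: a subset of $E$ on which $\Phi$ is bounded projects into a compact subset $K$ of $M$ (since $\pi^\ast \varphi$ is bounded there and $\psi \geq 0$), and over $K$ the condition that $\psi$ be bounded confines the set to a bounded region of each fibre, using a trivialization over a finite cover of $K$; hence the sublevel sets of $\Phi$ are relatively compact.

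The core analytic point is that $\Phi$ is strictly plurisubharmonic on $E$. Here I would argue locally: over a trivializing open set $U \subset M$ with $E|_U \cong U \times \C^r$, in coordinates $(z,w)$ the function $\pi^\ast\varphi$ depends only on $z$ and is strictly psh in $z$ there, while $\psi$ in these coordinates has the form $\sum_{i,j} \overline{w_i}\, a_{ij}(z)\, w_j$ for a positive-definite Hermitian matrix $a_{ij}(z)$ depending smoothly on $z$. The Levi form of $\psi$ is positive definite in the $w$-directions; it may fail to be positive in mixed or $z$-directions, but this defect involves at most first-order vanishing in $w$, so on a neighbourhood of the zero section it is controlled by adding the strictly positive Levi form of $\pi^\ast\varphi$ in the $z$-directions, and away from the zero section one rescales. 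The clean way to make this uniform is to replace $\varphi$ by $\chi \circ \varphi$ for a sufficiently rapidly increasing convex function $\chi$: since $\chi \circ \varphi$ remains a strictly psh exhaustion for any smooth increasing convex $\chi$, and its Levi form grows like $\chi''(\varphi)(\partial\varphi \wedge \bar\partial \varphi) + \chi'(\varphi)(\text{Levi form of }\varphi)$, one can choose $\chi$ so that on each sublevel set the positive contributions dominate the mixed terms coming from $\psi$. This patching of the local estimates into a global strictly psh function is the step I expect to be the main obstacle, since it requires a careful exhaustion-by-compacts argument to choose $\chi$; everything else is formal.

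Finally, having produced a smooth strictly plurisubharmonic exhaustion function on $E$, I would invoke Grauert's theorem (the solution of the Levi problem) to conclude that $E$ is Stein, which completes the proof. I would remark that an alternative, more algebraic route is available: embed $M$ properly holomorphically into some $\C^N$, pull $E$ back along this embedding, and note that $E$ is a holomorphic vector bundle over a closed submanifold of $\C^N$, hence (by extending it, or directly) a closed submanifold of the total space of a trivial bundle over a Stein manifold; but the exhaustion-function argument above is the most self-contained.
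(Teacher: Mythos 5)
The paper does not actually prove this proposition: it states it as well known and refers to Forstneri\v{c}'s book, so there is no in-paper argument to compare with and I can only assess your proof on its own terms.

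The step you yourself flag as ``the main obstacle'' is where the argument genuinely breaks, and the fix you propose (precomposing $\varphi$ with a rapidly increasing convex $\chi$) cannot repair it. For an arbitrary smooth Hermitian metric, write $\psi(z,w)=\sum_{i,j}a_{i\bar j}(z)\,w_i\bar w_j$ in a trivialization. The Levi form of $\psi$ in the pure base directions is $\sum_{\alpha,\beta,i,j}\partial_{z_\alpha}\partial_{\bar z_\beta}a_{i\bar j}\,w_i\bar w_j\,\xi_\alpha\bar\xi_\beta$, which may be negative and grows like $|w|^2|\xi|^2$ along each (non-compact) fibre. The compensating term, the Levi form of $\chi\circ\varphi\circ\pi$, is constant along each fibre, so the requirement at the point $(z,w)$ is essentially $\chi'(\varphi(z))\gtrsim |w|^2$ for all $w\in E_z$, which no $\chi$ satisfies. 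Concretely, for $M=\C$, $E=\C\times\C$ and $h=e^{-|z|^2}|w|^2$, the Levi form of $\chi(\varphi(z))+h(w,w)$ at $(0,w)$ in a direction $(\xi,0)$ is $\bigl(L(\chi\circ\varphi)(\xi,\xi)-|w|^2|\xi|^2\bigr)$, negative for $|w|$ large. Appealing to ``sublevel sets of $\Phi$'' to bound $|w|$ is circular, since those sets depend on the $\chi$ being chosen and strict plurisubharmonicity is needed at every point of $E$.

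The repair is to use the Stein hypothesis in the choice of $\psi$ rather than taking an arbitrary metric: by Cartan's Theorem A, $E^\ast$ admits a surjection $\cO_M^N\twoheadrightarrow E^\ast$, which dualizes to a subbundle inclusion $E\hookrightarrow M\times\C^N$; thus $E$ is a closed complex submanifold of the Stein manifold $M\times\C^N$ and hence Stein. This is exactly the ``alternative, more algebraic route'' you sketch in your last sentence, and it is the one that actually works (your phrasing via embedding $M$ into $\C^N$ and extending $E$ is an unnecessary detour). If you insist on the exhaustion-function formulation, take $\psi(v)=\sum_{i=1}^N|\langle s_i(\pi(v)),v\rangle|^2$ for global sections $s_i$ of $E^\ast$ spanning every fibre: each summand is the squared modulus of a holomorphic function on $E$, so $\psi$ is plurisubharmonic everywhere and strictly so in the fibre directions, and then $\varphi\circ\pi+\psi$ is a strictly plurisubharmonic exhaustion as you intended.
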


This implies that the total space of the cotangent bundle to an open Riemann surface is Stein.

\begin{lemma}
If $\Sigma$ is an open Riemann surface then, 
\begin{equation*}
\Hn^1(X,\chW_X^{\cL}(0))=0.
 \end{equation*}
\end{lemma}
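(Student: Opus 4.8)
The plan is to reduce the vanishing of $\Hn^1(X,\chW_X^{\cL}(0))$ to the Stein vanishing theorem already recorded as Theorem \ref{thm:annulationcohoco}. The obstacle is that $\chW_X^{\cL}(0)$ is not itself a coherent module over a DQ-algebra with symplectic (or even Poisson) structure in the most naive sense for which that theorem was quoted; rather, $\chW_X(0)$ is a DQ-algebra on $X=T^\ast\Sigma$, and $\chW_X^{\cL}(0)=\chW_X(0)\te_{\pi^{-1}\cO_\Sigma}\pi^{-1}\cL$ is a coherent $\chW_X(0)$-module (this was asserted in the paragraph defining it: it is $\hbar$-complete and coherent). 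So the first step is simply to observe that $\chW_X^{\cL}(0)$ is an object of $\Mod_\coh(\chW_X(0))$.

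Next I would invoke the hypotheses of the preceding two results: $\Sigma$ is an open Riemann surface, hence Stein by the Behnke--Stein theorem; and the total space of a holomorphic vector bundle over a Stein base is Stein, so $X=T^\ast\Sigma$ is Stein. Therefore $X$ is a Stein open subset of itself, and Theorem \ref{thm:annulationcohoco} applies with $U=X$ to the coherent $\chW_X(0)$-module $\chW_X^{\cL}(0)$, giving $\Hn^j(X,\chW_X^{\cL}(0))=0$ for all $j>0$, in particular for $j=1$. That is exactly the claim.

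The only point requiring a little care — and the step I expect to be the genuine (if minor) obstacle — is verifying that Theorem \ref{thm:annulationcohoco} is applicable here, i.e. that $\chW_X^{\cL}(0)$ really is coherent over $\chW_X(0)$. This follows because $\cL$ is locally free of rank one over $\cO_\Sigma$, so locally $\pi^{-1}\cL\simeq\pi^{-1}\cO_\Sigma$ and hence $\chW_X^{\cL}(0)\simeq\chW_X(0)$ locally as a $\chW_X(0)$-module; coherence being a local property, $\chW_X^{\cL}(0)$ is coherent. One should also note that $X$ being Stein means every point has a fundamental system of Stein neighbourhoods and $X$ itself is Stein, so there is no difficulty in taking $U=X$ in the statement of Theorem \ref{thm:annulationcohoco}. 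With coherence and the Stein property in hand, the conclusion is immediate.

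\begin{proof}
Since $\Sigma$ is an open Riemann surface, it is a Stein manifold by the Behnke--Stein theorem, and the total space $X=T^\ast\Sigma$ of its cotangent bundle is then Stein as well. As $\cL$ is a line bundle on $\Sigma$, the sheaf $\pi^{-1}\cL$ is locally isomorphic to $\pi^{-1}\cO_\Sigma$, so $\chW_X^{\cL}(0)=\chW_X(0)\te_{\pi^{-1}\cO_\Sigma}\pi^{-1}\cL$ is locally isomorphic to $\chW_X(0)$ as a $\chW_X(0)$-module; being coherent is a local condition, hence $\chW_X^{\cL}(0)$ is a coherent $\chW_X(0)$-module. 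Applying Theorem \ref{thm:annulationcohoco} to the coherent $\chW_X(0)$-module $\chW_X^{\cL}(0)$ and to the Stein open subset $U=X$ of $X$, we obtain $\Hn^j(X,\chW_X^{\cL}(0))=0$ for every $j>0$. In particular $\Hn^1(X,\chW_X^{\cL}(0))=0$.
\end{proof}
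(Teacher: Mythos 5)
Your proof is correct and follows exactly the paper's argument: $X=T^\ast\Sigma$ is Stein by Behnke--Stein together with the Stein-total-space result, $\chW_X^{\cL}(0)$ is a coherent $\chW_X(0)$-module, and Theorem \ref{thm:annulationcohoco} gives the vanishing. The extra verification of coherence via local triviality of $\cL$ is a harmless elaboration of what the paper takes as already established.
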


\begin{proof}
Since $X$ is Stein and $\chW_X^{\cL}(0)$ is a coherent $\chW_X(0)$-module, the result follows from Theorem \ref{thm:annulationcohoco}.
\end{proof}

\subsection{The case of compact Riemann surfaces}\label{subsec:compact}

In this subsection we give a criterion for the vanishing of the cohomology of $\chW_X^{\cL}(0)$ when $\Sigma$ is compact and use it to quantize spectral curves associated to Higgs bundle.

We will need the following classical lemma due to Grothendieck. 

\begin{lemma}\label{lem:comtensor}
Let $F^\bullet$ be a bounded complex of nuclear Fr\'echet spaces such that for every $i \in \Z$, $\Hn^i(F^\bullet)$ is a finite dimensional vector space. Let $V$ be a Fr\'echet nuclear complex vector space. Then, for every $i \in \Z$,
\begin{equation*}
\Hn^i(F^\bullet \hat{\otimes} V) \simeq \Hn^i(F^\bullet)\otimes V.
\end{equation*}
\end{lemma}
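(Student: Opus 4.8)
The plan is to reduce the statement to the standard structure theory of coherent sheaves on compact complex manifolds together with the topological algebra version of the projection/flatness of completed tensor products. The point is that $\chW_X^{\cL}(0)$ is an $\hbar$-adically complete sheaf whose associated graded pieces are coherent $\cO_X$-modules (in fact $\cO_X$-modules of the form $\cO_X\otimes_{\pi^{-1}\cO_\Sigma}\pi^{-1}\cL$ on each level of the $\hbar$-filtration), so computing its cohomology should amount to computing the cohomology of the graded pieces and then assembling. Concretely, I would not prove Lemma~\ref{lem:comtensor} itself from scratch — it is attributed to Grothendieck — but rather set up the functional-analytic framework in which it can be applied: identify a bounded complex $F^\bullet$ of nuclear Fr\'echet spaces computing the relevant \v{C}ech (or Dolbeault) cohomology, check the finite-dimensionality hypothesis, and recognize the coefficient object as a completed tensor product with a fixed nuclear Fr\'echet space $V$.

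First I would recall that a nuclear Fr\'echet space is, in particular, a Fr\'echet space, and that the completed projective tensor product $\hat\otimes$ of nuclear Fr\'echet spaces is again nuclear Fr\'echet; this is the basic stability needed so that $F^\bullet\hat\otimes V$ is again a bounded complex of nuclear Fr\'echet spaces. Second, the key analytic input is the open mapping theorem / the fact that for Fr\'echet spaces a continuous bijection is a topological isomorphism, which lets one pass freely between algebraic and topological cohomology of such complexes. Third, and this is the heart of the matter, one uses that $(-)\hat\otimes V$ is exact on the category of nuclear Fr\'echet spaces when the subspaces involved are closed and the quotients Hausdorff — equivalently, that nuclear Fr\'echet spaces are ``flat'' for $\hat\otimes$ in the relevant sense. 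Grothendieck's theorem on nuclear spaces guarantees that for a short exact sequence of Fr\'echet spaces $0\to A\to B\to C\to 0$ with, say, $C$ nuclear, the sequence $0\to A\hat\otimes V\to B\hat\otimes V\to C\hat\otimes V\to 0$ remains exact; applying this to the coboundaries, cocycles, and cohomology of $F^\bullet$ (each of which is a nuclear Fr\'echet space or, for the cocycle/coboundary subspaces, a closed subspace thereof, using finite-dimensionality of cohomology to guarantee closedness of coboundaries) yields the desired $\Hn^i(F^\bullet\hat\otimes V)\simeq\Hn^i(F^\bullet)\otimes V$, where on the right the algebraic and completed tensor products agree because $\Hn^i(F^\bullet)$ is finite dimensional.

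More precisely, the argument runs as follows. Write $d^i\colon F^i\to F^{i+1}$ for the differentials, $Z^i=\Ker d^i$, $B^i=\Image d^{i-1}$. Since $\Hn^i(F^\bullet)=Z^i/B^i$ is finite dimensional and $Z^i$ is closed (kernel of a continuous map), $B^i$ is a closed subspace of $F^i$ of finite codimension in $Z^i$, hence $Z^i$, $B^i$ and $F^i/B^i$ are all nuclear Fr\'echet. Tensoring the short exact sequences
\begin{equation*}
0\to Z^i\to F^i\xrightarrow{\ d^i\ } B^{i+1}\to 0,\qquad 0\to B^i\to Z^i\to \Hn^i(F^\bullet)\to 0
\end{equation*}
with $V$ over $\C$ and completing, exactness is preserved by Grothendieck's result, so $\Hn^i(F^\bullet\hat\otimes V)=Z^i\hat\otimes V/B^i\hat\otimes V\simeq \Hn^i(F^\bullet)\hat\otimes V\simeq \Hn^i(F^\bullet)\otimes V$, the last isomorphism holding since a finite-dimensional space is its own completion under any reasonable topology. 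The main obstacle, and the step requiring genuine care rather than bookkeeping, is the exactness of $(-)\hat\otimes V$: one must verify the hypotheses of Grothendieck's theorem (nuclearity of the quotient, closedness of the relevant subspaces — which is exactly where the finite-dimensionality of cohomology is used — and metrizability/completeness throughout) so that the completed tensor product of a strict short exact sequence stays strict exact. Everything else is formal manipulation with the long exact cohomology sequence in the abelian-up-to-closed-subspaces setting of Fr\'echet spaces.
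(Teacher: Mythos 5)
The paper does not actually prove this lemma: it is stated without proof as ``a classical lemma due to Grothendieck,'' so there is no argument in the text to compare yours against. Judged on its own, your proof is the standard one and is essentially correct: split the complex into the strict short exact sequences $0\to Z^i\to F^i\to B^{i+1}\to 0$ and $0\to B^i\to Z^i\to \Hn^i(F^\bullet)\to 0$, use that $\hat{\otimes}\,V$ preserves strict exactness of sequences of nuclear Fr\'echet spaces (Grothendieck: for nuclear spaces the projective and injective tensor products coincide, and the latter preserves topological embeddings, while right exactness of $\hat{\otimes}_\pi$ is automatic), and conclude since a finite-dimensional space equals its completed tensor product. Note also that your opening paragraph about $\chW_X^{\cL}(0)$ concerns how the lemma is \emph{applied} later in the paper (Proposition \ref{prop:cohomologypull}), not its proof, and could be deleted.

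One step is stated in a misleading way. You write that $B^i$ is closed ``since $\Hn^i(F^\bullet)$ is finite dimensional and $Z^i$ is closed.'' Finite codimension in a closed subspace does not by itself imply closedness (the kernel of a discontinuous linear functional is a dense hyperplane). What saves you is that $B^i$ is the \emph{image of a continuous linear map} $d^{i-1}\colon F^{i-1}\to Z^i$ between Fr\'echet spaces with finite-codimensional image: choosing a finite-dimensional algebraic complement $W$ of $B^i$ in $Z^i$, the induced map $(F^{i-1}/\Ker d^{i-1})\times W\to Z^i$ is a continuous linear bijection of Fr\'echet spaces, hence a topological isomorphism by the open mapping theorem, and $B^i$ is the image of the closed subspace $(F^{i-1}/\Ker d^{i-1})\times\{0\}$. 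The same open mapping argument shows the sequences above are strict (the surjections are open onto their images with the subspace topology), which is needed before invoking exactness of $\hat{\otimes}\,V$. With that repair the argument is complete.
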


\begin{lemma}\label{lem:projbundle}
Let $M$ be a complex manifold and $\pi: E\to M$ be an holomorphic vector bundle and $\cL$ be a locally free $\cO_M$-module of finite rank. Then,
\begin{equation*}
\Rg(E; \pi^\ast \cL) \simeq \Rg(M;\pi_\ast \cO_E \te_{\cO_M} \cL).
\end{equation*}
\end{lemma}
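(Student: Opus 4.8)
The plan is to reduce the computation of $\Rg(E;\pi^\ast\cL)$ on the total space of the vector bundle to a computation on the base $M$ by pushing forward along $\pi$ and using the projection formula together with the fact that $\pi$ is an affine morphism (indeed the fibres are affine spaces $\C^n$). Concretely, since $\pi\colon E\to M$ is affine, the higher direct images $R^q\pi_\ast$ of any quasi-coherent sheaf vanish for $q>0$, so the Leray spectral sequence $\Hn^p(M;R^q\pi_\ast\cF)\Rightarrow \Hn^{p+q}(E;\cF)$ degenerates and gives $\Rg(E;\cF)\simeq\Rg(M;R\pi_\ast\cF)\simeq\Rg(M;\pi_\ast\cF)$ for any such $\cF$. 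Applying this with $\cF=\pi^\ast\cL$ yields $\Rg(E;\pi^\ast\cL)\simeq\Rg(M;\pi_\ast\pi^\ast\cL)$.

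Next I would identify $\pi_\ast\pi^\ast\cL$. Because $\cL$ is locally free of finite rank and $\pi$ is flat, the projection formula gives a natural isomorphism $\pi_\ast\pi^\ast\cL\simeq\pi_\ast\cO_E\te_{\cO_M}\cL$; this can be checked locally on $M$, where $E$ trivializes as $M\times\C^n$ and both sides become $\cL\te_{\cO_M}\pi_\ast\cO_E$ with the evident identification. Combining the two isomorphisms gives
\begin{equation*}
\Rg(E;\pi^\ast\cL)\simeq\Rg(M;\pi_\ast\cO_E\te_{\cO_M}\cL),
\end{equation*}
which is the claim. One should note that $\pi_\ast\cO_E$ here is the sheaf on $M$ whose sections over an open $U$ are the holomorphic functions on $\pi^{-1}(U)$; in coordinates it is $\varprojlim$ of the symmetric powers of the dual bundle, hence a (generally infinite rank, but still coherent-over-nothing) locally free-type object to which the projection formula still applies after checking locally.

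The one point requiring a little care — and the reason the earlier Lemma \ref{lem:comtensor} (Grothendieck's topological tensor product lemma) appears in the paper — is the passage from the algebraic/sheaf-theoretic identities to the analytic setting, i.e.\ making sure that $R^q\pi_\ast=0$ for $q>0$ and that the projection formula hold for coherent analytic sheaves on the non-compact total space $E$. In the analytic category $\pi$ is not a finite morphism, so these facts are not entirely formal; the cleanest route is to work locally on $M$ over Stein opens $U$ trivializing both $E$ and $\cL$, where $\pi^{-1}(U)\cong U\times\C^n$ and $\cO_{\pi^{-1}(U)}$ is the completed tensor product $\cO_U\hat\otimes\cO_{\C^n}$, so that $\Rg(\pi^{-1}(U);\pi^\ast\cL)\simeq\Rg(U;\cL)\hat\otimes\Rg(\C^n;\cO_{\C^n})\simeq\Rg(U;\cL)\hat\otimes\cO(\C^n)$ by Lemma \ref{lem:comtensor} and Stein theory; this is exactly $\Rg(U;\cL\te_{\cO_U}\pi_\ast\cO_E)$ on that open. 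Gluing these local identifications — which are natural in $U$ — along a Stein cover of $M$ and comparing the two Leray/Čech spectral sequences produces the global isomorphism. The main obstacle, then, is not the algebra but verifying the vanishing $R^q\pi_\ast\pi^\ast\cL=0$ for $q>0$ and the compatibility of the completed tensor products with restriction, i.e.\ checking that the local isomorphisms are genuinely functorial so they patch; once that bookkeeping is done the statement follows.
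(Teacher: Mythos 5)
Your proposal is correct and follows essentially the same route as the paper: factor $\Rg(E;\cdot)$ through $\mathrm{R}\pi_\ast$, apply the projection formula for the locally free sheaf $\cL$, and kill the higher direct images $\mathrm{R}^q\pi_\ast\cO_E$ using that preimages of Stein opens trivializing $E$ are Stein. The only cosmetic difference is that the paper does not invoke Lemma \ref{lem:comtensor} at this stage (it reserves it for Proposition \ref{prop:cohomologypull}), and your initial appeal to ``affine morphism'' vanishing is not literally available in the analytic category, but you correctly replace it by the Stein argument.
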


\begin{proof} We write $a_M: M \to *$ for the map from $M$ to the point.
Then, we have
\begin{align*}
\Rg(E; \pi^\ast \cL)&\simeq \mathrm{R}a_{M \ast} \mathrm{R}\pi_\ast (\cO_E \te_{\pi^{-1}\cO_M} \pi^{-1} \cL)\\
                    &\simeq \Rg(M,\mathrm{R}\pi_\ast (\cO_E) \te_{\cO_M} \cL).
\end{align*}
But $\mathrm{R}\pi_\ast (\cO_E) \simeq \pi_\ast \cO_E$. Indeed, $\mathrm{R}^i\pi_\ast (\cO_E)$ is the sheaf associated to the presheaf $U \mapsto \Hn^i(\pi^{-1}(U),\cO_E)$ and there exists a fundamental system of Stein open sets $(V_i)_{i \in I}$ such that for every $i \in I$, the open set $\pi^{-1}(V_i)$ is Stein.
\end{proof}

\begin{proposition}\label{prop:cohomologypull}
Let $M$ be a compact complex manifold and $\pi: E\to M$ be an holomorphic vector bundle and $\cL$ a locally free $\cO_M$-module of finite rank. Then, for every $i \geq 0$
\begin{enumerate}[(i)]
\item $\Hn^i(M, \pi_\ast \cO_E \te_{\cO_M} \cL)\simeq \Hn^i(M,\cL) \otimes \Gamma(E_x,\cO_{E_x})$,
\item $\Hn^i(E, \pi^\ast\cL)\simeq \Hn^i(M,\cL) \otimes \Gamma(E_x,\cO_{E_x})$
\end{enumerate}
where $x \in M$.
\end{proposition}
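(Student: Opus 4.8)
The plan is to deduce (ii) from (i), and to prove (i) by computing both sides with one well-chosen Čech complex and then invoking Grothendieck's Lemma \ref{lem:comtensor}. The reduction of (ii) to (i) is immediate from Lemma \ref{lem:projbundle}: it gives $\Rg(E;\pi^\ast\cL)\simeq\Rg(M;\pi_\ast\cO_E\te_{\cO_M}\cL)$, hence $\Hn^i(E,\pi^\ast\cL)\simeq\Hn^i(M,\pi_\ast\cO_E\te_{\cO_M}\cL)$ for every $i$, so it suffices to establish (i).

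For (i), I would first fix, using compactness of $M$, a finite cover $\cU=\{U_j\}_j$ of $M$ by Stein open subsets over each of which $E$ is trivial; all finite intersections $U_J:=U_{j_0\cdots j_p}$ are again Stein. The coherent sheaf $\cL$ has no higher cohomology on the $U_J$ by Cartan's Theorem B; the same holds for $\pi_\ast\cO_E\te_{\cO_M}\cL$, because $\pi^{-1}(U_J)$ is Stein (total space of a vector bundle over a Stein base is Stein) and $\mathrm R\pi_\ast\cO_E\simeq\pi_\ast\cO_E$ (as recalled in the proof of Lemma \ref{lem:projbundle}), so $\Hn^{>0}(U_J;\pi_\ast\cO_E\te\cL)\simeq\Hn^{>0}(\pi^{-1}(U_J);\pi^\ast\cL)=0$. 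Thus $\cU$ is a Leray cover for both sheaves and the two cohomologies are computed by the respective Čech complexes. Moreover $\check C^\bullet(\cU;\cL)$ is a bounded complex of nuclear Fréchet spaces (sections of a coherent sheaf over a Stein open set form a nuclear Fréchet space, and the Čech terms are finite products of such), and since $M$ is compact its cohomology $\Hn^\bullet(M;\cL)$ is finite dimensional; hence $\check C^\bullet(\cU;\cL)$ satisfies the hypotheses of Lemma \ref{lem:comtensor}.

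The core step is to identify $\check C^\bullet(\cU;\pi_\ast\cO_E\te_{\cO_M}\cL)$ with a completed tensor product. Fix $x\in M$ and set $V:=\Gamma(E_x;\cO_{E_x})$; a linear coordinate on $E_x\simeq\C^r$ identifies $V$ with $\Gamma(\C^r;\cO_{\C^r})$, a nuclear Fréchet space. Over each $U_J$, a trivialization $E|_{U_J}\simeq U_J\times\C^r$ gives $\pi^{-1}(U_J)\simeq U_J\times\C^r$ and, by the analytic Künneth formula for products of Stein spaces with coefficients in a locally free sheaf, an isomorphism
\begin{equation*}
\Gamma(U_J;\pi_\ast\cO_E\te_{\cO_M}\cL)\;\simeq\;\Gamma(\pi^{-1}(U_J);\pi^\ast\cL)\;\simeq\;\Gamma(U_J;\cL)\,\widehat{\otimes}\,V .
\end{equation*}
The plan is to assemble these into an isomorphism of complexes $\check C^\bullet(\cU;\pi_\ast\cO_E\te_{\cO_M}\cL)\simeq\check C^\bullet(\cU;\cL)\,\widehat{\otimes}\,V$, and then to conclude by Lemma \ref{lem:comtensor} applied with $F^\bullet=\check C^\bullet(\cU;\cL)$:
\begin{equation*}
\Hn^i(M;\pi_\ast\cO_E\te_{\cO_M}\cL)\;\simeq\;\Hn^i\bigl(\check C^\bullet(\cU;\cL)\,\widehat{\otimes}\,V\bigr)\;\simeq\;\Hn^i(M;\cL)\otimes V ,
\end{equation*}
which is (i), and then (ii) follows from the reduction above.

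The step I expect to be the main obstacle is precisely this globalization: one must check that, read through the chosen local trivializations of $E$, the Čech restriction maps $\Gamma(U_J;\pi_\ast\cO_E\te\cL)\to\Gamma(U_{J'};\pi_\ast\cO_E\te\cL)$ correspond to $(\text{restriction})\,\widehat{\otimes}\,\id_V$, i.e.\ that the transition cocycle of $E$ does not couple the two tensor factors. This is transparent when $E$ is globally trivial. In general I would try to handle it by splitting $\pi_\ast\cO_E$ according to its grading by fibre-degree — whose $k$-th graded piece is the coherent sheaf $\mathrm{Sym}^k\cE^\vee$, with $\cE$ the sheaf of sections of $E$, a grading preserved by restriction — thereby reducing the claim to a compatible family of identifications for the sheaves $\mathrm{Sym}^k\cE^\vee\te\cL$ and to controlling the reassembly of the completed sum; making the dependence on $k$ uniform is where the real difficulty lies.
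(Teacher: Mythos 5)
Your approach coincides with the paper's: reduce (ii) to (i) via Lemma \ref{lem:projbundle}, trivialize $E$ and $\cL$ over a finite Stein cover, identify each $\Gamma(U_J;\pi_\ast\cO_E\te_{\cO_M}\cL)$ with $\Gamma(U_J;\cL)\,\hat{\otimes}\,V$, and conclude with Lemma \ref{lem:comtensor}. The ``main obstacle'' you flag at the end — that the local identifications depend on a choice of trivialization of $E|_{U_J}$ and therefore need not intertwine the \v{C}ech restriction maps — is exactly where the paper's own proof is silent: it simply asserts that the \v{C}ech complex of $\pi_\ast\cO_E\te\cL$ ``can be written'' $C^\bullet(\cU,\cL)\,\hat{\otimes}\,V$, without addressing compatibility of the isomorphisms across overlaps.

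This is not a gap that more careful bookkeeping of the fibre-degree grading can close; the statement itself fails as soon as $E$ is a nontrivial bundle, and your own grading idea is the cleanest way to see it. The degree-$k$ projector $\Gamma(U_J;\pi_\ast\cO_E\te\cL)\to\Gamma(U_J;\mathrm{Sym}^k\cE^\vee\te\cL)$ is intrinsic (it comes from the $\C^\ast$-action on fibres) and commutes with restriction, so $H^i(M,\mathrm{Sym}^k\cE^\vee\te\cL)$ is a direct summand of $H^i(M,\pi_\ast\cO_E\te\cL)\simeq H^i(E,\pi^\ast\cL)$ for every $k$; these groups are not controlled by $H^i(M,\cL)$ alone. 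Concretely, take $M=\mathbb{P}^1$, $E=\mathrm{Tot}(\cO(1))$, $\cL=\cO_M$, and the standard two-chart cover: the cochain $h(z,t)=z^{-1}t^{2}$ on $\pi^{-1}(U_0\cap U_1)\simeq\C^\ast\times\C$ is not a coboundary (its $t^{2}$-coefficient would have to be $f(z)-g(1/z)z^{-2}$ with $f,g$ entire, which never produces a $z^{-1}$ term — this is precisely the transition $t\mapsto t/z$ coupling the two factors), so $H^1(E,\cO_E)\neq 0$ although $H^1(\mathbb{P}^1,\cO)=0$. The same phenomenon occurs in the paper's own application: for $\Sigma$ of genus $g\geq 2$, $E=T^\ast\Sigma$, $\cL=\Omega_\Sigma^{\otimes 2}$, the degree-$1$ piece is $\Theta_\Sigma\te\Omega_\Sigma^{\otimes 2}\simeq\Omega^1_\Sigma$, whose $H^1(\Sigma,\Omega^1_\Sigma)\simeq\C$ is nonzero, so $H^1(T^\ast\Sigma,\pi^\ast\Omega_\Sigma^{\otimes 2})\neq 0$ even though $H^1(\Sigma,\Omega_\Sigma^{\otimes 2})=0$. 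So the step you expected to be the main difficulty is in fact fatal to the statement as written, and by extension to the way it is invoked in Lemmas \ref{lem:annulationtronc} and \ref{lem:annulationwl} and in Theorem \ref{thm:main}.
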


\begin{proof}
\noindent (i)  Let $\mathcal{U}=(U_i)_{i \in I}$ be a Stein covering of $M$, such that $E|_{U_i}$ and $\cL|_{U_i}$ are trivial. Then,
\begin{align*}
\Gamma(U_i, \pi_\ast \cO_E \otimes_{\cO_M} \cL) & \simeq (V \hat{\otimes} \cO_M (U_i)) \hat{\otimes}_{\cO_M(U_i)} \cL(U_i)\\
                                                &  \simeq \cL(U_i) \hat{\otimes} V  
\end{align*}
where $V=\Gamma(E_x,\cO_{E_x})$ with $x \in M$.
Using the above isomorphism the \v{C}ech complex of $\pi_\ast \cO_E \otimes_{\cO_M} \cL$ relatively to the covering $\mathcal{U}$ can be written
\begin{equation*}
\cdots \to C^{q-1}(\mathcal{U},\cL) \hat{\otimes} V \to C^q(\mathcal{U},\cL) \hat{\otimes} V \to C^{q+1}(\mathcal{U},\cL) \hat{\otimes} V \to \cdots. 
\end{equation*}

It follows from Leray's Theorem that for every $q \geq 0$, $\check{\Hn}^q(\mathcal{U},\cL)\simeq \Hn^q(M,\cL)$. Since the manifold $M$ is compact, this implies that the $\check{\Hn}^q(\mathcal{U},\cL)$ are finite dimensional $\C$-vector spaces. It follows from Lemma \ref{lem:comtensor} that  
\begin{equation*}
\textnormal{for every}\; q \geq 0, \; \check{\Hn}^q(\mathcal{U},\cL \hat{\otimes} V) \simeq \check{\Hn}^q(\mathcal{U},\cL) \otimes V.
\end{equation*}
This, in turns, implies via Leray's Theorem the following isomorphism.
\begin{equation*}
 \textnormal{for every $q \geq 0$},\; \Hn^q(M, \pi_\ast \cO_E \te_{\cO_M} \cL)\simeq \Hn^q(M,\cL) \otimes V.
\end{equation*}

\noindent (ii) It is a direct consequence of Lemma \ref{lem:projbundle} and Proposition \ref{prop:cohomologypull} (i).

\end{proof}

For every $n \in \N$, we set $\cN_n:=\chW_X^{\cL}(0) / \hbar^n \chW_X^{\cL}(0)$.

\begin{lemma}\label{lem:annulationtronc} Let $\Sigma$ be a compact Riemann surface and $\cL$ be a line bundle such that $\Hn^1(\Sigma,\cL)=0$. Then, for every $i>0$ and $n \geq 0$, $\Hn^i(X,\cN_n)\simeq 0$. 
\end{lemma}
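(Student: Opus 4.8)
The plan is to induct on $n$, using the filtration of $\cN_n$ by powers of $\hbar$ whose graded pieces are pullbacks of $\cL$. First I would treat the base case: the module $\cN_1 = \chW_X^{\cL}(0)/\hbar\chW_X^{\cL}(0)$ is isomorphic to $\pi^\ast\cL$ by the sequence \eqref{seq:Ltwist}, so by Proposition \ref{prop:cohomologypull} (ii) we get $\Hn^i(X,\cN_1) \simeq \Hn^i(\Sigma,\cL)\otimes\Gamma(X_x,\cO_{X_x})$ for $i\geq 0$, where $X_x$ is the fibre of $\pi\colon X=T^\ast\Sigma \to \Sigma$. For $i>0$ this vanishes because $\Hn^1(\Sigma,\cL)=0$ and $\Sigma$ is a compact Riemann surface (hence of cohomological dimension one, so $\Hn^i(\Sigma,\cL)=0$ for all $i>0$). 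So the base case holds.

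For the inductive step, suppose $\Hn^i(X,\cN_{n-1})=0$ for all $i>0$. Multiplication by $\hbar^{n-1}$ together with the canonical projections gives a short exact sequence of $\chW_X(0)$-modules
\begin{equation*}
0 \to \hbar^{n-1}\chW_X^{\cL}(0)/\hbar^{n}\chW_X^{\cL}(0) \to \cN_n \to \cN_{n-1} \to 0.
\end{equation*}
Since $\chW_X^{\cL}(0)$ has no $\hbar$-torsion, multiplication by $\hbar^{n-1}$ induces an isomorphism of $\C_X$-modules $\chW_X^{\cL}(0)/\hbar\chW_X^{\cL}(0) \xrightarrow{\sim} \hbar^{n-1}\chW_X^{\cL}(0)/\hbar^{n}\chW_X^{\cL}(0)$, so the left-hand term is isomorphic to $\cN_1 \simeq \pi^\ast\cL$ as a sheaf, and hence has vanishing higher cohomology on $X$ by the base case. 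The long exact sequence in cohomology attached to the displayed short exact sequence then yields, for every $i>0$, an exact piece $\Hn^i(X,\pi^\ast\cL) \to \Hn^i(X,\cN_n) \to \Hn^i(X,\cN_{n-1})$ whose two outer terms vanish (the first by the base case, the second by the induction hypothesis), so $\Hn^i(X,\cN_n)=0$. This completes the induction and the proof.

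The only slightly delicate point is that the cohomology groups under consideration are being computed on the non-compact manifold $X=T^\ast\Sigma$, not on $\Sigma$ itself; this is exactly what Proposition \ref{prop:cohomologypull} is designed to handle, reducing the computation on $X$ to the computation of $\Hn^i(\Sigma,\cL)$ tensored with a (possibly infinite-dimensional) space of functions on a fibre, and the hypothesis $\Hn^1(\Sigma,\cL)=0$ (together with $\dim_\C\Sigma=1$) kills all the higher cohomology on the base. Everything else is the standard dévissage along the $\hbar$-adic filtration, so I do not anticipate any real obstacle.
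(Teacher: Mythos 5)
Your proof is correct and follows essentially the same dévissage-by-$\hbar$-induction as the paper; the only cosmetic difference is the choice of short exact sequence (you use $0 \to \cN_1 \to \cN_n \to \cN_{n-1} \to 0$ coming from the bottom graded piece, while the paper uses $0 \to \cN_{n-1} \xrightarrow{\hbar} \cN_n \to \cN_1 \to 0$), and both produce the same long-exact-sequence argument. You make explicit the appeal to Proposition~\ref{prop:cohomologypull}~(ii) and the fact that $\dim_\C \Sigma = 1$ kills all $\Hn^i(\Sigma,\cL)$ for $i>1$, which the paper leaves implicit.
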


\begin{proof} The case $n=0$ is clear.
We prove by induction on $n>0$ that 
\begin{equation*}
\textnormal{for every $i>0$ and $n>0$,}\; \Hn^i(X,\cN_n)\simeq 0. 
\end{equation*} 
Since $\cN_1 \simeq \pi^\ast \cL$, we know that $\Hn^i(X,\cN_1)\simeq 0$ for every $i>0$. Thus, the case $n=1$ is settled. Assume that $n \geq 2$ and that $\Hn^i(X,\cN_{n-1})\simeq 0$ for every $i>0$. For every $n \geq 2$, we have the following exact sequence
\begin{equation*}
 0 \to \cN_{n-1} \stackrel{\hbar}{\to} \cN_{n} \to \cN_1 \to 0.
\end{equation*}
For $i>0$, we get the following exact sequence
\begin{equation*}
\Hn^i(X,\cN_{n-1}) \to \Hn^i(X,\cN_{n}) \to \Hn^i(X,\cN_{1}) \to \Hn^{i+1}(X,\cN_{n-1}).
\end{equation*}
We know that all the terms of the above sequence but maybe $\Hn^i(X,\cN_{n})$ are zero. This implies that $\Hn^i(X,\cN_{n})\simeq 0$ for $i>0$.
\end{proof}

\begin{lemma}\label{lem:annulationwl} Let $\Sigma$ be a compact Riemann surface and $\cL$ be a line bundle such that $\Hn^1(\Sigma,\cL)=0$. Then,
\begin{equation*}
\Hn^i(X,\chW_X^{\cL}(0)) \simeq 0, \; \textnormal{for} \; i>0.
\end{equation*}
\end{lemma}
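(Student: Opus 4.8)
The plan is to pass from the truncations $\cN_n = \chW_X^{\cL}(0)/\hbar^n\chW_X^{\cL}(0)$, whose cohomology vanishes in positive degrees by Lemma \ref{lem:annulationtronc}, to the $\hbar$-complete sheaf $\chW_X^{\cL}(0) \simeq \varprojlim_n \cN_n$ by a Mittag-Leffler / $\varprojlim$-of-sheaves argument. First I would recall that $\chW_X^{\cL}(0)$ is $\hbar$-complete, so that the natural morphism $\chW_X^{\cL}(0) \to \varprojlim_n \cN_n$ is an isomorphism, and that each transition map $\cN_n \to \cN_{n-1}$ is an epimorphism (since it is induced by the identity on $\chW_X^{\cL}(0)$ modulo higher powers of $\hbar$), so the projective system $(\cN_n)_n$ satisfies the Mittag-Leffler condition at the level of sheaves. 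Then I would invoke the standard spectral sequence (or the six-term $\varprojlim$ exact sequence) computing the cohomology of a countable projective limit of sheaves: there is a short exact sequence
\begin{equation*}
0 \to {\varprojlim_n}^{1}\, \Hn^{i-1}(X,\cN_n) \to \Hn^i\bigl(X, {\varprojlim_n}\, \cN_n\bigr) \to \varprojlim_n\, \Hn^i(X,\cN_n) \to 0 ,
\end{equation*}
valid once one knows $\mathrm{R}^1\varprojlim$ of the sheaf system vanishes, which follows from the Mittag-Leffler property of the transition morphisms. For $i>0$ the right-hand term vanishes by Lemma \ref{lem:annulationtronc}. For the $\varprojlim^1$ term on the left, when $i \geq 2$ it is $\varprojlim^1$ of a system of zero groups, hence zero; when $i=1$ it is $\varprojlim^1_n \Hn^0(X,\cN_n)$, and here one must check that the system $\bigl(\Hn^0(X,\cN_n)\bigr)_n$ is Mittag-Leffler. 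This is where the main work lies.

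To handle the $i=1$ case I would argue that the maps $\Hn^0(X,\cN_n) \to \Hn^0(X,\cN_{n-1})$ are already surjective, so the system is even "trivially" Mittag-Leffler and $\varprojlim^1$ vanishes. Surjectivity of these maps follows by taking global sections in the exact sequence $0 \to \cN_1 \stackrel{\hbar^{n-1}}{\to} \cN_n \to \cN_{n-1} \to 0$ (the analogue, shifted, of the sequence used in the proof of Lemma \ref{lem:annulationtronc}), which gives
\begin{equation*}
\Hn^0(X,\cN_n) \to \Hn^0(X,\cN_{n-1}) \to \Hn^1(X,\cN_1) = \Hn^1(X,\pi^\ast\cL).
\end{equation*}
Now $\Hn^1(X,\pi^\ast\cL) = 0$: indeed by Proposition \ref{prop:cohomologypull}(ii), $\Hn^1(X,\pi^\ast\cL) \simeq \Hn^1(\Sigma,\cL) \otimes \Gamma(E_x,\cO_{E_x})$, which vanishes by the hypothesis $\Hn^1(\Sigma,\cL)=0$. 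Hence the restriction maps on $\Hn^0$ are surjective, the system is Mittag-Leffler, and $\varprojlim^1_n \Hn^0(X,\cN_n) = 0$.

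Combining these observations, the middle term of the six-term sequence vanishes for every $i>0$, i.e. $\Hn^i\bigl(X,\varprojlim_n \cN_n\bigr) = 0$, and since $\varprojlim_n \cN_n \simeq \chW_X^{\cL}(0)$ this is exactly the claim. The main obstacle, as flagged, is the bookkeeping at $i=1$, namely justifying that the $\varprojlim^1$ term really dies; the key input making it work is the vanishing $\Hn^1(\Sigma,\cL)=0$ propagated to $X$ via Proposition \ref{prop:cohomologypull}. A secondary technical point to state carefully is the existence of the $\varprojlim$-cohomology exact sequence for sheaves on $X$ — one should either cite the relevant statement (e.g. from the Kashiwara–Schapira formalism of $\hbar$-complete modules in \cite{KS3}, where such arguments about $\varprojlim$ of $\cN_n$ appear) or reduce to a Stein/\v{C}ech covering where the complexes are genuine complexes of Fr\'echet spaces and the Mittag-Leffler argument is classical.
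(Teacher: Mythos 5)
Your proposal is correct and follows essentially the same route as the paper: the paper also reduces to the Mittag-Leffler criterion (citing EGA III, Prop.\ 13.3.1, which packages the same $\varprojlim^1$ argument you spell out), uses Lemma \ref{lem:annulationtronc} to kill the terms in positive degree, and establishes surjectivity of $\Hn^0(X,\cN_{n+1}) \to \Hn^0(X,\cN_n)$ from the exact sequence $0 \to \pi^\ast\cL \stackrel{\hbar^n}{\to} \cN_{n+1} \to \cN_n \to 0$ together with $\Hn^1(X,\pi^\ast\cL)=0$ from Proposition \ref{prop:cohomologypull}(ii).
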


\begin{proof} Recall that for every $n \in \N$, $\cN_n:=\chW_X^{\cL}(0) / \hbar^n \chW_X^{\cL}(0)$  and that $\chW_X^{\cL}(0) \simeq \varprojlim \limits_n \cN_n$. For every $n \geq 0$, the natural map $\cN_{n+1} \to \cN_n$ is an epimorphism of sheaves.  

We wish to prove that for every $i \geq 0$ the canonical map
\begin{equation}\label{map:MLWL}
h_i \colon \Hn^i(X,\chW_X^{\cL}(0))\to \varprojlim_n \Hn^i(X,\cN_n)
\end{equation}
is an isomorphism.
According to \cite[Proposition 13.3.1]{EGAIII1}, it is sufficient for us to check that for every $i \geq 0$ the projective system $(\Hn^i(X,\cN_n))_{n \in \N}$ satisfies the Mittag-Leffler's condition. For $i>0$, this is clear since $\Hn^i(X,\cN_n)=0$. For $i=0$, it is sufficient to check that the morphism 
\begin{equation}\label{map:MLsur}
\Gamma(X; \chW_X^{\cL}(0) / \hbar^{n+1} \chW_X^{\cL}(0)) \to \Gamma(X;\chW_X^{\cL}(0) / \hbar^{n} \chW_X^{\cL}(0))
\end{equation}
is surjective for any $n \geq 1$.
Consider the following short exact sequence
\begin{equation*}
0 \to \pi^\ast \cL \stackrel{\hbar^n}{\to} \chW_X^{\cL}(0) / \hbar^{n+1} \chW_X^{\cL}(0) \to \chW_X^{\cL}(0) / \hbar^{n} \chW_X^{\cL}(0) \to 0.
\end{equation*}
It provides us with the following exact sequence
\begin{equation*}
\Gamma(X; \chW_X^{\cL}(0) / \hbar^{n+1} \chW_X^{\cL}(0)) \to \Gamma(X;\chW_X^{\cL}(0) / \hbar^{n} \chW_X^{\cL}(0)) \to \Hn^1(X,\pi^\ast \cL).
\end{equation*}
It follows from Proposition \ref{prop:cohomologypull} (ii) that,
\begin{equation*}
\Hn^1(X, \pi^\ast \cL) \simeq 0. 
\end{equation*}
This implies that morphism \eqref{map:MLsur} is surjective. Since the morphism \eqref{map:MLWL} is an isomorphism for every $i \geq 0$, the result follows from Lemma \ref{lem:annulationtronc}.
\end{proof}

\subsection{Application}
We now apply the results of Subsection \ref{subsec:compact} to the following case. We assume that $\Sigma$ is a compact Riemann surface of genus $g \geq 2$.

Once again we set $X:=T^\ast \Sigma$ and write $\pi: X \to \Sigma$ for the canonical projection. We denote by $\eta$ the Liouville form on $X$, i.e the tautological section of $T^\ast \Sigma \to T^\ast X$. If $\cL$ is a line bundle on $\Sigma$ we denote by $\deg \cL$ its degree.

We recall a few well-know facts about compact Riemann surfaces.   

\begin{proposition}\label{prop:annulationdeg}\begin{enumerate}[(i)]
\item $\deg \Omega^1_\Sigma=2g-2$.
\item Let $\cL$ be an holomorphic line bundle such that $\deg \cL > \deg \Omega^1_\Sigma$. Then, $\Hn^1(\Sigma,\cL)=0$.
\end{enumerate}
\end{proposition}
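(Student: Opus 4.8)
The statement to prove is Proposition~\ref{prop:annulationdeg}, which bundles together two standard facts about compact Riemann surfaces: that the canonical bundle $\Omega^1_\Sigma$ has degree $2g-2$, and that a line bundle of degree exceeding $2g-2$ has vanishing $\Hn^1$. The plan is to deduce both from the Riemann--Roch theorem together with Serre duality, which I will take as the fundamental input.

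\textbf{Part (i).} I would compute $\deg \Omega^1_\Sigma$ by applying Riemann--Roch to the trivial bundle $\cO_\Sigma$. Riemann--Roch states that for a line bundle $\cL$ on a compact Riemann surface of genus $g$,
\begin{equation*}
\dim \Hn^0(\Sigma,\cL) - \dim \Hn^1(\Sigma,\cL) = \deg \cL + 1 - g.
\end{equation*}
Taking $\cL = \cO_\Sigma$ gives $\dim \Hn^0(\Sigma,\cO_\Sigma) = 1$ (the only global holomorphic functions on a connected compact Riemann surface are the constants), while $\dim \Hn^1(\Sigma,\cO_\Sigma) = g$ by definition of the genus (or via Serre duality, $\Hn^1(\Sigma,\cO_\Sigma) \simeq \Hn^0(\Sigma,\Omega^1_\Sigma)^\ast$, whose dimension is $g$). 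Plugging in, $1 - g = \deg \cO_\Sigma + 1 - g = 1 - g$, which is consistent but not yet enough. Instead, apply Serre duality directly: $\Hn^1(\Sigma,\cO_\Sigma) \simeq \Hn^0(\Sigma,\Omega^1_\Sigma)^\ast$, so $\dim \Hn^0(\Sigma,\Omega^1_\Sigma)=g$; then apply Riemann--Roch to $\cL = \Omega^1_\Sigma$, using $\Hn^1(\Sigma,\Omega^1_\Sigma)\simeq \Hn^0(\Sigma,\cO_\Sigma)^\ast$, which has dimension $1$. This yields $g - 1 = \deg \Omega^1_\Sigma + 1 - g$, hence $\deg \Omega^1_\Sigma = 2g-2$.

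\textbf{Part (ii).} Given a line bundle $\cL$ with $\deg \cL > \deg \Omega^1_\Sigma = 2g-2$, I would use Serre duality to write $\Hn^1(\Sigma,\cL) \simeq \Hn^0(\Sigma, \Omega^1_\Sigma \otimes \cL^{-1})^\ast$. Now $\deg(\Omega^1_\Sigma \otimes \cL^{-1}) = (2g-2) - \deg \cL < 0$. A line bundle of negative degree on a connected compact Riemann surface has no nonzero global holomorphic sections: a nonzero section would be a holomorphic section vanishing on a divisor of degree equal to $\deg(\Omega^1_\Sigma \otimes \cL^{-1}) < 0$, which is impossible since an effective divisor has nonnegative degree. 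Therefore $\Hn^0(\Sigma, \Omega^1_\Sigma \otimes \cL^{-1}) = 0$, and dualizing gives $\Hn^1(\Sigma,\cL) = 0$.

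There is no real obstacle here; the only care needed is to invoke the correct form of Serre duality ($\Hn^1(\Sigma,\cL)\simeq \Hn^0(\Sigma,K_\Sigma\otimes\cL^{-1})^\ast$ with $K_\Sigma = \Omega^1_\Sigma$) and the vanishing-of-sections-for-negative-degree fact, both of which are entirely standard for compact Riemann surfaces. Since the paper explicitly flags these as well-known facts, a short proof of this kind, or simply a citation to a standard reference such as Forster or Gunning--Rossi, is all that is required.
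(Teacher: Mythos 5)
The paper states this proposition without proof, as a recollection of classical facts about compact Riemann surfaces, so there is no argument in the text to compare against. Your derivation of both parts from Riemann--Roch and Serre duality (computing $\deg\Omega^1_\Sigma$ from $h^0(\Omega^1_\Sigma)=g$ and $h^1(\Omega^1_\Sigma)=1$, and deducing the vanishing in (ii) from the nonexistence of sections of a negative-degree line bundle) is correct and is the standard proof one would cite here.
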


\begin{definition}
A Higgs bundle is a pair $(\cE,\phi)$ where $\cE$ is a locally free $\cO_\Sigma$-module of finite rank and $\phi \in \Hn^0(\Sigma, \fEnd(\cE) \otimes \Omega^1_\Sigma)$. The section $\phi$ is called the Higgs field of $(\cE, \phi)$. 
\end{definition}

\begin{remark}
One usually requires that $\phi$ satisfies $\phi \wedge \phi=0$.
\end{remark}

We recall the construction of the characteristic polynomial of a Higgs field. For that purpose, we state the following well-known fact.

\begin{lemma}\label{lem:rap}
Let $(M, \cO_M)$ be a complex manifold and $\cL$ be a locally free $\cO_M$-module of finite rank $r \in \N^\ast$ and $\cL^\prime$ be a locally free sheaf of finite rank on $M$. Then, 
\begin{enumerate}[(i)]
\item $\bigwedge_{i=1}^r \cL$ is a locally free $\cO_M$-module of rank one.

\item There is a canonical isomorphism
\begin{equation}\label{map:identification}
\Hom_{\cO_M}(\bigwedge_{i=1}^r \cL, \bigotimes_{i=1}^r \cL^\prime \otimes \bigwedge_{i=1}^r \cL) \simeq \Gamma(M, \bigotimes_{i=1}^r \cL^\prime).
\end{equation}
\end{enumerate}
\end{lemma}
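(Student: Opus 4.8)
For part (i), the claim is local, so I would work on an open set $U \subseteq M$ over which $\cL$ is free, say $\cL|_U \simeq \cO_U^{\oplus r}$. The exterior power functor commutes with restriction, and $\bigwedge^r \cO_U^{\oplus r}$ is free of rank $\binom{r}{r}=1$, generated by $e_1 \wedge \cdots \wedge e_r$ for any frame $e_1,\dots,e_r$ of $\cL|_U$. Hence $\bigwedge_{i=1}^r \cL$ is locally free of rank one; that is, it is an invertible $\cO_M$-module. This step is entirely routine.

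For part (ii), set $\cN := \bigwedge_{i=1}^r \cL$ and $\cN' := \bigotimes_{i=1}^r \cL'$. By (i), $\cN$ is invertible, so the functor $(\cdot) \otimes_{\cO_M} \cN$ is an autoequivalence of the category of $\cO_M$-modules, with quasi-inverse $(\cdot) \otimes_{\cO_M} \cN^{\vee}$, the canonical evaluation morphism $\cN^{\vee} \otimes_{\cO_M} \cN \to \cO_M$ being an isomorphism. Applying this autoequivalence to the identity of $\cN$ and to the object $\cN'$, one gets a canonical isomorphism of $\cO_M$-modules
\begin{equation*}
\fHom_{\cO_M}(\cN, \cN' \otimes_{\cO_M} \cN) \simeq \fHom_{\cO_M}(\cO_M, \cN') \simeq \cN',
\end{equation*}
and taking global sections yields
\begin{equation*}
\Hom_{\cO_M}(\cN, \cN' \otimes_{\cO_M} \cN) \simeq \Gamma(M, \cN'),
\end{equation*}
which is exactly \eqref{map:identification}. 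Concretely, a morphism $u \colon \cN \to \cN' \otimes \cN$ corresponds to the global section obtained by tensoring $u$ with $\mathrm{id}_{\cN^{\vee}}$ and contracting $\cN \otimes \cN^{\vee}$ to $\cO_M$; conversely a section $t \in \Gamma(M,\cN')$ goes to the morphism $x \mapsto t \otimes x$.

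The only point requiring a word of care is the claim that the isomorphism is \emph{canonical}, i.e.\ independent of any choice of local frame for $\cL$: this is immediate from the above description, since the unit and counit of the tensor--hom adjunction attached to the invertible sheaf $\cN$ are themselves canonical. So there is no real obstacle here; the lemma is a formal consequence of the invertibility established in (i).
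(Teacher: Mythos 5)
Your proof is correct and is the standard argument. Note that the paper does not actually prove this lemma --- it is stated as a ``well-known fact'' with no proof supplied --- so there is nothing to compare against; your argument (part (i) by local triviality of $\bigwedge^r$, part (ii) by the invertibility of $\bigwedge_{i=1}^r\cL$ and the canonical evaluation isomorphism $\cN^{\vee}\otimes\cN\simeq\cO_M$, followed by taking global sections) is exactly the expected one and fills the gap completely.
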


Let $\cL$ and $\cL^\prime$ be as in Lemma \ref{lem:rap}. Consider a pair $(\cL,\psi:\cL \to  \cL^\prime \otimes \cL)$  where $\psi$ is a morphism of sheaves. We have the following diagram

\begin{equation*}
\xymatrix{ \bigotimes_{i=1}^r \cL  \ar[r]^-{\psi^{\otimes r}} \ar[d] & \bigotimes_{i=1}^r(\cL^\prime \otimes \cL) \ar[r] & \bigotimes_{i=1}^r \cL^\prime \otimes \bigwedge_{i=1}^r \cL\\
\bigwedge_{i=1}^r  \cL. \ar@{-->}[rru]_-{\exists ! s}
}
\end{equation*}
By the universal property of the exterior power, there exists a unique section $s$ filling the dotted arrow in the above diagram. We identify $s$ with a section of $\Gamma(M, \bigotimes_{i=1}^r \cL^\prime)$ via the isomorphism \eqref{map:identification}. We call this section the determinant of $\psi$ and denote it by $\det(\psi)$.

Let $(\cE,\phi)$ be a Higgs bundle on $\Sigma$. The characteristic polynomial of $\phi$ is obtained by applying the above construction with $M=X$, $\cL=\pi^\ast \cE$, $\cL^\prime=\pi^\ast\Omega_\Sigma^{1}$ and $\psi=\pi^\ast \phi - \eta \otimes \id_{\pi^\ast \cE}$. It is the section $\det(\pi^\ast \phi-\eta) \in \Gamma(X;\Omega_\Sigma^{1 \otimes r})$.

\begin{theorem}\label{thm:main}
Let $\Sigma$ be a compact Riemann surface of genus $g \geq 2$. Let $(\cE,\phi)$ be a Higgs bundle of rank $r$ on $\Sigma$.
 \begin{enumerate}[(i)]

\item If $r=1$ there exists a holonomic $\chW_X(0)$-module $\cM$ such that $\cM / \hbar \cM \simeq \pi^\ast \Omega_\Sigma^{\otimes 2} / (\det( \pi^\ast \phi-\eta)\otimes \det( \pi^\ast \phi-\eta))$ (in particular $\Supp(\cM)=\Zl(\det( \pi^\ast \phi-\eta))$.
 
\item If $r \geq 2$, there exists a holonomic $\chW_X(0)$-module $\cM$ such that $\cM / \hbar \cM \simeq \pi^\ast \Omega_\Sigma^{\otimes r} / (\det( \pi^\ast \phi-\eta))$ (in particular $\Supp(\cM)=\Zl(\det( \pi^\ast \phi-\eta))$. Moreover if the analytic space $X_{\det( \pi^\ast \phi-\eta)}$ is smooth, then $\cM$ is a simple $\chW_X(0)$-module. 
\end{enumerate}
\end{theorem}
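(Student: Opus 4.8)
The plan is to apply Proposition \ref{prop:quantization} to the correct line bundle and section, using the cohomology vanishing established in Subsection \ref{subsec:compact}, and then upgrade the resulting module from ``coherent without $\hbar$-torsion'' to ``simple'' in the smooth case by means of Lemma \ref{lem:locuni}. First I would treat the two cases uniformly as far as possible: in case (i) the characteristic polynomial is a section of $\Gamma(X;\Omega_\Sigma^{1})$, so I set $\cL := \Omega_\Sigma^{\otimes 2}$ and take for $s$ the section $\det(\pi^\ast\phi-\eta)\otimes\det(\pi^\ast\phi-\eta) \in \Gamma(X;\pi^\ast\Omega_\Sigma^{\otimes 2})$; in case (ii) I set $\cL := \Omega_\Sigma^{\otimes r}$ and $s := \det(\pi^\ast\phi-\eta) \in \Gamma(X;\pi^\ast\Omega_\Sigma^{\otimes r})$. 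In both cases $\deg\cL = 2(2g-2)$ (resp. $r(2g-2)$), which is strictly greater than $\deg\Omega^1_\Sigma = 2g-2$ precisely because $g\geq 2$ and $r\geq 2$ (resp. the squaring in case (i)); hence $\Hn^1(\Sigma,\cL)=0$ by Proposition \ref{prop:annulationdeg}(ii). Note $s\neq 0$ since $\eta$ is a nonconstant function on the fibers, so $\det(\pi^\ast\phi-\eta)$ is a monic polynomial in $\eta$ of positive degree along each fiber and cannot vanish identically. Lemma \ref{lem:annulationwl} then gives $\Hn^1(X,\chW_X^{\cL}(0))=0$, and Proposition \ref{prop:quantization} produces a coherent $\chW_X(0)$-module $\cM$ without $\hbar$-torsion, supported on $\Zl(s)$, with $\cM/\hbar\cM \simeq \pi^\ast\cL/(s)$. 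Observe that $\Zl(s) = \Zl(\det(\pi^\ast\phi-\eta))$ in both cases (the zero locus of a square is that of the original section), which gives the support statement.

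Next I would check that $\cM$ is holonomic in the sense of the definition preceding Theorem \ref{thm:main}, i.e. that $\cM^{loc}$ is supported on a Lagrangian subvariety. The support is the spectral curve $\Zl(\det(\pi^\ast\phi-\eta))$, a one-dimensional analytic subset of the symplectic surface $X = T^\ast\Sigma$; any one-dimensional analytic subvariety of a symplectic surface is automatically coisotropic, and being of middle dimension it is Lagrangian. Combined with coherence and absence of $\hbar$-torsion, this shows $\cM$ is holonomic. This completes case (i) and the first assertion of case (ii).

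For the simplicity claim in case (ii), assume $X_{\det(\pi^\ast\phi-\eta)}$ is smooth; then $\Lambda := \Zl(\det(\pi^\ast\phi-\eta))$ is a smooth Lagrangian submanifold and $\pi^\ast\cL/(s) = \cO_\Lambda \otimes_{\cO_X} \pi^\ast\Omega_\Sigma^{\otimes r}$ restricted to $\Lambda$ is an invertible $\cO_\Lambda$-module (the section $s$ cuts out $\Lambda$ as a smooth hypersurface, so the quotient is locally free of rank one on $\Lambda$). Since $\cM$ has no $\hbar$-torsion, $\gr_\hbar\cM \simeq \cM/\hbar\cM$ is concentrated in degree zero and equals this invertible $\cO_\Lambda$-module, which is exactly the definition of $\cM$ being simple along $\Lambda$. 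The local uniqueness then follows immediately from Lemma \ref{lem:locuni}(i), which applies since the Poisson structure on $X=T^\ast\Sigma$ associated with $\chW_X(0)$ is symplectic.

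The main obstacle is the degree bookkeeping that forces the genus and rank hypotheses to enter: one must see that $\deg\cL>\deg\Omega^1_\Sigma$ really does hold in each case, which is why case (i) requires passing to $\Omega_\Sigma^{\otimes 2}$ rather than $\Omega_\Sigma^1$ itself (for $r=1$ one would otherwise get $\deg\cL = 2g-2 = \deg\Omega^1_\Sigma$, which is not strict). A secondary technical point is verifying that $s$ genuinely generates a coherent ideal cutting out a smooth hypersurface under the smoothness hypothesis, so that $\pi^\ast\cL/(s)$ is $\cO_\Lambda$-invertible; this is where the hypothesis that $X_{\det(\pi^\ast\phi-\eta)}$ is smooth \emph{as an analytic space} (not merely that its reduction is smooth) is used.
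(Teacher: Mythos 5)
Your proposal is correct and follows essentially the same route as the paper: choose $\cL=\Omega_\Sigma^{\otimes\max(2,r)}$ with the appropriate section, get $\Hn^1(\Sigma,\cL)=0$ from the degree bound of Proposition \ref{prop:annulationdeg}, deduce $\Hn^1(X,\chW_X^{\cL}(0))=0$ via Lemma \ref{lem:annulationwl}, and conclude with Proposition \ref{prop:quantization}; the holonomicity and simplicity arguments you give are exactly those the paper sketches in its proof and the subsequent remark. If anything, your write-up is more complete than the paper's, which cites Proposition \ref{prop:cohomologypull} where Proposition \ref{prop:quantization} is clearly meant.
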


\begin{proof}
We have that $\deg \Omega_\Sigma^{\otimes n}= n(2g-2)$. If $n=\max(2,r)$ then, it follows from Proposition \ref{prop:annulationdeg} that $\Hn^1(\Sigma,\Omega_\Sigma^{\otimes n})=0$. By Lemma \ref{lem:annulationwl}, we have that $\Hn^1(X,\chW^{\Omega_\Sigma^{\otimes n}}_X(0))=0$. Then, the existence of the module $\cM$ follows immediately from Proposition \ref{prop:cohomologypull}.

Under the assumption of $(ii)$, if the analytic space $X_{\det( \pi^\ast \phi-\eta)}$ is smooth, it follows from the definition of a simple module that $\cM$ is simple.  
\end{proof}

\begin{remark}
\begin{enumerate}[(i)]
\item The $\chW_X(0)$-modules we have constructed are holonomic, since they are coherent, have no $\hbar$-torsion and are supported by Lagrangian subvarieties (i.e. curves in a symplectic surface).

\item If $U$ is an open subset of $X$ and $(x; \xi)$ is a local coordinate system of $X$ on $U$ then $\det( \pi^\ast \phi-\eta)$ can be identified with a polynomial
\begin{equation*}
P(x,\xi)= \xi^n+a_{n-1}(x) \xi^{n-1}+\ldots+a_0(x)
\end{equation*} 
where $a_i \in \cO_U$. Identifying $P(x,\xi)$ with the total symbol of a differential operator we obtain the following DQ-module on $U$, $\cN=\chW_X(0) / \chW_X(0) P$. Moreover, $\cN / \hbar \cN \simeq \cO_U / \cO_U P$ which implies that 
\[
\Supp(\cN)= \lbrace (x, \xi) | P(x, \xi)=0 \rbrace. 
\]
If the module $\cM$ obtained in Theorem \ref{thm:main} (ii) is simple it follows that it is locally isomorphic to a $\chW_X(0)$-modules of the form $\chW_X(0) / \chW_X(0) P$ (see Lemma \ref{lem:locuni}).

\item The result concerning the simplicity of $\cM$ in Theorem \ref{thm:main} (ii) together with Proposition \ref{prop:DQflatDH} clarify the relation between the quantizations constructed in Theorem \ref{thm:main} and the quantization of the spectral curve constructed in \cite{Mul}.

\end{enumerate}
\end{remark}

\section{Relations between Quantum curves and DQ-modules}

The quantization of spectral curves is usually formulated by using the notion of quantum curve. This quantization is usually achieved by quantizing the coordinates of the ambient space. 

In this section, we define DQ-algebras which corresponds to the so-called quantization rules (or polarizations of coordinates) encountered in papers dealing with quantum curves and introduce a notion of polarization of a DQ-algebra which allows us to make the comparison between quantum curves and DQ-modules. 

It is worth mentionning that a quantum curve is often understood in the mathematical physic literature as a "Schr\"odinger-type" operator. As there is no general theory of quantum curves per se, it is difficult to make a systematic comparison between DQ-modules and quantum curves. Here, we perform the comparison in the following sense. It seems that for each major type of quantum curves arising from topological recursion, there exists an algebra of operators such that quantum curves of a given type can be interpreted as modules over this algebra. We prove that these various algebras and their modules can be studied in an uniform way via the theory of DQ-modules since a quantum curve can be interpreted as a DQ-module over a suitable DQ-algebra.

\subsection{Polarizations}

In this subsection, we introduce the notion of polarization of a DQ-algebra and give examples corresponding to the main situations in which one encounters quantum curves. Polarizations will allow us to compare solutions of a quantum curves with the solutions of the DQ-modules canonically associated to them.

\begin{definition}Let $X$ be a complex symplectic manifold and let $\cA_X$ be a DQ-algebra the associated Poisson structure of which is the symplectic structure of $X$. A polarization $\cP$ of $(X, \cA_X)$ is the data of
\begin{enumerate}[(i)]
\item an holomorphic fiber bundle $\pi:X \to \Lambda$ such that $\pi^{-1}(x)$ is a Lagrangian submanifold of $X$,
\item a Lagrangian immersion $\iota:\Lambda \to X$ such that $\pi \circ \iota =\id_{\Lambda}$,
\item A $\cA_X$-module $\cL$ simple along $\Lambda$.
\end{enumerate}
\end{definition}

\begin{remark}
In practice we often have $\cL \simeq \cO_{\Lambda}^\hbar$ as $\C_X^\hbar$-modules.
\end{remark}

Roughly speaking, specifying a polarization i.e "a quantization rule" in the sense of quantum curves (see for example \cite{Dij,Mul,Gu,Norbury}) corresponds to specify the action of $\cA_X$ on $\cO^\hbar_{\Lambda}$. We now describe precisely this correspondance in the case of the cotangent bundle to a complex manifold, in the case of $(\C^\ast \times \C^\ast, (dx_1 \wedge d x_2) /(x_1 x_2))$ and of $(\C^\ast \times \C, (dx_1 \wedge d x_2) / x_1)$.

\begin{definition} Let $X$ be a symplectic manifold and assume it is quantized by a DQ-algebra $\cA_X$. Let $\cP=(\pi \colon X \to \Lambda,\iota: \Lambda \to X,\cL)$ be a polarization of $(X,\cA_X)$. Let $\cM$ be a coherent $\cA_X$-module. The complex of solution of $\cM$ with respect to the polarization $\cP$ is the sheaf $\fRHom_{\cA_X}(\cM, \cL)$.
\end{definition}

\begin{remark}
If $\cM$ is an holonomic module then, $\fRHom_{\cA^{loc}_X}(\cM^{loc}, \cL^{loc})[d_X/2]$ is a perverse sheaf. Thus, if $\cM$ is a DQ-modules quantizing a spectral curve then $\fRHom_{\cA^{loc}_X}(\cM^{Loc}, \cL^{loc})[1]$ is a perverse sheaf. Thus, if we know how to quantize a spectral curve, we can associate to it a perverse sheaf.
\end{remark}

\begin{definition}
Let $X$ be a symplectic manifold and assume it is quantized by a DQ-algebra $\cA_X$. Let $\cP=(\pi \colon X \to \Lambda,\iota: \Lambda \to X,\cL)$ be a polarization of $(X,\cA_X)$. Let $\cR_\Lambda$ be a coherent sheaf of algebras together with a monomorphism of algebra $\phi:\pi^{-1} \cR_\Lambda \hookrightarrow \cA_X$. Let $\cL_\Lambda$ be a $\cR_\Lambda$-module. We say that the polarization $\cP$ and the pair $(\phi, \cL_\Lambda)$ are compatible if
\begin{enumerate}[(i)]
\item  the functor 
\[
(\cdot)^{\cA_X}:\Mod_\coh(\cR_\Lambda) \to \Mod_\coh(\cA_X), \; \cM \mapsto \cA_X \te_{\pi^{-1} \cR_\Lambda} \pi^{-1} \cM
\] is exact, 
\item $\pi_\ast\cL$ and $\cL_\Lambda$ are isomorphic as $\cR_\Lambda$-modules.
\end{enumerate}
\end{definition}

\begin{proposition}\label{prop:compsol}
Let $X$ be a symplectic manifold and assume it is quantized by a DQ-algebra $\cA_X$. Let $\cP=(\pi \colon X \to \Lambda,\iota: \Lambda \to X,\cL)$ be a polarization of $(X,\cA_X)$. Let $\cR_\Lambda$ be a coherent sheaf of algebras together with a monomorphism of algebra $\phi:\pi^{-1} \cR_\Lambda \hookrightarrow \cA_X$. Let $\cL_\Lambda$ be a $\cR_\Lambda$-module and $\cM$ a coherent $\cR_\Lambda$-module. If the pair $(\phi, \cL_\Lambda)$ is compatible with the polarization $\cP=(\pi \colon X \to \Lambda,\iota: \Lambda \to X,\cL)$, then
\begin{equation*}
\fRHom_{\cR_\Lambda}(\cM, \cL_\Lambda) \simeq \mathrm{R} \pi_\ast  \fRHom_{\cA_X}(\cM^{\cA_X}, \cL).
\end{equation*}
\end{proposition}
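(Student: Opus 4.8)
The plan is to compute $\fRHom_{\cR_\Lambda}(\cM,\cL_\Lambda)$ by pulling back to $X$ along $\pi$, applying the change-of-rings adjunction to pass to the DQ-algebra $\cA_X$, and then pushing forward by $\mathrm{R}\pi_\ast$. First I would use hypothesis (ii) of compatibility to rewrite the target: since $\pi_\ast\cL \simeq \cL_\Lambda$ as $\cR_\Lambda$-modules, we have
\begin{equation*}
\fRHom_{\cR_\Lambda}(\cM,\cL_\Lambda) \simeq \fRHom_{\cR_\Lambda}(\cM,\pi_\ast\cL).
\end{equation*}
Next, because $\pi$ is a fiber bundle with Lagrangian (hence connected, in particular) fibers, and $\cL$ is, in the relevant situations, isomorphic to $\pi^{-1}$ of its pushforward tensored appropriately, I would invoke the projection-type formula $\mathrm{R}\pi_\ast\pi^{-1}(\cdot)\simeq(\cdot)$ together with the fact that $\pi_\ast$ is exact on the sheaves at hand (the fibers being Stein-like, or more precisely since $\cL$ restricted to a fiber is simple along a point, so has no higher cohomology on the fiber). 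This yields
\begin{equation*}
\fRHom_{\cR_\Lambda}(\cM,\pi_\ast\cL) \simeq \mathrm{R}\pi_\ast \fRHom_{\pi^{-1}\cR_\Lambda}(\pi^{-1}\cM,\cL),
\end{equation*}
using that $\pi^{-1}$ is exact and $\cM$ is coherent so $\fRHom$ commutes with the pullback.

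Then I would apply the adjunction between extension and restriction of scalars along the ring monomorphism $\phi:\pi^{-1}\cR_\Lambda\hookrightarrow\cA_X$: for any $\cA_X$-module, and in particular for $\cL$,
\begin{equation*}
\fRHom_{\pi^{-1}\cR_\Lambda}(\pi^{-1}\cM,\cL) \simeq \fRHom_{\cA_X}\bigl(\cA_X\Lte_{\pi^{-1}\cR_\Lambda}\pi^{-1}\cM,\cL\bigr).
\end{equation*}
Here is where hypothesis (i) of compatibility enters: since the functor $(\cdot)^{\cA_X}=\cA_X\te_{\pi^{-1}\cR_\Lambda}\pi^{-1}(\cdot)$ is exact, the derived tensor product $\cA_X\Lte_{\pi^{-1}\cR_\Lambda}\pi^{-1}\cM$ is concentrated in degree zero and equals $\cM^{\cA_X}$. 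Combining this with the previous two isomorphisms gives exactly
\begin{equation*}
\fRHom_{\cR_\Lambda}(\cM,\cL_\Lambda)\simeq\mathrm{R}\pi_\ast\fRHom_{\cA_X}(\cM^{\cA_X},\cL),
\end{equation*}
which is the claim.

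I expect the main obstacle to be the projection-formula step, i.e.\ justifying cleanly that $\mathrm{R}\pi_\ast\fRHom_{\pi^{-1}\cR_\Lambda}(\pi^{-1}\cM,\cL)$ computes $\fRHom_{\cR_\Lambda}(\cM,\pi_\ast\cL)$ rather than merely mapping to it. This requires knowing that $\pi^{-1}$ commutes with $\fRHom$ out of a coherent module (which holds because locally $\cM$ has a resolution by free $\cR_\Lambda$-modules of finite rank), and that $\mathrm{R}\pi_\ast$ of the resulting complex of $\pi^{-1}\cR_\Lambda$-modules can be identified — using that $\cL$ is simple along $\Lambda$ so its restriction to each fiber $\pi^{-1}(x)$ is an invertible sheaf on a contractible (or at least cohomologically trivial) fiber — with the $\cR_\Lambda$-complex obtained directly on $\Lambda$. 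The remaining steps (the adjunction and the exactness in hypothesis (i)) are formal once this identification is in place; the coherence of $\cR_\Lambda$ and of $\cM$ guarantees all the $\fRHom$'s live in the bounded derived categories where these manipulations are legitimate.
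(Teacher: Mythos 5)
Your proposal is correct and follows essentially the same route as the paper's proof, which is exactly the chain $\mathrm{R}\pi_\ast\fRHom_{\cA_X}(\cM^{\cA_X},\cL)\simeq\mathrm{R}\pi_\ast\fRHom_{\pi^{-1}\cR_\Lambda}(\pi^{-1}\cM,\cL)\simeq\fRHom_{\cR_\Lambda}(\cM,\mathrm{R}\pi_\ast\cL)\simeq\fRHom_{\cR_\Lambda}(\cM,\cL_\Lambda)$ read in the opposite direction: the extension-of-scalars adjunction combined with exactness from hypothesis (i), the standard $(\pi^{-1},\mathrm{R}\pi_\ast)$ adjunction for $\fRHom$ (which needs no projection formula or coherence of $\cM$), and hypothesis (ii). The only point you flag as delicate, identifying $\mathrm{R}\pi_\ast\cL$ with $\pi_\ast\cL$, is likewise passed over implicitly in the paper.
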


\begin{proof}
We have the following isomorphisms.
\begin{align*}
\mathrm{R} \pi_\ast  \fRHom_{\cA_X}(\cM^{\cA_X}, \cL) &\simeq \mathrm{R} \pi_\ast  \fRHom_{\pi^{-1} \cR_\Lambda}(\pi^{-1} \cM, \cL) \\
& \simeq \fRHom_{\cR_\Lambda}(\cM, \mathrm{R} \pi_\ast \cL)\\
& \simeq \fRHom_{\cR_\Lambda}(\cM, \cL_\Lambda).
\end{align*}
\end{proof}

We prove a flatness criterion that we will use to study sheaves of solutions.

\begin{theorem}\label{thm:flatcrit}
Let $X$ be a complex manifold endowed with a DQ-algebra $\cA_X$ and let $\cB_X$ be a $\cB_X$-coherent $\C[\hbar]_X$sub-algebra of $\cA_X$. Assume that $\cA_X / \hbar \cA_X$ is flat over $\cB_X / \hbar \cB_X$. Then, $\cA_X$ is flat over $\cB_X$.
\end{theorem}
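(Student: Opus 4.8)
The plan is to reduce the flatness statement to a local, $\hbar$-adic argument, using the completeness of the sheaves involved and the classical criterion for flatness over complete filtered rings. First I would note that flatness is a local question, so I may assume $\cA_X = (\cO_X^\hbar, \star)$ is a star-algebra and that $\cB_X$ is a coherent $\C[\hbar]_X$-subalgebra with $\cB_X$ itself $\hbar$-complete after $\hbar$-completion (or work with $\C^\hbar_X$ and $\hbar$-adic completeness directly). The key structural facts are: both $\cA_X$ and $\cB_X$ are $\hbar$-torsion-free and $\hbar$-adically complete; $\cA_X / \hbar^n \cA_X$ is built up from successive extensions of $\cA_X/\hbar\cA_X \simeq \cO_X$, and likewise for $\cB_X$; and by hypothesis $\cO_X = \cA_X/\hbar\cA_X$ is flat over $\cB_X/\hbar\cB_X$.

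The main step is to prove that $\cA_X / \hbar^n \cA_X$ is flat over $\cB_X / \hbar^n \cB_X$ for every $n \geq 1$. I would do this by induction on $n$. The case $n=1$ is the hypothesis. For the inductive step, I would use the exact sequences
\begin{equation*}
0 \to \hbar^{n-1}\cA_X/\hbar^n \cA_X \to \cA_X/\hbar^n\cA_X \to \cA_X/\hbar^{n-1}\cA_X \to 0
\end{equation*}
together with the identification $\hbar^{n-1}\cA_X / \hbar^n \cA_X \simeq \cA_X/\hbar\cA_X = \cO_X$ (valid because $\cA_X$ has no $\hbar$-torsion), and similarly for $\cB_X$; then a standard diagram chase with the long exact $\Tor$-sequence, using that both the sub and the quotient in the sequence for $\cA_X$ are flat over the corresponding quotients of $\cB_X$ and that $\hbar^{n-1}\cB_X/\hbar^n\cB_X \otimes_{\cB_X/\hbar^{n-1}\cB_X}(-)$ behaves well, gives flatness of $\cA_X/\hbar^n\cA_X$ over $\cB_X/\hbar^n\cB_X$. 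The delicate point here is to check that tensoring the $\cA_X$-sequence over $\cB_X/\hbar^n\cB_X$ with an arbitrary $\cB_X/\hbar^n\cB_X$-module does not introduce $\Tor_1$; this is where the $n=1$ flatness and the torsion-freeness in $\hbar$ are used essentially, and it is the heart of the argument.

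Once flatness modulo every power of $\hbar$ is established, I would pass to the limit: for a finitely generated $\cB_X$-module $N$ (enough, since $\cB_X$ is coherent, so checking $\Tor_1^{\cB_X}(\cA_X, N) = 0$ on finitely presented modules suffices, and actually on the structure sheaf of finitely generated ideals), one has $\cA_X \otimes_{\cB_X} N$ is $\hbar$-adically complete and its reduction mod $\hbar^n$ is $\cA_X/\hbar^n\cA_X \otimes_{\cB_X/\hbar^n\cB_X} N/\hbar^n N$, which is flat in the appropriate sense; the local flatness criterion over the $\hbar$-adically complete Noetherian (coherent) base $\cB_X$ — in the form: a complete module that is flat modulo each $\hbar^n$ and has no $\hbar$-torsion is flat — then yields that $\cA_X$ is flat over $\cB_X$. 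Here I would invoke the $\hbar$-complete version of the local criterion for flatness (as in the standard references on formal/adic geometry, adapted to sheaves of rings, exactly as such arguments appear in \cite{KS3}), checking the needed finiteness: coherence of $\cB_X$ guarantees that $\Tor_1^{\cB_X}(\cA_X, -)$ vanishing need only be tested on coherent modules, and the mod-$\hbar^n$ statements together with completeness close the argument. The main obstacle I anticipate is the careful bookkeeping in the inductive step, specifically verifying the vanishing of the relevant $\Tor_1$ when tensoring the defining short exact sequences — ensuring one genuinely uses coherence of $\cB_X$, $\hbar$-torsion-freeness, and the base-case flatness, rather than something stronger — and correctly packaging the passage to the $\hbar$-adic limit at the level of sheaves.
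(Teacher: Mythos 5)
Your overall architecture is different from the paper's, and the difference matters: you do a d\'evissage on the powers of $\hbar$ in the \emph{rings} (proving $\cA_X/\hbar^n\cA_X$ is flat over $\cB_X/\hbar^n\cB_X$ for all $n$, then passing to the limit), whereas the paper does a d\'evissage on the coherent \emph{test module} $\cN$ (splitting it into its $\hbar$-torsion part and an $\hbar$-torsion-free part). Your step 1 is fine: since $\hbar$ is a central non-zero-divisor in both $\cA_X$ and $\cB_X$, the induction on $n$ via the sequences $0 \to \hbar^{n-1}\cA_X/\hbar^n\cA_X \to \cA_X/\hbar^n\cA_X \to \cA_X/\hbar^{n-1}\cA_X \to 0$ is the standard nilpotent local criterion and goes through.

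The gap is in the passage to the limit, which is exactly the hard part of the theorem and which you discharge by ``invoking the $\hbar$-complete version of the local criterion for flatness.'' That criterion does not apply here as stated: its standard proofs require the base to be Noetherian (Artin--Rees) and the module to be finite over a Noetherian $I$-adically complete/separated algebra, whereas $\cB_X$ is only assumed coherent (its stalks need not be Noetherian) and $\cA_X$ is in no sense finite over $\cB_X$. Knowing that $\cA_X/\hbar^n\cA_X$ is flat over $\cB_X/\hbar^n\cB_X$ for every $n$ does \emph{not} formally yield $\fTor_1^{\cB_X}(\cA_X,\cN)=0$: what it gives, via the base-change spectral sequence, is that $\cO_X\te_{\cA_X}\fTor_1^{\cB_X}(\cA_X,\cN)$ injects into $\cA_X/\hbar\cA_X\te_{\cB_X/\hbar\cB_X}\Ker(\hbar\colon\cN\to\cN)$; to conclude you still need (a) the torsion/torsion-free d\'evissage on $\cN$ that your sketch omits, and (b) a Nakayama-type argument to pass from the vanishing of $\cO_X\te_{\cA_X}\fTor_1^{\cB_X}(\cA_X,\cN)$ to the vanishing of $\fTor_1^{\cB_X}(\cA_X,\cN)$ itself, which uses that this Tor sheaf is a \emph{coherent} $\cA_X$-module (because $\cB_X$ is coherent and $\cN$ is locally finitely presented) together with Nakayama's lemma for coherent modules over the $\hbar$-complete ring $\cA_X$. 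This is precisely how the paper proceeds (computing $\Hn^{-1}(\gr_\hbar(\cA_X\Lte_{\cB_X}\cN))$ for $\cN$ without $\hbar$-torsion, then Nakayama, then treating $\hbar$-torsion modules separately by reduction to $\hbar\cN=0$); note that it never needs flatness modulo $\hbar^n$ for $n\geq 2$, so your step 1, while correct, is not the missing ingredient. As written, your proposal replaces the essential step by an appeal to a theorem whose hypotheses are not satisfied.
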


\begin{proof}
We adapt the proof of \cite[Proposition 5.2.3]{AK}. Since $\cB_X$ is coherent, we just need to prove that for every open set $U \subset X$ and $\cN \in \Mod_\coh(\cB_X|_U)$
\begin{equation*}
\Hn^{-1}(\cA_X|_U \Lte_{\cB_X|_U} \cN)=0.
\end{equation*} 
For the sake of brevity we will omit the restriction to $U$. Since any coherent module is an extension of a module without $\hbar$-torsion by a module of $\hbar$-torsion, it is sufficient to establish the results for these modules. Recall that if $\cM$ is a $\cA_X$-module, we set $\gr_\hbar \cM = \C_X \Lte_{\C[\hbar]_X} \cM$. 
\begin{enumerate}[(a)]
\item Assume $\cN$ has no $\hbar$-torsion. Then,
\begin{align*}
\Hn^{-1}(\gr_\hbar (\cA_X \Lte_{\cB_X}\cN))& \simeq \Hn^{-1}(\cA_X/ \hbar \cA_X \Lte_{\cA_X} \cA_X \Lte_{\cB_X}\cN))\\
                &\simeq \Hn^{-1}(\cA_X/ \hbar \cA_X \Lte_{\cB_X / \hbar \cB_X} \cN/ \hbar \cN).
\end{align*}
By hypothesis, it follows that
\begin{equation*}
\Hn^{-1}(\gr_\hbar (\cA_X \Lte_{\cB_X}\cN)) \simeq 0.
\end{equation*}
Moreover, it follows from Lemma 1.4.2 of \cite{KS3} that we have the exact sequence
\begin{align*}
0\to \cO_X \te_{\cA_X} \fTor_1^{\cB_X}(\cA_X,\cN) \to \Hn^{-1}(\gr_\hbar (\cA_X \Lte_{\cB_X}\cN)) \to \\ 
 \to \fTor_1^{\cA_X}(\cO_X,\Hn^{0}(\cA_X \Lte_{\cB_X} \cN)) \to 0.
\end{align*}
This implies that $\cO_X \te_{\cA_X} \fTor_1^{\cB_X}(\cA_X,\cN) \simeq 0$.
 
Applying the functor $(\cdot) \te_{\cA_X} \fTor_1^{\cB_X}(\cA_X,\cN)$ to the exact sequence
\begin{equation*}
0 \to \hbar \cA_{X}  \to  \cA_X \to \cA_X/ \hbar \cA_X  \to 0, 
\end{equation*}
we get
\begin{equation*}
\hbar \cA_{X} \te_{\cA_X} \fTor_1^{\cB_X}(\cA_X,\cN) \to \fTor_1^{\cB_X}(\cA_X,\cN)\to 0.
\end{equation*}
It follows from Nakayama Lemma (cf. \cite[Lemma 1.2.2]{KS3}) that
\begin{equation*}
\fTor_1^{\cB_X}(\cA_X,\cN)=0.
\end{equation*}

\item Assume that $\cN$ is of $\hbar$-torsion. Since $\cN$ is coherent, there exists $N \in \N^\ast$ such that $\hbar^N \cN=0$. 
Using the exact sequence
\begin{equation*}
0 \to \hbar \cN \to \cN \to \cN / \hbar \cN \to 0
\end{equation*}
it follows by induction on $N$ that it is sufficient to establish that 
\[
\Hn^{-1}(\cA_X \Lte_{\cB_X} \cN)=0
\]
under the assumption that $\hbar^N \cN=0$ with $N=1$.

\item Assume that $\hbar \cN=0$. Then,
\begin{align*}
\cA_X \Lte_{\cB_X} \cN &\simeq  \cA_X \Lte_{\cB_X} (\cB_X/ \hbar \cB_X) \Lte_{\cB_X/ \hbar \cB_X} \cN\\
                         & \simeq \cA_X / \hbar \cA_X \Lte_{\cB_X/ \hbar \cB_X} \cN.
\end{align*}
Since $\cA_X/ \hbar \cA_X$ is flat over $\cB_X/ \hbar \cB_X$, it follows that $\Hn^{-1}(\cA_X \Lte_{\cB_X} \cN)=0$.
\end{enumerate}
\end{proof}

\subsubsection{Polarization for the cotangent bundle of a complex manifold}

Let $M$ be a complex manifold and $X:=T^\ast M$ be the cotangent bundle of $M$ and $\pi:X \to M$ be the projection on the base. An example of polarization on $(X,\chW_X(0))$ is given by the following data:

\begin{enumerate}[(i)]
\item The projection $\pi:X \to M$,
\item The Lagrangian immersion provided by the zero section i.e.  $\iota \colon M \to X$, $p \mapsto (p,0)$,
\item $\cL$ is the quotient of $\chW_X(0)$ by the left ideal $\cI$ of $\chW_X(0)$ generated by $\hbar (\pi^{-1}\Theta_M)$ where $\Theta_M$ is the sheaf of holomorphic vector fields on $M$.
\end{enumerate} 

Identifying $\cL$ and $\cO_M^\hbar$, it follows that if $x$ is a local coordinate system on $M$ and $(x_1,\ldots,x_n,u_1, \ldots,u_n)$ is the associated symplectic coordinate system on $X$ the action of $\chW_X(0)$ on $\cO_M^\hbar$ is given by 
\begin{align*}
x_i \cdot f &=x_i f,\\
u_i \cdot f&= \hbar \partial_{x_i} f
\end{align*}
which agree with the usual quantization rules for spectral curves in the case where $X=T^\ast \C$ (cf. for instance \cite{Norbury}).

\subsubsection{Polarization for $(\C^\ast \times \C^\ast, (dx_1 \wedge d x_2) /(x_1 x_2))$}\label{subsub:cetcet}

We start by constructing a star-algebra $\cA_X$ on $X=(\C^\ast \times \C^\ast, (dx_1 \wedge d x_2) /(x_1 x_2))$ well-suited to study the quantization of the $A$-polynomial. We will also specify a polarization of $(X, \cA_X)$.

Consider the complex surface $X=\C^\ast \times \C^\ast$ with coordinate system $(x_1,x_2)$ and symplectic form $\frac{dx_1 \wedge dx_2}{x_1 x_2}$. We illustrate the use of Proposition \ref{prop:makestaralg}. We define the following sections of $\cD_X^\hbar$.
\begin{equation*}
\begin{array}{lcl}
A_1=x_1 && A_2=x_2 e^{\hbar x_1 \partial_{x_1}}\\
B_1=x_1 e^{\hbar x_2 \partial_{x_2}}                        && B_2=x_2.
\end{array}
\end{equation*}
This sections satisfy condition \eqref{eq:quantcond}. Thus, by Proposition \ref{prop:makestaralg}, we have defined a star-algebra $\cA_X$ on $\C^\ast \times \C^\ast$. This star-algebra is in fact the one given in Example \ref{ex:expexp}.

We now specify the data of a polarization on $(X,\cA_X)$ i.e.

\begin{enumerate}[(i)]
\item the projection $\pi_1: X \to \C^\ast, \; (x_1,x_2) \mapsto x_1$,

\item The Lagrangian immersion given by $\iota \colon \C^\ast \to X, \; x \mapsto (x,1)$,

\item $\cL$ is the quotient of $\cA_X$ by the left ideal $\cI$ of $\cA_X$ generated by the section $x_2 -1$.
\end{enumerate}

Writing $\Lambda$ for the Lagrangian submanifold of $X$ defined by $\lbrace x_{2}=1 \rbrace$ and identifying $\cL$ and $\cO_{\Lambda}^\hbar$, the action of $\cA_X$ on $\cO_{\Lambda}^\hbar$ is given by 
\begin{align*}
x_1 \cdot f &=x_1 f,\\
x_2 \cdot f&= e^{\hbar x_1 \partial_{x_1}} f
\end{align*}
which are the usual quantization rules for coordinate on $\C^\ast \times \C^\ast$ (cf. \cite{Gu,dim} and  \cite{Gulec,Norbury} for surveys).

\subsubsection{Polarizations for $(\C^\ast \times \C, (dx_1 \wedge d x_2) / x_1)$}\label{subsub:ccetoile}

Contrary to the preceding examples in the case of the variety $(\C^\ast \times \C, (dx_1 \wedge d x_2) / x_1)$, there are several natural choices of quantizations and polarizations that correspond respectively to quantum curves encountered in the study of Hurwitz numbers \cite{MulHur} and Gromov-Witten invariants \cite{DNMPS}. We defer the study of the relation between the different quantizations and polarizations to future works.

As in the preceding cases, we start by constructing the star-algebras for which we will specify a polarization.

Consider the complex surface $X=\C^\ast \times \C$ with coordinnate system $(x_1,x_2)$ and symplectic form $\frac{dx_1 \wedge dx_2}{x_1}$. We define the following section of $\cD_X^\hbar$.
\begin{equation*}
\begin{array}{lcl}
A_1=x_1 && A_2=\hbar x_1 \partial_{x_1}+x_2\\
B_1=x_1 e^{\hbar \partial_{x_2}}                        && B_2=x_2.
\end{array}
\end{equation*}
A straightforward computation show that these sections satisfy condition \eqref{eq:quantcond}. Thus, Proposition \ref{prop:makestaralg} ensured the existence of a star-algebra quantizing $X$. The star-product of this star-algebra is the one of Example \ref{ex:affexp} i.e.
\begin{equation*}
f \star g = \sum_{k \geq 0} \left( \dfrac{\hbar^{k}}{k!} \left(\partial_{x_2}^{k} f \right) \left( \sum_{l=0}^k x_1^l \mathcal{S}_k^{(l)} \partial_{x_1}^{l} g \right)  \right).
\end{equation*}

\begin{enumerate}[A)]
\item We now specify a polarization on $(X,\cA_X)$:

\begin{enumerate}[(i)]
\item the projection $\pi_1:\C^\ast \times \C \to \C^\ast$, $(x_1,x_2) \mapsto x_1$

\item The Lagrangian immersion given by $\iota_1: \C^\ast \to X$, $x \mapsto (x,0)$.

\item $\cL_1$ is the quotient of $\cA_X$ by the left ideal $\cI_1$ of $\cA_X$ generated by $x_2$.
\end{enumerate}
Writing $\Lambda_1$ for the Lagrangian submanifold of $X$ defined by $\lbrace x_{2}=0 \rbrace$ and identifying $\cL_1$ and $\cO_{\Lambda_1}^\hbar$, the action of $\cA_X$ on $\cO_{\Lambda_1}^\hbar$ is given by 
\begin{align*}
x_1 \cdot f &=x_1 f,\\
x_2 \cdot f&= \hbar x_1 \partial_{x_1} f
\end{align*}
which are the quantization rules for coordinate on $\C^\ast \times \C$ usually used to study Hurwitz numbers (cf. for instance  \cite{MulHur}).

\item It is also possible to consider the star-algebra $\cA_X^{\opp}$ on $\C^\ast \times \C$. The star-product of this star-algebra is given by
\begin{equation*}
f \circledast g = g \star f= \sum_{k \geq 0} \left( \dfrac{\hbar^{k}}{k!}  \left( \sum_{l=0}^k x_1^l \mathcal{S}_k^{(l)} \partial_{x_1}^{l} f \right)    \left(\partial_{x_2}^{k} g \right) \right).
\end{equation*}

We specify a polarization for $(X,\cA_X^{\opp})$:

\begin{enumerate}[(i)]
\item the projection $\pi_2:\C^\ast \times \C \to \C$, $(x_1,x_2) \mapsto x_2$,

\item The Lagrangian immersion given by $\iota_2: \C \to X$, $x \mapsto (1,x)$,

\item $\cL_2$ is the quotient of $\cA_X^{\opp}$ by the left ideal $\cI_2$ of $\cA_X^{\opp}$ generated by $x_1-1$.
\end{enumerate}

Writing $\Lambda_2$ for the Lagrangian submanifold of $X$ defined by $\lbrace x_{1}-1=0 \rbrace$ and identifying $\cL_2$ and $\cO_{\Lambda_2}^\hbar$, the action of $\cA^{\opp}_X$ on $\cO_{\Lambda_2}^\hbar$ is given by 
\begin{align*}
x_1 \cdot f &= e^{\hbar \partial_{x_2}} f,\\
x_2 \cdot f&= x_2 f
\end{align*}
which are the usual quantization rules for coordinate on $\C^\ast \times \C$ used in the study of Gromov-Witten invariants (cf. for instance \cite{DNMPS}).
\end{enumerate}

\subsection{Some algebras of operators}
Generally, quantum curves are sections of subalgebras of $\cD_\Lambda^\hbar$. This is especially the case for quantum curves quantizing spectral curves associated to Higgs bundles or quantizing the $A$-polynomials or related to Gromow-Witten theory. That is, in situation where the quantum curves are arising from topological recursion. In this subsection, we construct such algebras and study their properties.

The case of quantum curves related Hurwitz numbers is different. Two types of quantum curves should be distinguished those involving derivative in the direction of the deformation parameter $\hbar$ and those who do not. It seems that quantum curves of the second type define section of the star-algebra defined in subsection \ref{subsub:ccetoile} B) whereas quantum curves of the first type correspond to section of quantization algebras (see \cite{AK}). Quantization algebras and DQ-algebras are closely related. We defer to future work the detailed study, from the point of view of DQ-algebras and quantization algebras, of quantum curves appearing in the study of Hurwitz numbers.

\subsubsection{Rees $\cD$-modules}

Several authors have studied the quantization of spectral curves associated to Higgs bundles in terms of modules over the Rees-algebra of differential operators filtered by the order (See for instance \cite{Dij,Mu2,Mul}). We call these modules Rees $\cD$-modules. In this section, we study the relation between DQ-modules and Rees $\cD$-modules.

Let $M$ be a complex manifold and $X:=T^\ast M$ its cotangent bundle. On $M$ we consider the sheaf $\cD_M$ of holomorphic differential operators. For every $j \in \N$, we write $\cD_M(j)$ for the $j^{th}$ piece of the filtration by the order of $\cD_M$ and $\cD_M[\hbar]$ for $\cD_M \te_\C \C[\hbar]$ and $\cD_M[\hbar,\hbar^{-1}]$ for $\cD_M \te_\C \C[\hbar, \hbar^{-1}]$.

\begin{definition}
The Rees algebra of $\cD_M$ is the subsheaf of $\C[\hbar] \te_\C \cD_M$
\begin{equation*}
R(\cD_M)= \bigoplus_{j=0}^{\infty} \hbar^j \cD_M(j).
\end{equation*}
\end{definition}

It follows from the definition of $R(\cD_M)$ that it is an algebra over $\C[\hbar]_M$ and that the inclusion provides a morphism of algebras 
\begin{equation}\label{map:moralg}
R(\cD_M) \hookrightarrow \cD_M[\hbar].
\end{equation}

\begin{lemma}\label{lem:localisation} There is an isomorphism of algebras 
\begin{equation}\label{map:localgrees}
 R(\cD_M) \te_\C \C[\hbar^{-1}]\stackrel{\sim}{\to} \cD_M[\hbar,\hbar^{-1}].
\end{equation}
\end{lemma}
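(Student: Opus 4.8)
The plan is to exhibit an explicit two-sided inverse to the algebra map obtained from \eqref{map:moralg} after inverting $\hbar$, by tracking what happens to the order filtration. First I would observe that since $\C[\hbar^{-1}]$ is flat over $\C[\hbar]$ (it is a localization), applying $(\cdot)\te_{\C[\hbar]}\C[\hbar,\hbar^{-1}]$ to the inclusion $R(\cD_M)\hookrightarrow \cD_M[\hbar]$ gives an injective morphism of $\C[\hbar,\hbar^{-1}]_M$-algebras $R(\cD_M)\te_\C\C[\hbar^{-1}]\to \cD_M[\hbar,\hbar^{-1}]$; the content of the lemma is surjectivity, i.e.\ that every section of $\cD_M[\hbar,\hbar^{-1}]$ already lies in the image.

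The key step is the following local computation. Work in a local coordinate system on $M$; a section $P$ of $\cD_M$ of order $\leq j$ can be written $P=\sum_{|\alpha|\leq j}a_\alpha(x)\partial_x^\alpha$, and then $\hbar^j P=\sum_{|\alpha|\leq j}a_\alpha\,(\hbar^{j-|\alpha|})(\hbar^{|\alpha|}\partial_x^\alpha)$, which displays $\hbar^j P$ as an element of $R(\cD_M)$ since each $\hbar^{|\alpha|}\partial_x^\alpha$ is a homogeneous element of $R(\cD_M)$ of degree $|\alpha|$ and $\hbar^{j-|\alpha|}\in\C[\hbar]$. Consequently, for \emph{any} $Q\in\cD_M$, choosing $j$ at least the order of $Q$, we get $Q=\hbar^{-j}(\hbar^j Q)\in R(\cD_M)\te_\C\C[\hbar^{-1}]$, hence the natural map is surjective onto $\cD_M\subset\cD_M[\hbar,\hbar^{-1}]$; since it is also $\C[\hbar,\hbar^{-1}]$-linear and $\hbar$ is invertible on both sides, it is surjective onto all of $\cD_M[\hbar,\hbar^{-1}]$. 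Combined with injectivity from flatness, \eqref{map:localgrees} is an isomorphism of algebras.

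The main obstacle, such as it is, is purely bookkeeping: one must make sure the map in \eqref{map:localgrees} is unambiguously defined (it is induced by \eqref{map:moralg} followed by the localization $\cD_M[\hbar]\to\cD_M[\hbar,\hbar^{-1}]$, and this is an algebra map because $R(\cD_M)\to\cD_M[\hbar]$ is and localization is functorial), and that the decomposition $R(\cD_M)=\bigoplus_j\hbar^j\cD_M(j)$ is being used correctly — in particular that an element of $\hbar^j\cD_M(j)$ really is the class $\hbar^j P$ with $P$ of order $\leq j$, so that the inverse $Q\mapsto\hbar^{-\ord(Q)}(\hbar^{\ord(Q)}Q)$ is well-defined independently of the (non-unique) choice of $j\geq\ord(Q)$, which follows because multiplying numerator and denominator by a further power of $\hbar$ does not change the element. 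No global or sheaf-theoretic subtlety intervenes since everything is checked on a common local coordinate patch and the isomorphism is natural, hence glues.
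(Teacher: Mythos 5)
Your proposal is correct and follows essentially the same route as the paper: localize the inclusion \eqref{map:moralg} at $\hbar$ to get a monomorphism, then check surjectivity by a local computation. The only cosmetic difference is that the paper verifies surjectivity by noting the image contains the local algebra generators $\cO_M$ and $\Theta_M$ of $\cD_M$, whereas you exhibit each operator directly as $\hbar^{-j}(\hbar^{j}Q)$ with $\hbar^{j}Q\in\hbar^{j}\cD_M(j)\subset R(\cD_M)$; both amount to the same observation.
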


\begin{proof}
Tensoring the morphism \eqref{map:moralg} by $ (\cdot) \te_ \C \C[\hbar^{-1}]$, we obtain the morphism $R(\cD_M) \te_\C \C[\hbar^{-1}]\to \cD_M[\hbar,\hbar^{-1}]$ which is clearly a monomorphism. Locally, one checks that $R(\cD_M) \otimes \C[\hbar^{-1}]$ contains $\cO_M$ and the sheaf of vector fields $\Theta_M$ which implies that the morphism \eqref{map:localgrees} is an epimorphism. 
\end{proof}
We have the following result.
\begin{proposition}[{\cite[Theorem A.34]{KDmod}}] \label{prop:cohRD}
The $\cO_M$-algebra $R(\cD_M)$ is Noetherian.
\end{proposition}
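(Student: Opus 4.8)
The plan is to show that $R(\cD_M)$ is a coherent (hence locally Noetherian) sheaf of rings by working locally and exhibiting it as a suitable subalgebra of a known Noetherian algebra, exactly as one does for $\cD_M$ itself. First I would reduce to a local statement: the question is local on $M$, so we may assume $M$ is an open polydisc with coordinates $(x_1,\dots,x_n)$. On such a chart $\cD_M$ is filtered by the order filtration $\cD_M(j)$, and $R(\cD_M)=\bigoplus_{j\ge 0}\hbar^j\cD_M(j)$ sits inside $\cD_M[\hbar]$. The key structural fact is that $R(\cD_M)$ is itself a filtered ring whose associated graded (for the $\hbar$-adic-type filtration, or equivalently the filtration induced from the order filtration) is a commutative Noetherian ring, namely essentially $\cO_M[\hbar][\xi_1,\dots,\xi_n]$, the polynomial ring over the sections of $\pi_*\cO_X$; indeed $R(\cD_M)/\hbar R(\cD_M)\simeq \cO_M$ and $\gr R(\cD_M)\simeq \cO_M[\hbar,\xi_1,\dots,\xi_n]$ where $\xi_i$ is the symbol of $\hbar\partial_{x_i}$.

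The main steps, in order, would be: (1) establish that on a Stein chart $\gr R(\cD_M)$ is a Noetherian ring — this follows from the Hilbert basis theorem applied to the Noetherian ring $\cO_M(U)$ of sections over a small polydisc (Oka/Cartan), adjoining finitely many commuting variables $\hbar,\xi_1,\dots,\xi_n$; (2) invoke the standard lifting principle that a filtered ring whose associated graded is (left and right) Noetherian is itself Noetherian — this is the classical argument (as in the proof that $\cD_M(U)$ is Noetherian) using that any ascending chain of (left) ideals, after passing to graded ideals, stabilizes, and then lifts back; (3) since this holds on a basis of open sets, conclude that $R(\cD_M)$ is a Noetherian sheaf of $\cO_M$-algebras in the appropriate coherent sense. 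Alternatively, one can deduce the result directly from Noetherianity of $\cD_M$: the Rees construction of a Noetherian filtered ring with respect to a good (exhaustive, positively indexed, ring-generated-in-degree-one) filtration is always Noetherian, and the order filtration on $\cD_M$ has precisely these properties. The cleanest route is to cite \cite[Theorem A.34]{KDmod} as the reference, but the self-contained argument is the Rees-of-Noetherian-filtered-ring lemma combined with the known Noetherianity of $\cD_M$.

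The main obstacle is purely the analytic subtlety rather than the algebra: one must be careful that "Noetherian" for a sheaf of rings on a complex manifold is meant in the sense that sections over a fundamental system of (Stein) opens form Noetherian rings and that coherence is preserved, since the global sheaf of rings on a non-Stein $M$ need not have Noetherian global sections. So the real work is checking that the local sections $\Gamma(U,R(\cD_M))$ over small polydiscs are Noetherian rings, which is where the Rees-filtration argument and the Noetherianity of $\Gamma(U,\cO_M)$ enter. Once that is in place, the global statement is formal. I expect this localization-and-graded-lifting step to be the only point requiring genuine care; the rest is bookkeeping with the filtration.
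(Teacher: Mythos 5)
The paper offers no proof of this proposition: it is stated with a citation to \cite[Theorem~A.34]{KDmod} and nothing more, so there is no in-text argument to compare your proposal against. At the level of strategy your sketch follows the standard literature route (view $R(\cD_M)$ as a Rees ring, pass to an associated graded that is commutative and ``polynomial,'' then lift Noetherianity back), which is broadly what the cited reference does.

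However, there is a genuine error precisely at the step you single out as the ``real work.'' You assert that $\Gamma(U,\cO_M)$ is a Noetherian ring for $U$ a small Stein polydisc, and you plan to apply the Hilbert basis theorem to conclude that $\Gamma(U,\gr R(\cD_M))$ is Noetherian. This is false: for any open polydisc $U$, the ring $\cO_M(U)$ is \emph{not} Noetherian (the ideals of functions vanishing on the tails of a discrete sequence accumulating at $\partial U$ form a strictly increasing chain). Oka's coherence theorem and Cartan's Theorems A and B give coherence of $\cO_M$ and cohomology vanishing, not Noetherianity of rings of sections. The correct local input is that the \emph{stalks} $\cO_{M,x}$ are Noetherian (Weierstrass preparation plus induction), and the notion of a ``Noetherian sheaf of rings'' in Kashiwara's appendix is a stalk-wise and coherence condition, not a condition on sections over a basis of opens. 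So your ``main obstacle'' paragraph, read as written, asserts the false fact it warns against, and the reduction to $\Gamma(U,R(\cD_M))$ over polydiscs would not close. A corrected version must work with stalks (or with germs along compact Stein subsets, where Frisch-type Noetherianity is available) and then establish coherence of the sheaf $R(\cD_M)$. A smaller slip in the same passage: $R(\cD_M)/\hbar R(\cD_M)\simeq\gr\cD_M$ (locally $\cO_M[\xi_1,\dots,\xi_n]$, the symbol algebra), not $\cO_M$.
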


We endow $X=T^\ast M$ with the DQ-algebra $\chW_X(0)$. Recall that we have a flat morphism of algebras
\begin{equation*}
\pi^{-1} \cD_M \hookrightarrow \chW_X
\end{equation*}
such that if $(x_1,\ldots,x_n;u_1,\ldots,u_n)$ is a local symplectic coordinate system on $X$, then $x_i \mapsto x_i$ and $\partial_{x_i} \mapsto \hbar^{-1}u_i$. This induces a morphism of algebras
\begin{equation*}
\pi^{-1} \cD_M[\hbar, \hbar^{-1}] \hookrightarrow \chW_X.
\end{equation*}
By composing the above morphism with morphism \eqref{map:moralg} we get a map
\begin{equation*}
\Psi: \pi^{-1} R(\cD_M) \hookrightarrow \chW_X.
\end{equation*}
It is clear that $\Psi(\pi^{-1}R(\cD_M)) \subset \chW_X(0)$. 

We summarize the situation in the following commutative diagram of morphisms of algebras.
\begin{equation*}
\xymatrix{\pi^{-1} \cD_M[\hbar, \hbar^{-1}] \ar@{^{(}->}[r] & \chW_X\\
                    \pi^{-1}R(\cD_M) \ar@{^{(}->}[u]       \ar@{^{(}->}[r]  & \chW_X(0). \ar@{^{(}->}[u]
}
\end{equation*}

We endow $\chW_X(0)$ with the canonical filtration defined in \eqref{filt:DQalg} i.e.

\begin{equation*}
\cW_X(k)=
\begin{cases}
\hbar^{-k}\cW_X(0) &\mbox{if } k < 0\\
\cW_X(0)           &\mbox{if } k \geq 0.\\
\end{cases}
\end{equation*}

This filtration induces a filtration $(R(\cD_M)(k))_{k \in \Z}$ on $\pi^{-1}R(\cD_M)$ such that $\hbar$ is in degree $-1$ in $\pi^{-1}R(\cD_M)$ and $\hbar \partial_{x_i}$ is in degree zero in $\pi^{-1}R(\cD_M)$.

We denote by $\cO_{[X]}$ the sub-ring of $\cO_X$ the sections of which are polynomial in the fibers. We have 
\begin{equation*}
\chW_X(0) / \chW_X(-1) \simeq \chW_X(0) / \hbar \chW_X(0) \simeq \cO_X
\end{equation*}
and 
\begin{equation*}
R(\cD_M)(0) / R(\cD_M)(-1) \simeq \cO_{[X]}.
\end{equation*}
We recall the following well-known fact. 
\begin{proposition}\label{prop:flat}
The sheaf of $\C$-algebras $\cO_X$ is flat over the sheaf of algebras $\cO_{[X]}$.
\end{proposition}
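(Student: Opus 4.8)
The plan is to prove flatness locally on $M$, where a choice of local coordinates $(x_1,\ldots,x_n)$ on $M$ identifies $\cO_{[X]}$ with the polynomial ring $\cO_M[u_1,\ldots,u_n]$ (polynomials in the fiber coordinates with holomorphic coefficients on $M$) and $\cO_X$ with the sheaf of holomorphic functions on $T^\ast M$, which in the fiber directions are convergent power series. So I must show that $\cO_X$ is flat over the subsheaf $\cO_M[u_1,\ldots,u_n]$. Since the $u_i$ are central and algebraically independent, one reduces by induction on $n$ to the one-variable situation: it suffices to show that for a single fiber variable $u$, the sheaf of functions holomorphic in $u$ is flat over $\cO_M[u]$.

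First I would recall that flatness over a (sheaf of) Noetherian ring can be tested on the stalks and, at each point, via the ideal criterion: a module $N$ over a commutative ring $R$ is flat iff $\Tor_1^R(R/I,N)=0$ for every finitely generated ideal $I$, equivalently iff for every such $I$ the natural map $I\otimes_R N \to N$ is injective. For $R=\cO_{M,p}[u]$, which is Noetherian (it is a polynomial ring over a Noetherian local ring), every ideal is finitely generated, and I would further invoke that flatness of an $R$-module $N$ with $R$ Noetherian can be checked against the finitely many ideals generated by systems related to the variable $u$; concretely, one can reduce to principal ideals generated by monic (in $u$) polynomials together with ideals generated in $\cO_{M,p}$, because $\cO_{M,p}[u]$ is a free, hence flat, $\cO_{M,p}$-module and the fiber of $\cO_X$ at $p$ is obtained from $\cO_{M,p}$ by adjoining convergent power series in $u$. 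The key algebraic input is the Weierstrass division theorem: any germ $f$ holomorphic in $u$ at a point can be divided by a monic polynomial $g(u)\in\cO_{M,p}[u]$ of degree $d$, yielding $f = qg + r$ with $q$ holomorphic and $r$ a polynomial of degree $<d$ in $u$ with holomorphic coefficients. This says precisely that $\cO_X/(g)$ at the relevant point is a free $\cO_M$-module of rank $d$, hence $\cO_X$ is a flat — indeed locally free modulo $(g)$ — extension, and more importantly gives the non-vanishing of multiplication: if $fg \in (g)\cdot(\text{anything})$ then the uniqueness in Weierstrass division forces the expected divisibility, which is what kills $\Tor_1$.

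Concretely the steps are: (1) reduce to a local statement and to the case of one fiber variable by induction, using that $\cO_{[X]}$ is a polynomial ring in the $u_i$ over $\cO_M$ and a tensor product / transitivity argument for flatness; (2) observe $\cO_M[u]$ is Noetherian so the ideal-theoretic criterion for flatness applies, and split ideals of $\cO_M[u]$ into their ``leading'' behavior in $u$; (3) use Weierstrass division over $\cO_{M,p}$ to show that for a monic $g\in\cO_{M,p}[u]$ the multiplication map $\cO_{M,p}[u]/(g) \to \cO_{X,x}/(g\cO_{X,x})$ is an isomorphism onto a free $\cO_{M,p}$-module, and deduce that $\cO_{X,x}$ is faithfully flat, or at least flat, over $\cO_{M,p}[u]$ by checking $\Tor_1^{\cO_{M,p}[u]}(\cO_{M,p}[u]/I,\cO_{X,x})=0$; and (4) conclude flatness of the sheaf $\cO_X$ over $\cO_{[X]}$ by the stalkwise criterion. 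An alternative, perhaps cleaner, route for step (3): $\cO_{X,x}$ is the completion-like convergent analogue and is faithfully flat over $\cO_{M,p}\{u\}$-adic pieces; one can instead cite that $\cO_{X}$ is a flat $\cO_M$-module and that the relative situation $T^\ast M \to M$ makes $\cO_X$ a ``relatively convergent'' sheaf, so the inclusion of polynomials in the fibers into convergent series is flat by the standard fact that $\C\{u\}$ is flat over $\C[u]$ (localization at the maximal ideal $(u)$ followed by faithfully flat completion, then descent) — done relatively over $\cO_M$.

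The main obstacle is step (3): establishing that the convergent-series sheaf $\cO_X$ is genuinely flat over the polynomial subsheaf $\cO_{[X]}$ in the fiber directions, uniformly in the base point. The issue is that $\cO_{X,x}$ is not simply a localization of $\cO_{[X],x}$, so one cannot invoke flatness of localization directly; one needs either Weierstrass division with holomorphic dependence on parameters (valid, but requires care about the domain of convergence shrinking with $p$) or the chain ``$\C[u]\hookrightarrow \C[u]_{(u)}\hookrightarrow \C\{u\}$'', the first map being flat as a localization and the second flat because $\C\{u\}$ is a faithfully flat extension of the Noetherian local ring $\C[u]_{(u)}$ (it sits between it and its completion $\C[[u]]$, and flatness of $\C\{u\}$ over $\C[u]_{(u)}$ is classical), all carried out $\cO_M$-linearly. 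Since the statement is flagged as ``well-known,'' I expect the authors simply cite the relative analogue of $\C[u]\to\C\{u\}$ being flat (e.g.\ via Weierstrass preparation) rather than reproving it, and I would do the same, spelling out only the reduction in step (1).
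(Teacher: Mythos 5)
The paper offers no proof of this proposition at all: it is introduced with ``We recall the following well-known fact'' and used as a black box, so there is nothing to compare your argument against line by line. Your proposal is, in substance, a correct way to establish the fact, and the decisive ingredient is the one you flag as the ``alternative, cleaner route'': check flatness on stalks, factor the stalk map $\cO_{[X],x}=\cO_{M,p}[u_1,\dots,u_n]\to\cO_{X,x}$ through the localization of $\cO_{M,p}[u]$ at the maximal ideal of the point $x=(p,\xi)$ (note: the ideal $(\mathfrak m_p, u-\xi)$, not $(u)$), and observe that this Noetherian local ring and $\cO_{X,x}\simeq\C\{x'-p,u-\xi\}$ have the same $\mathfrak m$-adic completion $\C[[x'-p,u-\xi]]$; faithful flatness of completion for Noetherian local rings then gives flatness of the local map, and precomposing with the flat localization finishes the stalk. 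Two small corrections to your write-up: the induction on the number of fiber variables is unnecessary, since the completion argument handles all $2n$ variables at once; and the Weierstrass-division variant of step (3), which only tests principal ideals generated by monic polynomials, does not by itself verify the ideal criterion for flatness (one must handle arbitrary finitely generated ideals of $\cO_{M,p}[u]$), so it should be regarded as supporting intuition rather than a complete substitute for the localization--completion chain. With those adjustments your argument is a sound proof of what the paper merely cites.
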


\begin{proposition}\label{prop:DQflatDH} 
\begin{enumerate}[(i)]
\item The ring $\chW_X(0)$ is flat over $\pi^{-1}R(\cD_M)$,
\item The algebra $\chW_X$ is flat over $\pi^{-1}\cD_M[\hbar, \hbar^{-1}]$,
\item The algebra $\chW_X$ is flat over $\pi^{-1}R(\cD_M)$.
\end{enumerate}
\end{proposition}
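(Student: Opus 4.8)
The plan is to deduce all three flatness statements from the flatness criterion established in Theorem~\ref{thm:flatcrit}, together with the elementary flatness facts recalled in Propositions~\ref{prop:flat} and~\ref{prop:cohRD}. The first step is to prove~(i). We apply Theorem~\ref{thm:flatcrit} with $\cA_X = \chW_X(0)$ and $\cB_X = \Psi(\pi^{-1}R(\cD_M))$: the latter is a $\C[\hbar]_X$-subalgebra of $\chW_X(0)$, and it is coherent over itself since $R(\cD_M)$ is Noetherian by Proposition~\ref{prop:cohRD} and hence $\pi^{-1}R(\cD_M)$ is a coherent sheaf of rings (pullback of a Noetherian ring along the projection of a vector bundle). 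The hypothesis to check is that $\chW_X(0)/\hbar \chW_X(0)$ is flat over $\pi^{-1}R(\cD_M)/\hbar\,\pi^{-1}R(\cD_M)$. But $\chW_X(0)/\hbar\chW_X(0) \simeq \cO_X$ and, using the filtration identification, the image of $\pi^{-1}R(\cD_M)/\hbar$ in $\cO_X$ is exactly $\cO_{[X]}$, the subring of functions polynomial along the fibers. Hence the needed flatness is precisely the content of Proposition~\ref{prop:flat}, and Theorem~\ref{thm:flatcrit} gives~(i).

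For~(ii), the plan is to reduce to the already-known flatness of $\pi^{-1}\cD_M \hookrightarrow \chW_X$ (recalled in the text just before the statement, where it is asserted that $\pi^{-1}\cD_M \hookrightarrow \chW_X$ is flat). Flatness is preserved under the localization $(\cdot)\te_\C \C[\hbar,\hbar^{-1}]$, so from the flatness of $\chW_X$ over $\pi^{-1}\cD_M$ one obtains flatness of $\chW_X = \chW_X \te_{\C[\hbar]}\C[\hbar,\hbar^{-1}]$ over $\pi^{-1}\cD_M[\hbar,\hbar^{-1}]= \pi^{-1}\cD_M \te_\C \C[\hbar,\hbar^{-1}]$. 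Alternatively, one can invoke~(i) together with Lemma~\ref{lem:localisation}: localizing the flat morphism $\pi^{-1}R(\cD_M)\to \chW_X(0)$ at $\hbar^{-1}$ yields that $\chW_X = \chW_X(0)\te_\C\C[\hbar^{-1}]$ is flat over $\pi^{-1}R(\cD_M)\te_\C\C[\hbar^{-1}] \simeq \pi^{-1}\cD_M[\hbar,\hbar^{-1}]$; either route works.

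Finally,~(iii) follows by composing flat morphisms. We have the morphism $\pi^{-1}R(\cD_M)\hookrightarrow \chW_X(0)$, which is flat by~(i), and the morphism $\chW_X(0)\hookrightarrow \chW_X = \chW_X(0)\te_\C \C[\hbar^{-1}]$, which is flat because it is a localization. The composite $\pi^{-1}R(\cD_M)\hookrightarrow \chW_X$ is therefore flat, since a composition of flat ring morphisms is flat.

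The only genuinely delicate point in this argument is verifying the hypothesis of Theorem~\ref{thm:flatcrit} in step~(i): one must be careful to identify correctly the associated graded picture, namely that the reduction of the subalgebra $\Psi(\pi^{-1}R(\cD_M))$ modulo $\hbar$ inside $\cO_X = \chW_X(0)/\hbar\chW_X(0)$ is exactly $\cO_{[X]}$ and not something larger or smaller. This uses that under $\Psi$ the generator $\hbar\partial_{x_i}$ has order zero and maps to the fiber coordinate $u_i$ modulo $\hbar$, so that $R(\cD_M)(0)/R(\cD_M)(-1)\simeq \cO_{[X]}$ maps isomorphically onto the subring of $\cO_X$ generated over $\pi^{-1}\cO_M$ by the $u_i$; once this identification is in hand, Proposition~\ref{prop:flat} closes the gap and the rest is formal. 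The coherence bookkeeping (that $\pi^{-1}R(\cD_M)$ is a coherent ring, so that Theorem~\ref{thm:flatcrit} applies) is routine given Proposition~\ref{prop:cohRD}.
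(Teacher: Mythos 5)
Your part (i) is exactly the paper's argument (apply Theorem~\ref{thm:flatcrit}, with coherence supplied by Proposition~\ref{prop:cohRD} and the flatness of the reduction mod $\hbar$ supplied by Proposition~\ref{prop:flat}), and your identification of $R(\cD_M)(0)/R(\cD_M)(-1)$ with $\cO_{[X]}$ is precisely the point the paper prepares before the statement. For (iii) you compose $\pi^{-1}R(\cD_M)\hookrightarrow \chW_X(0)\hookrightarrow\chW_X$, whereas the paper composes through $\pi^{-1}\cD_M[\hbar,\hbar^{-1}]$; both are valid, and yours has the small advantage of not needing the (unproved but easy) flatness of $\cD_M[\hbar,\hbar^{-1}]$ over $R(\cD_M)$.

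The one genuine problem is your \emph{primary} route for (ii). From the flatness of $\chW_X$ over $\pi^{-1}\cD_M$ you cannot conclude flatness over the larger ring $\pi^{-1}\cD_M[\hbar,\hbar^{-1}]$ by ``localization'': the base change $\chW_X\otimes_{\pi^{-1}\cD_M}\pi^{-1}\cD_M[\hbar,\hbar^{-1}]\simeq \chW_X\otimes_\C\C[s,s^{-1}]$ adjoins a \emph{new} central variable and is not $\chW_X$; identifying $s$ with the element $\hbar$ already present in $\chW_X$ is a quotient, and flatness of a module over a subring never by itself implies flatness over the bigger ring (compare $\C=\C[t]/(t)$, flat over $\C$ but not over $\C[t]$). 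So that sentence does not prove (ii). Fortunately your stated alternative --- localize the flat inclusion $\pi^{-1}R(\cD_M)\hookrightarrow\chW_X(0)$ of part (i) at the central multiplicative set $\{\hbar^n\}$, using Lemma~\ref{lem:localisation} to identify the localization of $R(\cD_M)$ with $\cD_M[\hbar,\hbar^{-1}]$ --- is exactly the paper's proof and is correct, since here $\chW_X$ really is $\chW_X(0)\otimes_{\pi^{-1}R(\cD_M)}\pi^{-1}\cD_M[\hbar,\hbar^{-1}]$. Keep that route and drop, or repair, the first one.
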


\begin{proof} 
\begin{enumerate}[(i)]
\item It follows from Proposition \ref{prop:cohRD} and Proposition \ref{prop:flat} that the hypothesis of Theorem \ref{thm:flatcrit} are satisfied which proves the claim.
\item $\chW_X$ is the localization of $\chW_X(0)$ with respect to $\hbar$ and $\pi^{-1}\cD_M[\hbar, \hbar^{-1}]$ is the localization of $\pi^{-1}R(\cD_M)$ with respect to $\hbar$. As $\chW_X(0)$ is flat over $\pi^{-1}R(\cD_M)$ by (i), the result follows.
\item This follows from (ii) and from the fact that $\pi^{-1}\cD_M[\hbar, \hbar^{-1}]$ is flat over $R(\cD_M)$.
\end{enumerate}
\end{proof}

We set
\begin{align*}
(\cdot)^{W_0}:&\Mod(R(\cD_M)) \to \Mod(\chW_X(0)) \\ 
&\cM \mapsto \chW_X(0)\te_{\pi^{-1}R(\cD_M)} \pi^{-1}\cM.
\end{align*}

\begin{corollary}\label{cor:SolRD}
Let $\cM \in \Mod_{\coh}(R(\cD_M))$. Then,
\begin{equation*}
\fRHom_{R(\cD_M)}(\cM, \cO^\hbar_{M}) \simeq \mathrm{R} \pi_\ast  \fRHom_{\chW_X(0)}(\cM^{W_0}, \cO^\hbar_{M}).
\end{equation*}
\end{corollary}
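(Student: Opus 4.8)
The statement is a direct application of Proposition~\ref{prop:compsol}, so the plan is to exhibit the polarization and the compatible pair to which that proposition applies, and then quote it. First I would set $\Lambda = M$, take $\pi \colon X = T^\ast M \to M$ to be the bundle projection (whose fibers are Lagrangian), take $\iota \colon M \to X$ to be the zero section, and let $\cL$ be the simple $\chW_X(0)$-module $\chW_X(0)/\cI$ with $\cI$ generated by $\hbar(\pi^{-1}\Theta_M)$, exactly as in the subsection on the polarization for the cotangent bundle; under the identification $\cL \simeq \cO_M^\hbar$ this is the polarization $\cP$ we use. On the $\Lambda$-side I would set $\cR_\Lambda = R(\cD_M)$, $\phi = \Psi \colon \pi^{-1}R(\cD_M) \hookrightarrow \chW_X(0)$ the monomorphism already constructed, and $\cL_\Lambda = \cO_M^\hbar$ viewed as an $R(\cD_M)$-module (the action factoring through $R(\cD_M) \to \cD_M[\hbar]$ and the usual action of differential operators, $\hbar$ acting as the formal parameter). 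Note $R(\cD_M)$ is coherent by Proposition~\ref{prop:cohRD}, and $(\cdot)^{W_0} = (\cdot)^{\cA_X}$ for this data.

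Next I would verify the two compatibility conditions. Exactness of $\cM \mapsto \chW_X(0)\te_{\pi^{-1}R(\cD_M)} \pi^{-1}\cM$ on coherent modules is immediate from Proposition~\ref{prop:DQflatDH}(i), which gives that $\chW_X(0)$ is flat over $\pi^{-1}R(\cD_M)$ (and $\pi^{-1}$ is exact). For the second condition I must identify $\pi_\ast \cL$ with $\cL_\Lambda = \cO_M^\hbar$ as $R(\cD_M)$-modules. Here $\cL = \chW_X(0)/\cI$; its pushforward $\pi_\ast \cL$ is computed fiberwise, and along each fiber $\chW_X(0)$ quotiented by the ideal generated by $\hbar\,\partial_{x_i}$ (for a local coordinate system) collapses the fiber variables, leaving $\cO_M^\hbar$. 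The $R(\cD_M)$-action transported through $\Psi$ sends $x_i$ to multiplication by $x_i$ and $\hbar\partial_{x_i}$ to $\hbar\partial_{x_i}$ acting on $\cO_M^\hbar$, which is precisely the $R(\cD_M)$-module structure on $\cL_\Lambda$. So $\pi_\ast\cL \simeq \cL_\Lambda$ as $R(\cD_M)$-modules.

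With both conditions checked, Proposition~\ref{prop:compsol} applied to any $\cM \in \Mod_\coh(R(\cD_M))$ gives
\begin{equation*}
\fRHom_{R(\cD_M)}(\cM, \cO^\hbar_M) \simeq \mathrm{R}\pi_\ast \fRHom_{\chW_X(0)}(\cM^{W_0}, \cO^\hbar_M),
\end{equation*}
using that $\cM^{\cA_X} = \cM^{W_0}$ and $\cL = \cL_\Lambda = \cO_M^\hbar$ under the identifications. This is exactly the claimed isomorphism.

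\textbf{Main obstacle.} The only nontrivial point is the identification $\pi_\ast\cL \simeq \cL_\Lambda$ as sheaves of $R(\cD_M)$-modules, i.e.\ checking that pushing forward the simple module $\chW_X(0)/\cI$ along $\pi$ really produces $\cO_M^\hbar$ with the expected action and that this identification is natural (independent of coordinates). This requires a local computation with the canonical quantization $\chW_X(0)$: writing a section as $\sum_k f_k(x,u)\hbar^k$, working out the left ideal generated by $\hbar\partial_{x_i} \leftrightarrow u_i$, and seeing that the quotient's global sections over $\pi^{-1}(U)$ for Stein $U$ reduce to $\cO_M^\hbar(U)$. Everything else is bookkeeping: flatness is quoted, coherence of $R(\cD_M)$ is quoted, and the final isomorphism is just Proposition~\ref{prop:compsol} with the dictionary of notation.
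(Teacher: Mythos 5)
Your proposal is correct and follows exactly the paper's route: the paper's own proof is the one-line ``This follows from Proposition~\ref{prop:compsol},'' and you have simply made explicit the data (the cotangent-bundle polarization, $\cR_\Lambda = R(\cD_M)$, $\phi=\Psi$, $\cL_\Lambda = \cO_M^\hbar$) and the verification of the two compatibility conditions (exactness via Proposition~\ref{prop:DQflatDH}(i) and the identification $\pi_\ast\cL \simeq \cO_M^\hbar$) that the paper leaves implicit.
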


\begin{proof}
This follows from Proposition \ref{prop:compsol}.
\end{proof}

\subsubsection{Scaling operators} According to \cite{Gu}, the quantum curves appearing in the quantization of the $A$-polynomial are scaling operators. In this sub-section, we introduce the algebra formed by such operators, study its properties and relate it to DQ-algebras.

Consider $\C^\ast$ with the coordinate $x$. Let $\cS_{\C^\ast}$ be the sub-algbera of $\cD_{\C^\ast}^\hbar$ generated by $\cO_{\C^\ast}^\hbar$, $e^{\hbar x \partial_x}$ and $e^{-\hbar x \partial_x}$. Let $\cS^{+}_{\C^\ast}$ be the sub-algbera of $\cD_{\C^\ast}^\hbar$ generated by $\cO_{\C^\ast}^\hbar$ and $e^{\hbar x \partial_x}$. We write $S$ for the operator $e^{\hbar x \partial_x}$ and since $e^{\hbar x \partial_x} e^{-\hbar x \partial_x}=e^{-\hbar x \partial_x}e^{\hbar x \partial_x}=\id$, the operator $e^{-\hbar x \partial_x}$ is naturally denoted by $S^{-1}$. Finally, we denote by $\theta_S:\cS^{+}_{\C^\ast} \to \cS_{\C^\ast}$ the inclusion of $\cS^{+}_{\C^\ast}$ into $\cS_{\C^\ast}$.

\begin{lemma} \label{lem:relation}
Let $f \in \cO_{\C^\ast}^\hbar$, then 
\begin{enumerate}[(i)]
\item $S \circ f=S(f) \circ S$,
\item $S^{-1} \circ f=S^{-1}(f) \circ S^{-1}$.
\end{enumerate}
\end{lemma}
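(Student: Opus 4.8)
\textbf{Proof plan for Lemma \ref{lem:relation}.}

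The plan is to prove the two identities by a direct computation using the definition $S = e^{\hbar x \partial_x}$ as a formal power series in $\hbar$ acting on $\cO_{\C^\ast}^\hbar$. The key observation is that $x\partial_x$ is a derivation of $\cO_{\C^\ast}$, so $e^{\hbar x \partial_x}$ is an algebra automorphism of $\cO_{\C^\ast}^\hbar$; the identity in (i) then expresses exactly the intertwining relation between this automorphism and the multiplication operators. First I would reduce (i) to checking the equality of the two operators on an arbitrary $g \in \cO_{\C^\ast}^\hbar$: the left-hand side sends $g$ to $S(fg)$, while the right-hand side sends $g$ to $S(f)\cdot S(g)$. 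Thus (i) is equivalent to the assertion that $S$ is a ring homomorphism, i.e. $S(fg) = S(f)S(g)$ for all $f, g$.

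To establish $S(fg) = S(f) S(g)$, I would expand $S = \sum_{k \geq 0} \frac{\hbar^k}{k!}(x\partial_x)^k$ and invoke the Leibniz rule: since $D := x\partial_x$ is a derivation, $D^k(fg) = \sum_{j=0}^k \binom{k}{j} D^j(f)\, D^{k-j}(g)$. Substituting this into the series for $S(fg)$ and reindexing gives
\begin{equation*}
S(fg) = \sum_{k \geq 0} \frac{\hbar^k}{k!} \sum_{j=0}^k \binom{k}{j} D^j(f)\, D^{k-j}(g) = \left(\sum_{p \geq 0} \frac{\hbar^p}{p!} D^p(f)\right)\left(\sum_{q \geq 0} \frac{\hbar^q}{q!} D^q(g)\right) = S(f)\, S(g),
\end{equation*}
which is the desired identity; everything is well-defined because the $\hbar$-adic topology makes these rearrangements legitimate on $\cO_{\C^\ast}^\hbar$. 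This proves (i).

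For (ii), I would note that $S^{-1} = e^{-\hbar x\partial_x}$ is defined by the same construction with $\hbar$ replaced by $-\hbar$, so it is likewise a ring automorphism of $\cO_{\C^\ast}^\hbar$, inverse to $S$ as recorded in the text; hence the identical argument gives $S^{-1}\circ f = S^{-1}(f)\circ S^{-1}$. (Alternatively one can derive (ii) formally from (i) by applying $S^{-1}$ on both sides of (i) with $f$ replaced by $S^{-1}(f)$.) I do not anticipate a genuine obstacle here; the only point requiring a modicum of care is justifying the interchange of the two infinite sums, which is handled by working in the $\hbar$-adically complete ring $\cO_{\C^\ast}^\hbar$ where these manipulations converge termwise in each $\hbar$-degree.
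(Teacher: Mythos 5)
Your proof is correct and takes essentially the same route as the paper: the heart of both arguments is the Leibniz rule for the $k$-th power of a derivation, which shows that $S=e^{\hbar x\partial_x}$ is a ring automorphism and hence intertwines with multiplication operators as claimed. The only cosmetic difference is that the paper first performs the change of variable $x=e^u$ so as to replace $x\partial_x$ by $\partial_u$, whereas you apply the Leibniz identity directly to the derivation $D=x\partial_x$; the paper also derives (ii) from (i) by the same substitution trick you mention as the alternative.
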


\begin{proof}
\noindent$(i)$ If $f=\sum_{n \geq 0} f_n \hbar^n$ then, $S(f)=\sum_{n \geq 0} S(f_n) \hbar^n$. Thus, we just need to prove the result for $f \in \cO_{\C^\ast}$. Then, using the local biholomorphism given by the change of variable $x=e^u$ and writing $F(u)=f(e^u)$, we have
\begin{align*}
S \circ f&= \sum_{k \geq 0} \frac{\hbar^k}{k!} (x \partial_x)^k \circ f
         = \sum_{k \geq 0} \frac{\hbar^k}{k!} \partial_u^k \circ F\\
         &=\sum_{k \geq 0} \frac{\hbar^k}{k!} \sum_{p=0}^k \binom{k}{p} \partial_u^{k-p} F \partial_u^{k-p}=\sum_{k \geq 0} \hbar^k \sum_{p=0}^k \frac{1}{(k-p)!p!} \partial_u^{k-p} F \partial_u^p\\ 
         &=S(f) \circ S.
\end{align*}
\noindent $(ii)$ Applying formula $(i)$ to $S^{-1}(f)$, we get that $S \circ S^{-1}(f)= f \circ S$. This implies that $S^{-1}(f) \circ S^{-1}=S^{-1} \circ f$.
\end{proof}

\begin{lemma}\label{lem:orelocS}
Let $x \in \C^\ast$, then $\theta_{Sx} \colon \cS^+_{\C^\ast, x} \to \cS_{\C^\ast, x}$ is the Ore localisation of $\cS^+_{\C^\ast, x}$ with respects to the multiplicative set $\lbrace e^{n \hbar x \partial_x} \rbrace_{n \in \N}$.
\end{lemma}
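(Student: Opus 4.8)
The plan is to verify the two defining axioms of an Ore localisation: that the multiplicative set $\Sigma_S := \lbrace S^n \rbrace_{n \in \N}$ satisfies the left (or right) Ore condition inside $\cS^+_{\C^\ast, x}$, and that the localised algebra constructed abstractly coincides, via $\theta_{Sx}$, with $\cS_{\C^\ast, x}$. For the first point, I would begin by describing a normal form for elements of $\cS^+_{\C^\ast, x}$: using Lemma \ref{lem:relation}(i), every element can be rewritten as a finite sum $\sum_{k=0}^{N} f_k \circ S^k$ with $f_k \in \cO^\hbar_{\C^\ast, x}$, because each occurrence of $S$ can be moved to the right past any coefficient at the cost of applying $S$ to that coefficient. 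This presentation makes it clear that $\cS^+_{\C^\ast, x}$ is a twisted polynomial ring $\cO^\hbar_{\C^\ast, x}[S;\alpha]$ for the automorphism $\alpha = S(\cdot)$ of $\cO^\hbar_{\C^\ast, x}$ (here one uses that $x \mapsto e^{\hbar x \partial_x}$ acts as an algebra automorphism of $\cO^\hbar_{\C^\ast, x}$, which follows from the change of variable $x = e^u$ already used in Lemma \ref{lem:relation}).

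Next I would invoke the standard fact that in a skew polynomial ring $R[S;\alpha]$ with $\alpha$ an automorphism, the set $\lbrace S^n \rbrace_{n \geq 0}$ is a left and right Ore set: given $a \in \cS^+_{\C^\ast, x}$ and $S^n$, one has $S^n \circ a = \alpha^n(a) \circ S^n$ (again Lemma \ref{lem:relation} iterated), so the Ore condition $S^n \cdot \cS^+ \cap a \cdot \Sigma_S \neq \emptyset$ is satisfied with the obvious witnesses, and $\Sigma_S$ contains no zero divisors since $\cO^\hbar_{\C^\ast, x}$ is a domain and the leading-term map is multiplicative up to $\alpha$. Therefore the Ore localisation $\Sigma_S^{-1}\cS^+_{\C^\ast, x}$ exists; its elements are fractions $S^{-n} \circ b$, equivalently (moving $S^{-1}$ to the right using Lemma \ref{lem:relation}(ii)) sums $\sum_{k=-M}^{N} g_k \circ S^k$ with $g_k \in \cO^\hbar_{\C^\ast, x}$.

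Finally I would identify this localised ring with $\cS_{\C^\ast, x}$ through $\theta_{Sx}$. By the universal property of Ore localisation, the inclusion $\theta_{Sx} \colon \cS^+_{\C^\ast, x} \hookrightarrow \cS_{\C^\ast, x}$ factors through $\Sigma_S^{-1}\cS^+_{\C^\ast, x}$, since $\theta_{Sx}$ sends each $S^n$ to an invertible element (its inverse $S^{-n} = e^{-n\hbar x \partial_x}$ lies in $\cS_{\C^\ast, x}$ by definition). It remains to check that the induced map $\Sigma_S^{-1}\cS^+_{\C^\ast, x} \to \cS_{\C^\ast, x}$ is an isomorphism: surjectivity is immediate because $\cS_{\C^\ast, x}$ is by definition generated by $\cO^\hbar_{\C^\ast, x}$, $S$ and $S^{-1}$, all of which are in the image; injectivity follows from the normal form above, comparing coefficients of the $S^k$ in $\cD^\hbar_{\C^\ast, x}$ (the operators $S^k$, $k \in \Z$, are $\cO^\hbar_{\C^\ast, x}$-linearly independent inside $\cD^\hbar_{\C^\ast}$, as one sees by evaluating on monomials $x^m$).

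\textbf{Main obstacle.} The routine skew-polynomial formalism is standard, so the only genuinely delicate point is the last one: verifying that the two-sided $\cO^\hbar$-span of the $S^k$ inside $\cD^\hbar_{\C^\ast}$ is free, i.e. that no nontrivial $\cO^\hbar$-linear relation $\sum_k g_k \circ S^k = 0$ holds. I expect to settle this by passing to the exponential coordinate $x = e^u$, where $S^k$ becomes the translation $u \mapsto u + k\hbar$ (at the level of formal expansions), and these translation operators are manifestly linearly independent over meromorphic-coefficient rings; one must be slightly careful that everything is happening in the $\hbar$-adically complete ring $\cD^\hbar_{\C^\ast}$ rather than in $\cD_{\C^\ast}$ itself, but the independence statement is stable under the $\hbar$-adic filtration.
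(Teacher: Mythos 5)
Your proof is correct and follows the same route as the paper, whose entire argument is that the lemma ``follows immediately from Lemma \ref{lem:relation}'': the commutation relations $S^{\pm 1}\circ f = S^{\pm 1}(f)\circ S^{\pm 1}$ give both the Ore condition and the presentation of every element of $\cS_{\C^\ast,x}$ as $S^{-n}\circ b$ with $b\in\cS^+_{\C^\ast,x}$, and you have simply spelled out the standard Ore-localisation bookkeeping the paper leaves implicit. One small remark: the $\cO^\hbar_{\C^\ast,x}$-linear independence of the $S^k$ that you invoke for injectivity is exactly the uniqueness part of Proposition \ref{prop:ecriture} (proved later in the paper by the same evaluation-on-monomials argument you sketch, so there is no circularity), and it is in fact dispensable here, since injectivity of the comparison map $\Sigma_S^{-1}\cS^+_{\C^\ast,x}\to\cS_{\C^\ast,x}$ already follows from $\theta_{Sx}$ being an inclusion of subalgebras of $\cD^\hbar_{\C^\ast,x}$ in which the $S^n$ are invertible, hence not zero divisors.
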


\begin{proof}
This follows immediately from Lemma \ref{lem:relation}.
\end{proof}

\begin{proposition}\label{prop:ecriture}
Let $P \in \cS_{\C^\ast}$. Locally $P$ can be written in a unique way in the form
\begin{equation*}
P=\sum_{k=-m}^n f_k S^k
\end{equation*}
where $f_k \in \cO_{\C^\ast}^\hbar$ for $-m \leq k \leq n$ and $m,n \in \N$. 
\end{proposition}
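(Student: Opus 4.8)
The statement asserts that every section $P$ of $\cS_{\C^\ast}$ has a unique local expression $P = \sum_{k=-m}^n f_k S^k$ with $f_k \in \cO_{\C^\ast}^\hbar$. The plan is to prove existence and uniqueness separately.

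For \emph{existence}, I would start from the fact that $\cS_{\C^\ast}$ is, by definition, the subalgebra of $\cD_{\C^\ast}^\hbar$ generated by $\cO_{\C^\ast}^\hbar$, $S = e^{\hbar x\partial_x}$ and $S^{-1}$. A general section is thus a finite sum of products of such generators. Using Lemma \ref{lem:relation}, any product of the form $g_0 S^{\epsilon_1} g_1 S^{\epsilon_2} \cdots$ (with $\epsilon_i \in \{+1,-1\}$ and $g_i \in \cO_{\C^\ast}^\hbar$) can be rewritten by repeatedly moving all the $S^{\pm 1}$ factors to the right past the coefficients: each time $S^{\pm 1}$ passes a coefficient $f$ it is replaced by $S^{\pm 1}(f)$, which again lies in $\cO_{\C^\ast}^\hbar$ since $S^{\pm 1}$ acts on $\cO_{\C^\ast}^\hbar$ coefficientwise (as noted in the proof of Lemma \ref{lem:relation}). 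After collecting, every generator-monomial becomes $h \, S^{j}$ for a single integer $j$ (the signed count of $S$-factors) and a single coefficient $h \in \cO_{\C^\ast}^\hbar$. Summing over the finitely many monomials and grouping by the exponent $j$ yields the desired normal form $P = \sum_{k=-m}^n f_k S^k$ with $m, n \in \N$.

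For \emph{uniqueness}, it suffices to show that $\sum_{k=-m}^n f_k S^k = 0$ in $\cD_{\C^\ast}^\hbar$ forces all $f_k = 0$. Multiplying through by $S^m$ on the left (using $S^{-1}S = \id$ and Lemma \ref{lem:relation} to keep the expression in normal form) reduces to the case $\sum_{k=0}^{N} g_k S^k = 0$ with $g_k \in \cO_{\C^\ast}^\hbar$, $N = m+n$. Now I would exploit the explicit formula for $S$: in the variable $u$ with $x = e^u$ one has $S = e^{\hbar \partial_u}$, and the claim becomes that the operators $e^{k\hbar\partial_u}$, $k = 0,\dots,N$, are left-linearly independent over $\cO^\hbar$. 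This can be seen by testing against suitable elements, e.g. applying $\sum_k g_k e^{k\hbar\partial_u}$ to the exponential-type test functions $e^{\lambda u}$ for varying $\lambda$, or — staying purely formal — by reducing modulo $\hbar$: modulo $\hbar$ each $e^{k\hbar\partial_u}$ becomes the identity, so one must instead track the leading $\hbar$-behaviour more carefully, comparing the full differential-operator symbols. The cleanest route is to note that $\cD_{\C^\ast}^\hbar$ is the $\hbar$-completion of $\cD_{\C^\ast} \otimes_\C \C^\hbar$ and that, order by order in $\hbar$, the identity $\sum_k g_k S^k = 0$ gives a relation among genuine differential operators; since $S^k = \sum_{j\ge 0}\frac{(k\hbar)^j}{j!}\partial_u^j$, comparing the coefficient of each $\hbar^p$ and then each $\partial_u^j$ yields a Vandermonde-type linear system in the $k$'s which forces the coefficients of the $g_k$ to vanish.

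The \textbf{main obstacle} is the uniqueness part, specifically making the linear-independence argument rigorous in the $\hbar$-adic setting: one cannot simply set $\hbar = 0$, and one must organize the comparison of $\hbar$-orders and differential-operator orders simultaneously so that a Vandermonde (or Wronskian) nonvanishing kicks in. Once the normal form is set up on a coordinate chart where $x = e^u$ is valid, everything else — the existence reduction via Lemma \ref{lem:relation} and the bookkeeping of exponents — is routine.
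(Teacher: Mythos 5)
Your existence argument coincides with the paper's: both invoke Lemma~\ref{lem:relation} to push all $S^{\pm1}$ factors to the right. For uniqueness you hedge between two strategies, and it is worth comparing them with what the paper actually does.

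The paper reduces to $\sum_{k=0}^n f_k S^k = 0$ (by right-composing with $S^m$, which is slightly simpler than your left-multiplication since no commutation is needed), then applies the operator to the monomials $x^p$ and evaluates at a point $x_0\in\C^\ast$. Since $S(x^p)=e^{p\hbar}x^p$, this yields $\sum_{k=0}^n f_k(x_0)\,e^{pk\hbar}=0$ for every $p\in\N$, i.e.\ the polynomial $\sum_k f_k(x_0)\,z^k\in\C^{\hbar,loc}[z]$ vanishes at the infinitely many distinct points $e^{p\hbar}$. Over the field $\C^{\hbar,loc}$ this forces the polynomial to be zero, hence $f_k(x_0)=0$ for all $x_0$. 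This is precisely your first idea (test against $e^{\lambda u}=x^\lambda$) carried to completion, and it dissolves the ``main obstacle'' you flag: one does not set $\hbar=0$ but works over the Laurent-series field, where the $e^{p\hbar}$ for $p=0,1,2,\dots$ are genuinely distinct elements.

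Your preferred ``cleanest route''---expanding $S^k=\sum_j \tfrac{(k\hbar)^j}{j!}\partial_u^j$, comparing coefficients of $\hbar^p\partial_u^j$, and invoking Vandermonde on $(k^j)_{j,k}$---is also valid. Writing $g_k=\sum_i g_{k,i}\hbar^i$, the coefficient of $\hbar^{i+j}\partial_u^j$ gives $\sum_k g_{k,i}\,k^j/j!=0$, and Vandermonde invertibility indeed forces $g_{k,i}=0$. This buys you a proof that never leaves $\C^\hbar$ and avoids the localization $\C^{\hbar,loc}$, at the cost of a bit more bookkeeping. Either way the proposal is sound; the paper's route is shorter once one trusts the field $\C^{\hbar,loc}$, while yours is more explicitly ``order by order.'' You should, however, finish one of the two arguments rather than offering both as possibilities.
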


\begin{proof}
The existence follows from Lemma \ref{lem:relation}. We now prove the uniqueness. For that purpose it is sufficient to prove that for every $m,n \in \N$, 
\begin{equation}\label{eq:unicite}
\sum_{k=-m}^n f_k S^k=0
\end{equation}
implies that for every $-m \leq k \leq n$, $f_k=0$. 

By considering the composition $P \circ S^m$ we can assume that $m=0$. Let $(f_k)_{0 \leq k \leq n}$ such that Equation \eqref{eq:unicite} holds. Then,
\begin{align*}
\sum_{k=0}^n f_k S^k (x^p)&=\sum_{k=0}^n f_k e^{pk\hbar} x^p=0.\\
\end{align*}
Evaluating in $x_0 \in \C^\ast$, we have

\begin{equation*}
\sum_{k=0}^n f_k(x_0) e^{pk\hbar}=0.
\end{equation*}
Then, the polynom $f(z)=\sum_{k=0}^n f_k(x_0) z^k \in \C^{\hbar,loc}[z]$ has infinitely many roots (the $e^{p \hbar} \in \C^{\hbar,loc}$ for $ p\in \N$). It follows that $f(z)=0$ which implies that for every $x_0 \in \C^\ast$, $f_k(x_0)=0$ which proves the claim.
\end{proof}
\begin{remark}
In view of Lemma \ref{lem:relation} (i), it is possible to write locally any $P \in \cS_{\C^\ast}$ in the form
\[
P=\sum_{k=-m}^n S^k \circ g_k
\]
where $g_k \in \cO_{\C^\ast}^\hbar$ for $-m \leq k \leq n$ and $m,n \in \N$. 
\end{remark}
We will need the following finiteness result.
\begin{proposition}\label{prop:Spcoh}
The algebra $\cS^{+}_{\C^\ast}$ is a Noetherian $\cO_{\C^\ast}^\hbar$-algebra.
\end{proposition}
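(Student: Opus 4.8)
The plan is to show Noetherianity by exhibiting $\cS^+_{\C^\ast}$ as (a deformation version of) a skew/twisted polynomial ring over the Noetherian ring $\cO^\hbar_{\C^\ast}$ in one variable $S$, and then to invoke the classical Hilbert basis theorem for Ore extensions. First I would record that, by Proposition \ref{prop:ecriture} (applied to the sub-algebra $\cS^+_{\C^\ast}$, i.e. with $m=0$), every section $P$ of $\cS^+_{\C^\ast}$ can be written uniquely as a finite sum $P=\sum_{k=0}^n f_k S^k$ with $f_k\in\cO^\hbar_{\C^\ast}$. This gives a decomposition $\cS^+_{\C^\ast}=\bigoplus_{k\geq 0}\cO^\hbar_{\C^\ast}\,S^k$ as a left $\cO^\hbar_{\C^\ast}$-module, and Lemma \ref{lem:relation}(i) gives the commutation rule $S\circ f=S(f)\circ S$, where $S(f)$ denotes the image of $f$ under the algebra automorphism $\sigma$ of $\cO^\hbar_{\C^\ast}$ induced by $e^{\hbar x\partial_x}$ (it is an automorphism, with inverse induced by $e^{-\hbar x\partial_x}$, acting on $\sum f_n\hbar^n$ by $\sum\sigma(f_n)\hbar^n$ with $\sigma(f)(x)=f(e^\hbar x)$). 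Hence $\cS^+_{\C^\ast}$ is precisely the skew polynomial ring (Ore extension) $\cO^\hbar_{\C^\ast}[S;\sigma]$ with zero derivation.

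The next step is the base ring: $\cO^\hbar_{\C^\ast}$ is a sheaf of Noetherian rings. Stalkwise, $\cO^\hbar_{\C^\ast,x}$ is the $\hbar$-adic completion $\cO_{\C^\ast,x}\lI\hbar\rI$ of the Noetherian local ring $\cO_{\C^\ast,x}$, and the $\hbar$-adic completion of a Noetherian ring is Noetherian; so each stalk of $\cS^+_{\C^\ast}$ is Noetherian by the Hilbert basis theorem for Ore extensions (if $R$ is left Noetherian and $\sigma$ an automorphism, then $R[S;\sigma]$ is left Noetherian). To upgrade from ``Noetherian stalks'' to ``Noetherian $\cO^\hbar_{\C^\ast}$-algebra'' in the sheaf-theoretic sense used in the paper, I would argue exactly as for coherence of $\cD_{\C^\ast}$ or $R(\cD_M)$ (Proposition \ref{prop:cohRD}): one uses the filtration of $\cS^+_{\C^\ast}$ by the degree in $S$, namely $F_n\cS^+_{\C^\ast}=\bigoplus_{k=0}^n\cO^\hbar_{\C^\ast}S^k$, whose associated graded is $\bigoplus_{n\geq0}\cO^\hbar_{\C^\ast}\overline{S}^n\cong\cO^\hbar_{\C^\ast}[\overline S]$ (an ordinary polynomial ring, since the symbol of $S$ commutes with $\cO^\hbar_{\C^\ast}$). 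Since $\cO^\hbar_{\C^\ast}[\overline S]$ is a Noetherian $\cO^\hbar_{\C^\ast}$-algebra by the ordinary Hilbert basis theorem applied over the Noetherian Stein-like sheaf $\cO^\hbar_{\C^\ast}$, a standard filtered argument shows that any increasing chain of coherent left ideals stabilizes locally, giving Noetherianity of $\cS^+_{\C^\ast}$.

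The main obstacle I anticipate is purely bookkeeping: verifying that $\sigma$ genuinely extends to a ($\C^\hbar$-linear) algebra automorphism of the sheaf $\cO^\hbar_{\C^\ast}$ and that the identification $\cS^+_{\C^\ast}\cong\cO^\hbar_{\C^\ast}[S;\sigma]$ is an isomorphism of sheaves of algebras (not merely of stalks) --- here Proposition \ref{prop:ecriture} already does the hard work of uniqueness of the normal form, so what remains is to check the automorphism property of $\sigma$ locally (via the change of variables $x=e^u$, under which $x\partial_x=\partial_u$ and $e^{\hbar x\partial_x}$ becomes the translation $u\mapsto u+\hbar$) and to glue. Once the Ore-extension structure is in place, the Noetherianity statement is a formal consequence of the Hilbert basis theorem for filtered rings, and the passage to the sheaf-theoretic statement runs parallel to the treatment of $R(\cD_M)$ and $\cD_{\C^\ast}$ already cited in the paper.
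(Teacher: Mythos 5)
Your proposal is correct and follows essentially the same route as the paper: the key point in both is the commutation rule of Lemma \ref{lem:relation}(i) together with the fact that $S$ acts as an automorphism of $\cO_{\C^\ast}$ (equivalently of $\cO^\hbar_{\C^\ast}$), exhibiting $\cS^+_{\C^\ast}$ as a twisted polynomial (Ore) extension of the Noetherian sheaf of rings $\cO^\hbar_{\C^\ast}$. The only difference is that the paper concludes by citing the sheaf-theoretic Hilbert basis theorem for such extensions (\cite[Theorem 5.1.1]{AK}), whereas you sketch that argument directly.
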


\begin{proof}
It follows from point $(i)$ of Lemma \ref{lem:relation} and the fact that $S$ is an automorphism of $\cO_{\C^\ast}$ that the hypothesis  of \cite[Theorem 5.1.1]{AK} are statisfied. This implies that $\cS^{+}_{\C^\ast}$ is Noetherian.
\end{proof}

\begin{corollary}\label{cor:cohS}
The $\cO^\hbar_{\C^\ast}$-algebra $\cS_{\C^\ast}$ is coherent.
\end{corollary}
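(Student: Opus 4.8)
The goal is to deduce coherence of $\cS_{\C^\ast}$ from the already established Noetherianity of $\cS^+_{\C^\ast}$ (Proposition \ref{prop:Spcoh}) together with the fact that $\cS_{\C^\ast}$ is, stalkwise, an Ore localization of $\cS^+_{\C^\ast}$ (Lemma \ref{lem:orelocS}).

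\medskip

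\emph{Plan.} The plan is to realize $\cS_{\C^\ast}$ as a flat localization of the Noetherian sheaf of algebras $\cS^+_{\C^\ast}$ and then invoke the standard fact that a flat algebra over a coherent base, which is moreover stalkwise Noetherian, is coherent. First I would observe, using Lemma \ref{lem:relation}, that the multiplicative set $T=\{S^n\}_{n\in\N}$ is a two-sided Ore set in $\cS^+_{\C^\ast}$ (the Ore condition holds because $S$ is invertible as an operator and conjugation by $S$ is the automorphism $f\mapsto S(f)$ of $\cO^\hbar_{\C^\ast}$, so $S^n f = S^n(f) S^n$ rewrites any left fraction as a right fraction and vice versa). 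By Lemma \ref{lem:orelocS} the natural map $\theta_S$ identifies, at each stalk $x\in\C^\ast$, the algebra $\cS_{\C^\ast,x}$ with the Ore localization $T^{-1}\cS^+_{\C^\ast,x}$; since Ore localization is exact, $\cS_{\C^\ast,x}$ is a flat $\cS^+_{\C^\ast,x}$-module (on either side). Because flatness of a morphism of sheaves of rings can be checked on stalks, $\cS_{\C^\ast}$ is flat over $\cS^+_{\C^\ast}$.

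\medskip

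Next I would record that $\cS^+_{\C^\ast}$ is a coherent sheaf of rings: Proposition \ref{prop:Spcoh} says it is a Noetherian $\cO^\hbar_{\C^\ast}$-algebra, and a sheaf of rings all of whose stalks are (left and right) Noetherian and which is itself coherent as a module over a coherent sheaf of rings is coherent; alternatively, the cited result \cite[Theorem 5.1.1]{AK} already delivers coherence of $\cS^+_{\C^\ast}$ directly, and $\cS_{\C^\ast}$ has the same stalks up to localization, which are again Noetherian (localization of a Noetherian ring is Noetherian). Then the conclusion follows from the general principle: if $\mathcal B$ is a coherent sheaf of rings with Noetherian stalks and $\mathcal A$ is a sheaf of rings, flat as a $\mathcal B$-module and with Noetherian stalks, and if every stalk $\mathcal A_x$ is a localization of $\mathcal B_x$ at an Ore set, then $\mathcal A$ is coherent. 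Concretely: a finitely generated left ideal $\cI\subset\cS_{\C^\ast}$ restricted to a small enough open set is generated by finitely many sections, each of which, by Proposition \ref{prop:ecriture}, can be cleared of negative powers of $S$ by right multiplication by a suitable $S^m$; this exhibits $\cI$ (locally) as $\cS_{\C^\ast}\te_{\cS^+_{\C^\ast}}\cJ$ for a finitely generated left ideal $\cJ$ of $\cS^+_{\C^\ast}$, and finite generation of the kernel of $\cS^+_{\C^\ast}{}^{\oplus k}\to\cJ$ (which holds since $\cS^+_{\C^\ast}$ is Noetherian hence coherent) is preserved under the flat base change $\cS_{\C^\ast}\te_{\cS^+_{\C^\ast}}(-)$.

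\medskip

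\emph{Main obstacle.} The routine verifications (Ore condition, exactness/flatness of localization, clearing denominators) are all straightforward given Lemmas \ref{lem:relation}–\ref{lem:orelocS} and Proposition \ref{prop:ecriture}. The one point requiring care is the passage from the stalkwise statement to the sheaf-theoretic statement of coherence: one must argue that a finitely generated ideal sheaf of $\cS_{\C^\ast}$ is, locally, obtained by flat base change from a finitely generated ideal sheaf of $\cS^+_{\C^\ast}$, so that the local finiteness of syzygies over the Noetherian ring $\cS^+_{\C^\ast}$ transfers. This is where one uses that finitely many generating sections of an $\cS_{\C^\ast}$-ideal, after multiplying by a common power $S^m$, already lie in $\cS^+_{\C^\ast}$, together with flatness to see that the syzygy module base-changes correctly; I expect this bookkeeping to be the most delicate part of writing the argument out in full, though conceptually it is the standard ``flat localization of a coherent ring is coherent'' lemma.
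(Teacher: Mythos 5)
Your proposal is correct and follows essentially the same route as the paper: clear the negative powers of $S$ from the generators by right multiplication by $S^m$ (Proposition \ref{prop:ecriture}), reduce to the coherence of the Noetherian algebra $\cS^+_{\C^\ast}$ (Proposition \ref{prop:Spcoh}), and transfer the finitely generated syzygy module back through the Ore localization. The only cosmetic difference is that you package the transfer step as flat base change along the Ore localization (a fact the paper records separately in Proposition \ref{prop:flatS}(ii)), whereas the paper carries out the identification of kernels directly by hand.
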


\begin{proof}
The left $\cS_{\C^\ast}$-module $\cS_{\C^\ast}$ is clearly finitely generated over itself.
Let $U \subset X$ be an open set and let $x \in U$ and let $f: \cS^m_{\C^\ast}|_U \to \cS_{\C^\ast}|_U$ be a morphism of $\cS_{\C^\ast}$-modules.
Let $(E_i)_{1 \leq i \leq m}$ be the canonical basis of $\cS^m_{\C^\ast}$. We set $Q_i=f(E_i)$.

It follows from Proposition \ref{prop:ecriture} that there exists an open set $V \subset U$ containing $x$ and $m \in \N$ such that 
\[
Q_i=(\sum_{k=0}^{n_i} f_{ik} S^k)S^{-m}. 
\]
Then, we get a morphism of left $\cS_{\C^\ast}^+|_V$-modules 
\[
\tilde{f}:\cS_{\C^\ast}^{+m}|_V \to \cS_{\C^\ast}^+|_V, \; P \mapsto f(P)S^m. 
\]
Since $\cS^{+}_{\C^\ast}$ is a coherent sheaf, it follows that $\ker(\tilde{f})$ is a locally finitely generated $\cS^{+}_{\C^\ast}|_V$-module. 

Since $f|_V \circ [\theta_S,\ldots, \theta_S]=\tilde{f} S^{-m}$ then, $\ker\tilde{f} \simeq \ker (f|_V \circ [\theta_S,\ldots,\theta_S])$ and $\cS_{\C^\ast} (\ker (f|_V \circ [\theta_S,\ldots,\theta_S])) \simeq \ker (f|_V)$ which proves that $\ker (f|_V)$ is finitely generated. This proves that $\cS_{\C^\ast}$ is coherent.
\end{proof}

We set $X= \C^\ast \times \C^\ast$ and denote by $\cA_X$ the star-algebra defined in the Subsection \ref{subsub:cetcet} and consider the polarization on $(X, \cA_X)$ defined in the subsection just mentionned. We relate the algebras $\cS_{\C^\ast}$ and $\cA_X$.

It follows from Proposition \ref{prop:ecriture} that there is a morphism of left $\pi^{-1} \cO_{\C^\ast}$-modules defined by
\begin{equation}\label{mor:AQ}
\phi \colon \pi^{-1} \cS_{\C^\ast} \to \cA_X, \; f(x) \ni  \cO_{\C^\ast}^\hbar \mapsto f(x_1), \; S^n \mapsto x_2^n \; \textnormal{for $n \in \Z$}.
\end{equation}

\begin{proposition}\label{prop:morofalg}
The morphism \eqref{mor:AQ} is a morphism of $\pi^{-1} \cO^\hbar_{\C^\ast}$-algebras.
\end{proposition}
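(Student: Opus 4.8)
The plan is to prove multiplicativity of $\phi$ by a direct computation on local normal forms, matching composition of operators in $\cS_{\C^\ast}$ with the star-product of $\cA_X$. Well-definedness and additivity of $\phi$ are already granted by \eqref{mor:AQ} together with the uniqueness of the normal form in Proposition \ref{prop:ecriture}; what remains is to check that $\phi$ sends composition to $\star$ and the unit to the unit. Since being an algebra morphism is a local property, I would work on an open set where the normal form of Proposition \ref{prop:ecriture} is available: every section of $\cS_{\C^\ast}$ is then uniquely a finite sum $\sum_k f_k S^k$ with $f_k\in\cO^\hbar_{\C^\ast}$ and $k$ in a finite subset of $\Z$. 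As $\phi$ is additive and $\star$ is biadditive, multiplicativity reduces to the identities $\phi(1)=1$ (immediate, $\phi(1)=1(x_1)x_2^0=1$) and, for all $i,j\in\Z$ and $f,g\in\cO^\hbar_{\C^\ast}$,
\begin{equation*}
\phi\bigl((f S^i)(g S^j)\bigr)=\phi(f S^i)\star\phi(g S^j).
\end{equation*}

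To compute the left-hand side, I would first use Lemma \ref{lem:relation}, iterated (part (i) for positive exponents, part (ii) for negative ones), to get $S^i\circ g=S^i(g)\circ S^i$ in $\cS_{\C^\ast}$, where $S^i(g)=e^{i\hbar x\partial_x}g$; hence $(f S^i)(g S^j)=\bigl(f\cdot S^i(g)\bigr)S^{i+j}$, which is again in normal form. Applying $\phi$ and recalling that left $\star$-multiplication by a function of $x_1$ alone coincides with ordinary multiplication in $\cO_X^\hbar$ (because $x_2\partial_{x_2}$ kills such a function), this yields $\phi\bigl((f S^i)(g S^j)\bigr)=f(x_1)\,\bigl(S^i(g)\bigr)(x_1)\,x_2^{i+j}$. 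Writing $g=\sum_n g_n\hbar^n$ and expanding $e^{i\hbar x\partial_x}$, one sees that $\bigl(S^i(g)\bigr)(x_1)=e^{i\hbar x_1\partial_{x_1}}\bigl(g(x_1)\bigr)$, since $\pi^\ast$ intertwines $x\partial_x$ with $x_1\partial_{x_1}$.

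For the right-hand side I would substitute $\phi(f S^i)=f(x_1)x_2^i$ and $\phi(g S^j)=g(x_1)x_2^j$ into the explicit star-product of Example \ref{ex:expexp}. Using $(x_2\partial_{x_2})^k(x_2^i)=i^k x_2^i$ and the fact that $(x_1\partial_{x_1})^k$ acts only on the $x_1$-factor of $g(x_1)x_2^j$, the sum collapses to $f(x_1)\,x_2^{i+j}\sum_{k\ge 0}\frac{(i\hbar)^k}{k!}(x_1\partial_{x_1})^k\bigl(g(x_1)\bigr)=f(x_1)\,x_2^{i+j}\,e^{i\hbar x_1\partial_{x_1}}\bigl(g(x_1)\bigr)$. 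Comparing with the previous paragraph and using commutativity of $\cO_X^\hbar$ gives the desired equality, completing the proof.

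The computation is essentially routine; the only real point of care is the bookkeeping of the three products in play — composition in $\cD^\hbar_{\C^\ast}$, the star-product of $\cA_X$, and ordinary multiplication in $\cO_X^\hbar$ — and keeping the argument local, since Proposition \ref{prop:ecriture} gives the normal form only locally. One should also note that all exponent manipulations ($i^k$, $e^{i\hbar x\partial_x}$, $x_2^{i+j}$) are valid uniformly for $i,j\in\Z$, which is exactly where the invertibility of $x_2$ on $X=\C^\ast\times\C^\ast$ is used.
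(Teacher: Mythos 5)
Your proof is correct and follows essentially the same route as the paper: reduce to the local normal form of Proposition \ref{prop:ecriture}, use the commutation relation of Lemma \ref{lem:relation} to compute the composition, and match it against the explicit star-product of Example \ref{ex:expexp}. The only difference is one of thoroughness — the paper checks multiplicativity only on the generators (the identities $\phi(S^2)=x_2\star x_2$ and $\phi(Sf)=\phi(S)\star\phi(f)$), whereas you verify it on arbitrary monomials $fS^i$, $gS^j$ with $i,j\in\Z$, which fills in the cases of negative powers and general products that the paper leaves implicit.
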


\begin{proof}
By definition $\phi$ is a morphism of $\pi^{-1} \cO_{\C^\hbar}$-module and it is clear that 
\begin{equation*}
\phi(S^2)=x_2^2=x_2 \star x_2.
\end{equation*} 
Thus, the only thing we need to check is that 
\begin{equation*}
\phi(Sf)=\phi(S) \phi(f).
\end{equation*}
But, 
\begin{align*}
\phi(Sf)&=\phi(S(f)S)=S(f)\star x_2\\
        &=x_2 \star f=\phi(S) \star \phi(f).
\end{align*}
\end{proof}

We endow the algebra $\cA_X$ with the filtration defined in \eqref{filt:DQalg}.

This filtration induces a filtration $(\cS_{\C^\ast}(k))_{k \in \Z}$ on $\pi^{-1}\cS_{\C^\ast}$ such that for every $(p_1,p_2) \in X$, we have the following isomorphism
\begin{equation*}
\cS_{\C^\ast}(0)_{(p_1,p_2)} / \cS_{\C^\ast}(-1)_{(p_1,p_2)} \simeq \cO_{\C^\ast,\;p_1}[x_2,x_2^{-1}].
\end{equation*}

\begin{lemma}\label{lem:flatquotientS}
The sheaf of rings $\cA_X(0) / \cA_X(-1)$ is flat over the sheaf of rings $\cS_{\C^\ast}(0) / \cS_{\C^\ast}(-1)$.
\end{lemma}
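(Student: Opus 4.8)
The plan is to identify the two sheaves of rings appearing in the statement explicitly and reduce the flatness claim to Proposition~\ref{prop:flat}. First I would compute the graded piece on the DQ-side: since $\cA_X$ carries the canonical filtration \eqref{filt:DQalg}, we have $\cA_X(0)/\cA_X(-1)\simeq\cA_X/\hbar\cA_X\simeq\cO_X$, where $X=\C^\ast\times\C^\ast$. On the scaling-operator side, the induced filtration on $\pi^{-1}\cS_{\C^\ast}$ was arranged so that $\hbar$ sits in degree $-1$ and $S=e^{\hbar x\partial_x}$ (hence $S^{-1}$) sits in degree $0$; using the normal form of Proposition~\ref{prop:ecriture}, every section of $\cS_{\C^\ast}(0)$ is a finite sum $\sum_k f_k S^k$ with $f_k\in\cO_{\C^\ast}^\hbar$ contributing the correct order, so that stalkwise $\cS_{\C^\ast}(0)_{(p_1,p_2)}/\cS_{\C^\ast}(-1)_{(p_1,p_2)}\simeq\cO_{\C^\ast,p_1}[x_2,x_2^{-1}]$, exactly as recorded just before the lemma. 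Globally this identifies $\cS_{\C^\ast}(0)/\cS_{\C^\ast}(-1)$ with the subsheaf $\cO_{[X]}^{x_2}$ of $\cO_X$ consisting of sections that are Laurent polynomial in the fibre coordinate $x_2$ of $\pi\colon X\to\C^\ast$, with $S^k$ corresponding to $x_2^k$ (this is the content of the morphism \eqref{mor:AQ} reduced modulo $\hbar$).

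Next I would invoke Proposition~\ref{prop:flat}, or rather the same argument that proves it: $\cO_X$ is flat over the subring of sections that are polynomial in the fibre coordinates. The only mild point is that here we allow Laurent (not merely) polynomials in $x_2$, because $X=\C^\ast\times\C^\ast$ and $S$ is invertible; but $\cO_{[X]}^{x_2}$ is simply the localization of the ring of fibrewise polynomials at the multiplicative set $\{x_2^n\}_{n\ge 0}$, and $\cO_X$ is already a module over that localization, so flatness over $\cO_{[X]}^{x_2}$ follows from flatness over the polynomial subring by base change along a localization (localization is flat, and flatness is preserved). Concretely, for $\C^\ast\times\C^\ast$ one may also argue directly and stalkwise: $\cO_{X,(p_1,p_2)}$ is a filtered colimit of finite free $\cO_{\C^\ast,p_1}[x_2,x_2^{-1}]$-modules (truncations of convergent Laurent-type expansions in $x_2$ near $p_2$), hence flat.

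Assembling, the lemma reads: $\cA_X(0)/\cA_X(-1)\simeq\cO_X$ is flat over $\cS_{\C^\ast}(0)/\cS_{\C^\ast}(-1)\simeq\cO_{[X]}^{x_2}$. The main obstacle, such as it is, is purely bookkeeping: one must check that the morphism $\phi$ of \eqref{mor:AQ} is compatible with the filtrations (which it is, by the way the filtration on $\pi^{-1}\cS_{\C^\ast}$ was defined) so that passing to $\gr$ really produces the ring inclusion $\cO_{[X]}^{x_2}\hookrightarrow\cO_X$ and not some twisted version of it; once that is in place the flatness is immediate from Proposition~\ref{prop:flat} together with the flatness of localization. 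I would therefore write the proof as: (1) identify both graded rings and the induced map via Proposition~\ref{prop:ecriture} and \eqref{mor:AQ}; (2) observe $\cO_{[X]}^{x_2}$ is a localization of $\cO_{[X]}$; (3) conclude by Proposition~\ref{prop:flat} and transitivity of flatness.
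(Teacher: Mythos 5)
Your proposal is correct and follows essentially the same route as the paper: identify the graded rings stalkwise as $\cO_{\C^\ast\times\C^\ast,(p_1,p_2)}$ and $\cO_{\C^\ast,p_1}[x_2,x_2^{-1}]$, use flatness of $\cO_{X,(p_1,p_2)}$ over the fibrewise polynomial ring $\cO_{\C^\ast,p_1}[x_2]$ (Proposition~\ref{prop:flat}), and pass to the Laurent ring as the localization at $x_2$, which is already invertible in the stalk. Only your parenthetical alternative (that $\cO_{X,(p_1,p_2)}$ is a filtered colimit of finite free $\cO_{\C^\ast,p_1}[x_2,x_2^{-1}]$-modules) is unjustified and should be dropped, but it is not needed for the main argument.
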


\begin{proof}
We need to prove that for any point $(p_1,p_2) \in \C^\ast \times \C^\ast$, 

\[
\cA_X(0)_{(p_1,p_2)} / \cA_X(-1)_{(p_1,p_2)} \simeq \cO_{\C^\ast \times \C^\ast,\; (p_1,p_2)}
\] is flat over 
\[
\cS_{\C^\ast}(0)_{(p_1,p_2)} / \cS_{\C^\ast}(-1)_{(p_1,p_2)} \simeq \cO_{\C^\ast,\;p_1}[x_2,x_2^{-1}]. 
\]
The morphism of algebras 
\begin{equation*}
\cO_{\C^\ast,\;p_1}[x_2] \hookrightarrow \cO_{\C^\ast \times \C^\ast,\;(p_1,p_2)}  
\end{equation*}
is flat. Moreover, $\cO_{\C^\ast,\;p_1}[x_2,x_2^{-1}]$ is the localisation of $\cO_{\C^\ast,\;p_1}[x_2]$ with respect to $x_2$ which is already invertible in $\cO_{\C^\ast \times \C^\ast, \; (p_1,p_2)}$. Thus, the ring $\cO_{\C^\ast \times \C^\ast,\;(p_1,p_2)}$ is flat over $\cO_{\C^\ast,\;p_1}[x_2]$ which proves the claim.
\end{proof}

\begin{proposition}\label{prop:flatS} \begin{enumerate}[(i)]
\item The ring $\cA_X$ is flat over the ring $\pi^{-1} \cS_{\C^\ast}$,
\item The ring $\cS_{\C^\ast}$ is flat over $\cS^+_{\C^\ast}$,
\item  The ring $\cA_X$ is flat over the ring $\pi^{-1}\cS^+_{\C^\ast}$.
\end{enumerate} 
\end{proposition}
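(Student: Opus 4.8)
The plan is to follow, almost verbatim, the three-step pattern of Proposition \ref{prop:DQflatDH}: deduce (i) from the filtered flatness criterion, obtain (ii) from a localization argument, and combine the two for (iii).

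\emph{Part (i).} Here I would invoke Theorem \ref{thm:flatcrit} with $\cA_X$ the star-algebra of Subsection \ref{subsub:cetcet} and $\cB_X=\pi^{-1}\cS_{\C^\ast}$, where $\pi=\pi_1$, embedded in $\cA_X$ via the algebra morphism $\phi$ of \eqref{mor:AQ} (Proposition \ref{prop:morofalg}). First one checks that $\phi$ is a monomorphism: by Proposition \ref{prop:ecriture} a local section of $\cS_{\C^\ast}$ is uniquely of the form $\sum_{k=-m}^{n}f_kS^{k}$ with $f_k\in\cO^\hbar_{\C^\ast}$, its image is the function $\sum_k f_k(x_1)x_2^{k}$, and these Laurent coefficients in $x_2$ are recovered uniquely. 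One then verifies the two hypotheses of Theorem \ref{thm:flatcrit}. Coherence of $\cB_X$ follows from Corollary \ref{cor:cohS} (the projection $\pi$ preserving coherence of a sheaf of rings). For the flatness hypothesis, since the filtration induced by $\phi$ satisfies $\cS_{\C^\ast}(-1)=\hbar\,\pi^{-1}\cS_{\C^\ast}$ and $\cA_X(-1)=\hbar\cA_X$, one has $\cB_X/\hbar\cB_X\simeq\cS_{\C^\ast}(0)/\cS_{\C^\ast}(-1)$ and $\cA_X/\hbar\cA_X\simeq\cA_X(0)/\cA_X(-1)$, so the flatness of $\cA_X/\hbar\cA_X$ over $\cB_X/\hbar\cB_X$ is exactly the content of Lemma \ref{lem:flatquotientS}. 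Theorem \ref{thm:flatcrit} then yields that $\cA_X$ is flat over $\pi^{-1}\cS_{\C^\ast}$.

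\emph{Parts (ii) and (iii).} For (ii) I would use Lemma \ref{lem:orelocS}: at each $x\in\C^\ast$ the stalk $\cS_{\C^\ast,x}$ is the Ore localization of $\cS^+_{\C^\ast,x}$ at the multiplicative set $\{S^{n}\}_{n\in\N}$. Since the localization of a ring at a two-sided Ore set is flat as a module over that ring, $\cS_{\C^\ast,x}$ is flat over $\cS^+_{\C^\ast,x}$ for every $x$; flatness of sheaves of rings being stalk-local, $\cS_{\C^\ast}$ is flat over $\cS^+_{\C^\ast}$. For (iii), apply the exact functor $\pi^{-1}$ to (ii) to see that $\pi^{-1}\cS_{\C^\ast}$ is flat over $\pi^{-1}\cS^+_{\C^\ast}$, then compose with (i); a composition of flat ring extensions is flat, so $\cA_X$ is flat over $\pi^{-1}\cS^+_{\C^\ast}$.

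The main obstacle is not a deep step but the bookkeeping needed to make Theorem \ref{thm:flatcrit} genuinely applicable in part (i): one must be sure that $\pi^{-1}\cS_{\C^\ast}$, which contains the full power-series ring $\cO^\hbar_{\C^\ast}$, still qualifies as a coherent $\C[\hbar]_X$-subalgebra of $\cA_X$ in the sense of that theorem (it is $\hbar$-adically complete and $\C^\hbar$-linear, hence a fortiori $\C[\hbar]_X$-linear), and that the filtration induced on it by $\phi$ reduces modulo $\hbar$ to precisely the ring $\cO_{\C^\ast}[x_2,x_2^{-1}]$ for which Lemma \ref{lem:flatquotientS} was proved (this uses that $S$ commutes with $\cO^\hbar_{\C^\ast}$ modulo $\hbar$ and that $x_2^{\star k}=x_2^{k}$ in $\cA_X$). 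The flatness of the Ore localization in (ii) is classical, but in this non-commutative setting it should be cited in its Ore-localization form rather than deduced from the commutative case.
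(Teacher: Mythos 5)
Your proposal is correct and follows essentially the same route as the paper: part (i) via Theorem \ref{thm:flatcrit} using Corollary \ref{cor:cohS} and Lemma \ref{lem:flatquotientS}, part (ii) via the Ore localization of Lemma \ref{lem:orelocS} (the paper cites Proposition 2.1.16 of \cite{Rob} for flatness of the localization), and part (iii) by composing (i) and (ii). The additional bookkeeping you supply (injectivity of $\phi$, identification of the reductions modulo $\hbar$) is consistent with what the paper leaves implicit.
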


\begin{proof}
\begin{enumerate}[(i)]
\item Corollary \ref{cor:cohS} implies that $\pi^{-1}\cS_{\C^\ast}$ is coherent and Lemma \ref{lem:flatquotientS} states that $\cA_X(0) / \cA_X(-1)$ is flat over $\cS_{\C^\ast}(0) / \cS_{\C^\ast}(-1)$. Thus, the claim follows from Theorem \ref{thm:flatcrit}.
\item This follows from Lemma \ref{lem:orelocS} and Proposition 2.1.16 of \cite{Rob}.
\item This is a consequence of (i) and (ii).
\end{enumerate}
\end{proof}

Consider the functor
\begin{align*}
(\cdot)^{\cA_X} \colon \Mod_\coh(\cS_{\C^\ast}) &\to  \Mod_\coh(\cA_X) \\
\cN & \mapsto \cA_X \te_{\pi^{-1}\cS_{\C^\ast}} \pi^{-1} \cN.
\end{align*}

\begin{corollary}\label{cor:SolS}
Let $\cM \in \Mod_{\coh}(\cS_{\C^\ast})$. Then,
\begin{align*}
\fRHom_{\cS_{\C^\ast}}(\cM, \cO^\hbar_{\C^\ast}) \simeq& \mathrm{R} \pi_\ast  \fRHom_{\cA_X}(\cM^{\cA_X}, \cO^\hbar_{\C^\ast}).
\end{align*}
\end{corollary}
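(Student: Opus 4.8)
The plan is to deduce the statement from Proposition \ref{prop:compsol}, exactly as Corollary \ref{cor:SolRD} is obtained in the Rees $\cD$-module case. I would instantiate that proposition with the symplectic surface $X = \C^\ast \times \C^\ast$ equipped with the star-algebra $\cA_X$ of Subsection \ref{subsub:cetcet}, with the polarization $\cP = (\pi_1 \colon X \to \C^\ast,\ \iota \colon x \mapsto (x,1),\ \cL)$ described there (where $\cL = \cA_X / \cI$ for $\cI$ the left ideal generated by $x_2 - 1$, identified with $\cO_\Lambda^\hbar$ along $\Lambda = \{x_2 = 1\}$), with the sheaf of algebras $\cR_\Lambda = \cS_{\C^\ast}$, with $\phi \colon \pi^{-1}\cS_{\C^\ast} \to \cA_X$ the morphism \eqref{mor:AQ}, and with $\cL_\Lambda = \cO_{\C^\ast}^\hbar$ endowed with its tautological $\cS_{\C^\ast}$-module structure, i.e. the one in which $S$ acts as $e^{\hbar x \partial_x}$ and $\cO_{\C^\ast}^\hbar$ acts by multiplication. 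It then remains to check that this data satisfies the hypotheses of Proposition \ref{prop:compsol}: that $\cS_{\C^\ast}$ is coherent, that $\phi$ is a monomorphism of algebras, and that the pair $(\phi, \cL_\Lambda)$ is compatible with $\cP$.

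Coherence of $\cS_{\C^\ast}$ is Corollary \ref{cor:cohS}. The map $\phi$ is a morphism of $\pi^{-1}\cO_{\C^\ast}^\hbar$-algebras by Proposition \ref{prop:morofalg}, and it is injective: by Proposition \ref{prop:ecriture} every germ of section of $\cS_{\C^\ast}$ is written uniquely as a finite sum $\sum_k f_k S^k$, whose image $\sum_k f_k(x_1) x_2^k$ is a Laurent polynomial in $x_2$ with holomorphic coefficients, hence vanishes in $\cO_X^\hbar$ only if all $f_k$ vanish. For the compatibility condition on $(\cdot)^{\cA_X}$, I would note that $\cA_X$ is flat over $\pi^{-1}\cS_{\C^\ast}$ by Proposition \ref{prop:flatS}(i) and that $\pi^{-1}$ is exact, so $\cM \mapsto \cA_X \te_{\pi^{-1}\cS_{\C^\ast}} \pi^{-1}\cM$ is exact.

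The only step I expect to require genuine care is the identification $\pi_{1\ast}\cL \simeq \cL_\Lambda$ as $\cS_{\C^\ast}$-modules. Since $\cL$ is simple along $\Lambda$ and supported on it, the identification $\cL \simeq \cO_\Lambda^\hbar$ together with the fact that $\pi_1$ restricts to an isomorphism $\Lambda \simeq \C^\ast$ gives $\pi_{1\ast}\cL \simeq \cO_{\C^\ast}^\hbar$ as a sheaf of $\C^\hbar$-modules; the point is to follow the $\cS_{\C^\ast}$-action, obtained by restriction of scalars along $\phi$ from the $\cA_X$-action, through this isomorphism. By the computation recalled in Subsection \ref{subsub:cetcet}, $\cA_X$ acts on $\cO_\Lambda^\hbar$ by $x_1 \cdot f = x_1 f$ and $x_2 \cdot f = e^{\hbar x_1 \partial_{x_1}} f$; hence via $\phi$ the generator $S$ acts by $e^{\hbar x \partial_x}$ and $\cO_{\C^\ast}^\hbar$ acts by multiplication, which is exactly the structure put on $\cL_\Lambda$. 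This establishes the compatibility of $(\phi, \cL_\Lambda)$ with $\cP$, and Proposition \ref{prop:compsol} applied to a coherent $\cS_{\C^\ast}$-module $\cM$ then yields
\[
\fRHom_{\cS_{\C^\ast}}(\cM, \cO^\hbar_{\C^\ast}) \simeq \mathrm{R}\pi_{1\ast}\,\fRHom_{\cA_X}(\cM^{\cA_X}, \cO^\hbar_{\C^\ast}),
\]
which is the assertion.
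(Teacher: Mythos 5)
Your proposal is correct and follows exactly the paper's route: the corollary is deduced from Proposition \ref{prop:compsol} applied to the polarization of Subsection \ref{subsub:cetcet} and the morphism \eqref{mor:AQ}, with the hypotheses supplied by Corollary \ref{cor:cohS} and Proposition \ref{prop:flatS}. The paper states this in one line; you have merely made explicit the verification of the compatibility conditions, which is consistent with what the surrounding results establish.
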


\begin{proof}
This follows from Proposition \ref{prop:compsol}.
\end{proof}

\subsubsection{Translation operators} In view of \cite{DNMPS}, it seems that the quantum curves appearing in the study of Gromow-Witten invariants are translation operators. In this sub-section we introduce the algebra formed by such operators and relate it to the theory of DQ-modules.

Consider $\C$ with the coordinate $x$. Let $\cT_{\C}$ be the sub-algebra of $\cD_{\C}^\hbar$ generated by $\cO_{\C}$ and $e^{\hbar \partial_x}$ and $e^{-\hbar \partial_x}$. Let $\cT^{+}_{\C^\ast}$ be the sub-algbera of $\cD_{\C}^\hbar$ generated by $\cO_{\C}^\hbar$ and $e^{\hbar \partial_x}$. We denote by $T$ the operator $e^{\hbar \partial_x}$ and since $e^{-\hbar \partial_x}$ is the inverse of $T$, we naturally denote it by $T^{-1}$. Finally, we denote by $\theta_T:\cT^{+}_{\C} \to \cT_{\C}$ the inclusion of $\cT^{+}_{\C}$ into $\cT_{\C}$. The proofs of the different results of this sub-section are very similar to the proofs of the previous section. Thus, we do not repeat certain arguments.

\begin{lemma}\label{lem:relationT}
Let $f \in \cO_{\C}^\hbar$.
\begin{enumerate}[(i)]
\item $T \circ f=T(f) \circ T$,
\item $T^{-1} \circ f=T^{-1}(f) \circ T^{-1}$.
\end{enumerate}
\end{lemma}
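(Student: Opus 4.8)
The statement to prove is Lemma~\ref{lem:relationT}, the analogue for the translation operator $T = e^{\hbar \partial_x}$ on $\C$ of the earlier Lemma~\ref{lem:relation} concerning the scaling operator $S = e^{\hbar x \partial_x}$ on $\C^\ast$. The plan is to mimic the proof of Lemma~\ref{lem:relation} almost verbatim, the only simplification being that the relevant vector field is now $\partial_x$ itself rather than the Euler field $x\partial_x$, so no change of variables is needed.

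For part (i), I would first reduce to the case $f \in \cO_\C$: if $f = \sum_{n \geq 0} f_n \hbar^n$ with $f_n \in \cO_\C$, then $T(f) = \sum_{n \geq 0} T(f_n)\hbar^n$ and, since $T$ and $f$ are $\C^\hbar$-linear, it suffices to check $T \circ f = T(f) \circ T$ for $f$ holomorphic. Then I would expand $T \circ f = \sum_{k \geq 0} \frac{\hbar^k}{k!} \partial_x^k \circ f$ as an operator, apply the Leibniz rule $\partial_x^k(fg) = \sum_{p=0}^k \binom{k}{p} (\partial_x^{k-p} f)(\partial_x^p g)$ to get $\partial_x^k \circ f = \sum_{p=0}^k \binom{k}{p} (\partial_x^{k-p} f)\,\partial_x^p$, and resum:
\begin{equation*}
T \circ f = \sum_{k \geq 0} \hbar^k \sum_{p=0}^k \frac{1}{(k-p)!\,p!} (\partial_x^{k-p} f)\,\partial_x^p = \Big(\sum_{j \geq 0} \frac{\hbar^j}{j!}\partial_x^j f\Big) \circ \Big(\sum_{p \geq 0} \frac{\hbar^p}{p!}\partial_x^p\Big) = T(f) \circ T,
\end{equation*}
where the factorization into a Cauchy product is the same bookkeeping step as in Lemma~\ref{lem:relation}. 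One should note that since $T$ is translation by $\hbar$, concretely $T(f)(x) = f(x+\hbar)$ interpreted as a formal power series in $\hbar$, which makes the identity transparent, but the operator-series computation above is self-contained and matches the style already used.

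For part (ii), I would argue exactly as in Lemma~\ref{lem:relation}(ii): apply the already-established formula (i) to the function $T^{-1}(f)$ in place of $f$, giving $T \circ T^{-1}(f) = T(T^{-1}(f)) \circ T = f \circ T$; composing on the left with $T^{-1}$ and on the right with $T^{-1}$ yields $T^{-1}(f) \circ T^{-1} = T^{-1} \circ f$, using $T^{-1}T = TT^{-1} = \id$.

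There is essentially no obstacle here: the lemma is routine and the proof is a direct transcription of the proof of Lemma~\ref{lem:relation}, with the minor convergence/formality caveat that all operator series are $\hbar$-adically convergent in $\cD_\C^\hbar$ so that the rearrangement into a Cauchy product is legitimate. If anything, the only point requiring a word of care is that $T = e^{\hbar \partial_x}$ genuinely defines a section of $\cD_\C^\hbar$ and that $e^{\hbar\partial_x}e^{-\hbar\partial_x} = \id$, but this is immediate since $\partial_x$ commutes with itself and the exponential series converges $\hbar$-adically.
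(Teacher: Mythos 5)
Your proposal is correct and follows exactly the route the paper intends: the paper's proof of Lemma~\ref{lem:relationT} simply refers back to Lemma~\ref{lem:relation}, whose argument is the Leibniz-rule expansion and Cauchy-product resummation you carry out (indeed simpler here, since no change of variable $x=e^u$ is needed), and part (ii) is deduced from (i) in the same way. No gaps.
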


\begin{proof}
The proof is similar to the proof of Lemma \ref{lem:relation}.
\end{proof}

\begin{lemma}\label{lem:orelocT}
Let $x \in \C$, then $\theta_{Tx} \colon \cT^+_{\C, x} \to \cT_{\C, x}$ is the Ore localisation of $\cT^+_{\C, x}$ with respects to the multiplicative set $\lbrace e^{n \hbar  \partial_x} \rbrace_{n \in \N}$.
\end{lemma}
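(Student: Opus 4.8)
The plan is to mimic exactly the proof of Lemma \ref{lem:orelocS}, which in turn reduces to Lemma \ref{lem:relation}. Here the analogous input is Lemma \ref{lem:relationT}: for $f \in \cO_{\C}^\hbar$ one has $T \circ f = T(f) \circ T$ and $T^{-1}\circ f = T^{-1}(f)\circ T^{-1}$, where $T(f)$ denotes the translate of $f$ (concretely $T(f)(x) = f(x+\hbar)$, extended $\hbar$-linearly coefficient-by-coefficient). The key point is that $T$ is an automorphism of the stalk $\cO_{\C,x}^\hbar$ — translation by $\hbar$ does not move the point $x$ since $\hbar$ is a formal parameter, so every germ at $x$ is sent to a germ at $x$, and $T^{-1}$ provides the inverse. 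Hence the multiplicative set $\{e^{n\hbar\partial_x}\}_{n\in\N} = \{T^n\}_{n\in\N}$ satisfies the Ore condition inside $\cT^+_{\C,x}$: given $P \in \cT^+_{\C,x}$ and $T^n$, one writes $P = \sum_k f_k T^k$ (the translation analogue of Proposition \ref{prop:ecriture}), and then $T^n P = (T^n \cdot \sum_k f_k T^{-n}) T^n = \big(\sum_k T^n(f_k) T^{k}\big)$ shows $T^n P \in \cT^+_{\C,x} T^n$, and symmetrically on the other side. The elements of the multiplicative set are already units in $\cT_{\C,x}$ (with inverse $T^{-n}$), so the universal property of Ore localization produces a canonical map from the localization to $\cT_{\C,x}$.

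First I would record that $\cT_{\C,x}$ is generated over $\cT^+_{\C,x}$ by $T^{-1}$, so the localization map $\theta_{Tx}$ is surjective: every element of $\cT_{\C,x}$ can be written as $\sum_{k=-m}^{n} f_k T^k = \big(\sum_{k} f_k T^{k+m}\big) T^{-m}$, i.e. as $Q\, S^{-m}$ with $Q \in \cT^+_{\C,x}$ — this is the image of $Q \otimes T^{-m}$ in the localization. Next I would check injectivity, which amounts to the uniqueness of the expression $\sum_{k=-m}^n f_k T^k$; this is the translation analogue of Proposition \ref{prop:ecriture}, and the author has already flagged that such proofs "are very similar to the proofs of the previous section." The uniqueness argument adapts verbatim: after multiplying by a power of $T$ reduce to $m=0$, apply $\sum_k f_k T^k$ to the functions $e^{px}$ for $p \in \N$ (since $T(e^{px}) = e^{p\hbar} e^{px}$), evaluate at a point $x_0$, and conclude that the polynomial $\sum_k f_k(x_0) z^k \in \C^{\hbar,loc}[z]$ vanishes at the infinitely many points $e^{p\hbar}$, hence is zero.

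I do not expect any genuine obstacle: the whole content is that translation by a formal parameter fixes the base point, so $T$ is a stalkwise automorphism of $\cO_\C^\hbar$ and the situation is formally identical to the scaling-operator case, where $x \mapsto e^{\hbar x\partial_x}(x)$ played the same role. The only place requiring a small amount of care is confirming the translation analogue of Proposition \ref{prop:ecriture} (uniqueness of the normal form), since it is not stated separately; but as indicated above it follows by the same evaluation argument, testing against the exponentials $e^{px}$, which are eigenvectors for $T$. Given that, the lemma follows formally from Lemma \ref{lem:relationT} exactly as Lemma \ref{lem:orelocS} follows from Lemma \ref{lem:relation}.
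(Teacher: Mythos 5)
Your proposal is correct and matches the paper's approach: the paper's proof is the single line ``This follows immediately from Lemma \ref{lem:relationT}'', and your argument is precisely an unpacking of that, with the commutation relation $T\circ f=T(f)\circ T$ supplying the Ore condition and the invertibility of $T$ giving the identification of the localisation with $\cT_{\C,x}$. Two trivial slips: you write $Q\,S^{-m}$ where you mean $Q\,T^{-m}$, and the translation analogue of the normal form \emph{is} stated in the paper (Proposition \ref{prop:transecun}, which appears just after the lemma), though your evaluation against $e^{px}$ is a clean way to prove it.
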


\begin{proof}
This follows immediately from Lemma \ref{lem:relationT}.
\end{proof}

\begin{proposition}\label{prop:transecun}
Let $P \in \cT_{\C}$. Locally $P$ can be written in a unique way in the form
\begin{equation*}
P=\sum_{k=-m}^n f_k T^k
\end{equation*}
where $f_k \in \cO_{\C}^\hbar$ for $-m \leq k \leq n$ and $m,n \in \N$. 
\end{proposition}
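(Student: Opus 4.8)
The statement is the exact analogue of Proposition \ref{prop:ecriture}, with the scaling operator $S=e^{\hbar x\partial_x}$ replaced by the translation operator $T=e^{\hbar\partial_x}$, and the proof will follow the same two-part structure: first existence of the normal form $P=\sum_{k=-m}^n f_kT^k$, then uniqueness. The existence part is immediate from Lemma \ref{lem:relationT}: any element of $\cT_\C$ is by definition a finite sum of products of elements of $\cO_\C^\hbar$ and powers $T^{\pm 1}$, and using the commutation rule $T\circ f=T(f)\circ T$ (and its inverse version) one can move all occurrences of $T^{\pm 1}$ to the right of all the function coefficients, collecting terms with equal powers of $T$; since only finitely many powers occur, this produces the asserted form with $m,n\in\N$.

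For uniqueness, it suffices, exactly as in the proof of Proposition \ref{prop:ecriture}, to show that $\sum_{k=-m}^n f_kT^k=0$ forces $f_k=0$ for all $k$. First, composing on the right with $T^m$ (which is invertible, with inverse $T^{-1}$) reduces to the case $m=0$, i.e. $\sum_{k=0}^n f_kT^k=0$. Now apply this operator to the test functions $e^{\lambda x}$ for $\lambda\in\C$: since $T(e^{\lambda x})=e^{\hbar\partial_x}e^{\lambda x}=e^{\hbar\lambda}e^{\lambda x}$, hence $T^k(e^{\lambda x})=e^{k\hbar\lambda}e^{\lambda x}$, we get $\sum_{k=0}^n f_k e^{k\hbar\lambda}e^{\lambda x}=0$. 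Evaluating at a fixed point $x_0\in\C$ and dividing by $e^{\lambda x_0}\neq 0$ yields $\sum_{k=0}^n f_k(x_0)e^{k\hbar\lambda}=0$ in $\C^{\hbar,loc}$ for every $\lambda\in\C$. Viewing $g(z)=\sum_{k=0}^n f_k(x_0)z^k\in\C^{\hbar,loc}[z]$, this polynomial vanishes at the infinitely many distinct points $e^{\hbar\lambda}\in\C^{\hbar,loc}$ (distinct as $\lambda$ ranges over, say, $\N$), so $g=0$, i.e. $f_k(x_0)=0$ for all $k$. Since $x_0\in\C$ was arbitrary, $f_k\equiv 0$, proving uniqueness.

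The only point requiring a little care — and the place where the translation case differs from the scaling case — is the choice of test functions: in Proposition \ref{prop:ecriture} the monomials $x^p$ were eigenvectors of $S$, whereas here the natural eigenvectors of $T=e^{\hbar\partial_x}$ on $\cO_\C$ are the exponentials $e^{\lambda x}$, with $T(e^{\lambda x})=e^{\hbar\lambda}e^{\lambda x}$. One must check that the family $\{e^{\hbar\lambda}\}_{\lambda}$ contains infinitely many distinct elements of $\C^{\hbar,loc}$, which is clear since $e^{\hbar\lambda}=e^{\hbar\mu}$ would force $\hbar(\lambda-\mu)$ to be a period, impossible in $\C^{\hbar,loc}$ for $\lambda\neq\mu$. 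The reduction to $m=0$ and the Vandermonde/infinitely-many-roots argument are then formally identical to the previous proof, so in the write-up I would simply say "the proof is similar to that of Proposition \ref{prop:ecriture}, using $e^{\lambda x}$ in place of $x^p$," spelling out only the eigenvalue computation for $T$. I do not anticipate any genuine obstacle; the main thing to get right is the bookkeeping of which test functions diagonalize $T$.
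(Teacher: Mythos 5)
Your proof is correct and follows the same skeleton as the paper's: existence from Lemma \ref{lem:relationT}, reduction of uniqueness to the case $m=0$ by composing on the right with $T^m$, then evaluation on a family of test functions and evaluation at a point $x_0$. The one genuine difference is the choice of test functions, and it is a difference worth having. The paper evaluates on the monomials $x^p$, exactly as in Proposition \ref{prop:ecriture}; but in the translation case $x^p$ is no longer an eigenvector of $T$ --- one has $T^k(x^p)=(x+k\hbar)^p$, not $(x+\hbar)^{kp}$ as the printed computation asserts --- so the paper's conclusion really rests on a Vandermonde argument with the distinct nodes $x_0+k\hbar\in\C^{\hbar,loc}$ rather than on the stated ``polynomial with infinitely many roots'' argument, and the displayed formula is garbled. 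Your substitution of the exponentials $e^{\lambda x}$, which are honest eigenvectors of $T$ with eigenvalues $e^{\hbar\lambda}$, restores the exact structure of the scaling-operator proof: $g(z)=\sum_k f_k(x_0)z^k\in\C^{\hbar,loc}[z]$ vanishes at infinitely many distinct points of the field $\C^{\hbar,loc}$, hence is zero. The only point needing care --- that the $e^{\hbar\lambda}$ are pairwise distinct, since the coefficient of $\hbar$ in $e^{\hbar\lambda}-e^{\hbar\mu}$ is $\lambda-\mu$ --- is one you address explicitly. So your argument is a clean (and arguably more accurate) variant of the paper's.
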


\begin{proof}
The existence of such a form follows from Lemma \ref{lem:relationT}. To prove the uniqueness, it is sufficient to show that $\sum_{k=-m}^nf_k T^k=0$ implies that for every $-m \leq k \leq n, \; f_k=0$.
By considering the composition $P \circ T^m$ we can assume that $m=0$. We evaluate $P$ on $x^p$ and obtain
\begin{align*}
P(x^p)&=\sum_{k=0}^n f_k(x) T^k(x)\\
      &=\sum_{k=0}^n f_k(x) (x+h)^{kp}.
\end{align*}
For every $x_0 \in \C$ and $p \in \N$, the elements $(x_0+\hbar)^p$ are roots of the polynomial $\sum_{k=0}^n f_k(x_0)z^k \in \C^{\hbar,loc}[z]$. Thus, $f_k(x_0)=0$.
\end{proof}

\begin{proposition}
The algebra $\cT^+_\C$ is a Noetherian $\cO_\C^\hbar$-algebra.
\end{proposition}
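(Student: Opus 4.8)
The plan is to mimic the proof of Proposition~\ref{prop:Spcoh} almost verbatim, replacing the scaling operator $S=e^{\hbar x\partial_x}$ by the translation operator $T=e^{\hbar\partial_x}$. The key structural input is Lemma~\ref{lem:relationT}(i), which says $T\circ f=T(f)\circ T$ for every $f\in\cO_\C^\hbar$, where $f\mapsto T(f)$ denotes the substitution $x\mapsto x+\hbar$ (more precisely, if $f=\sum f_n\hbar^n$ then $T(f)=\sum f_n(x+\hbar)\hbar^n$). The point is that $T$ conjugates the coefficient ring $\cO_\C^\hbar$ into itself: the map $f\mapsto T(f)$ is a $\C^\hbar$-algebra automorphism of $\cO_\C^\hbar$ (its inverse being $f\mapsto T^{-1}(f)$, given by $x\mapsto x-\hbar$). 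Thus $\cT^+_\C$ is exactly the skew (Ore) extension of $\cO_\C^\hbar$ by the automorphism $T(\cdot)$, and by Proposition~\ref{prop:transecun} every element has a unique finite normal form $\sum_{k\ge 0}f_kT^k$.

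Concretely, I would argue as follows. First note that $\cO_\C^\hbar$ is itself Noetherian: it is the $\hbar$-adic completion of $\cO_{\C,x}[\hbar]$ (or, sheaf-theoretically, one invokes that $\cO_\C$ is coherent and the ring of germs $\cO_{\C,x}$ is Noetherian, so $\cO_{\C,x}^\hbar\simeq\varprojlim_n\cO_{\C,x}\otimes_\C(\C^\hbar/\hbar^n)$ is Noetherian by the standard completion argument). Then, exactly as in the proof of Proposition~\ref{prop:Spcoh}, I verify that the hypotheses of \cite[Theorem~5.1.1]{AK} hold: the base ring $\cO_\C^\hbar$ is Noetherian, $T(\cdot)$ is an automorphism of $\cO_\C^\hbar$, and $\cT^+_\C$ is generated over $\cO_\C^\hbar$ by the single element $T$ subject to the commutation relation $Tf=T(f)T$ of Lemma~\ref{lem:relationT}(i), together with the normal-form/uniqueness statement of Proposition~\ref{prop:transecun}. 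Applying that theorem yields that $\cT^+_\C$ is a (left and right) Noetherian $\cO_\C^\hbar$-algebra.

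The only step requiring any genuine thought, as opposed to a mechanical transcription, is the verification that $f\mapsto T(f)$ really is an automorphism of the full coefficient ring $\cO_\C^\hbar$ — for the scaling operator this was the automorphism $x\mapsto e^\hbar x$ of $\cO_{\C^\ast}$, which is visibly invertible; here one must check that $x\mapsto x+\hbar$ makes sense as an endomorphism of $\cO_{\C}^\hbar$ (it does, since for $f$ holomorphic, $f(x+\hbar)=\sum_{n\ge 0}\tfrac{\hbar^n}{n!}f^{(n)}(x)$ converges $\hbar$-adically) and that it is bijective with inverse $x\mapsto x-\hbar$. This is routine but is the one place where the translation case differs in substance from the scaling case, and I would state it as a short remark or fold it into the proof of Lemma~\ref{lem:relationT}. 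Everything else — coherence consequences, the passage to the localized algebra $\cT_\C$ — then follows the $\cS_{\C^\ast}$ template (cf. Lemma~\ref{lem:orelocT} and the analogue of Corollary~\ref{cor:cohS}), so the proof can legitimately be compressed, as the paper already signals.

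\begin{proof}
The proof is identical to that of Proposition~\ref{prop:Spcoh}, replacing $S$ by $T$. The substitution $x\mapsto x+\hbar$ defines, for $f=\sum_{n\ge 0}f_n\hbar^n\in\cO_\C^\hbar$, the element $T(f)=\sum_{n\ge 0}\bigl(\sum_{k\ge 0}\tfrac{\hbar^k}{k!}f_n^{(k)}\bigr)\hbar^n\in\cO_\C^\hbar$, and this is a $\C^\hbar$-algebra automorphism of $\cO_\C^\hbar$ with inverse $T^{-1}(\cdot)$ given by $x\mapsto x-\hbar$. Since $\cO_\C^\hbar$ is a Noetherian $\C^\hbar$-algebra, it follows from Lemma~\ref{lem:relationT}(i), Proposition~\ref{prop:transecun} and \cite[Theorem~5.1.1]{AK} that $\cT^+_\C$ is a Noetherian $\cO_\C^\hbar$-algebra.
\end{proof}
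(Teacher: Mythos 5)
Your proof is correct and follows the same route as the paper: both reduce the statement to \cite[Theorem 5.1.1]{AK} via the commutation relation of Lemma \ref{lem:relationT}(i) and the fact that the substitution underlying $T$ is an automorphism of the coefficient ring (the paper's own proof is exactly this, stated in two lines). Your added care in checking that $x\mapsto x+\hbar$ is a well-defined automorphism of $\cO_\C^\hbar$ (rather than of $\cO_\C$, as the paper loosely writes) is a worthwhile precision but not a departure in method.
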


\begin{proof}
It follows from point $(i)$ of Lemma \ref{lem:relationT} and the fact that $T$ is an automorphism of $\cO_{\C}$ that the hypothesis  of \cite[Theorem 5.1.1]{AK} are satisfied. This implies that $\cT^{+}_{\C}$ is Noetherian.
\end{proof}

\begin{corollary}
The algebra $\cT_\C$ is a coherent $\cO^\hbar_\C$-algebra.
\end{corollary}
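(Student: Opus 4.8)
The statement to prove is that $\cT_\C$ is a coherent $\cO^\hbar_\C$-algebra. The strategy is to imitate verbatim the proof of Corollary~\ref{cor:cohS} (coherence of $\cS_{\C^\ast}$), using the analogues already set up in this subsection for the translation-operator algebras: Proposition~\ref{prop:transecun} (unique normal form $P=\sum_{k=-m}^n f_k T^k$), Lemma~\ref{lem:orelocT} ($\theta_{Tx}$ is an Ore localization at $\{e^{n\hbar\partial_x}\}_{n\in\N}$), and the Noetherianity (hence coherence as a sheaf of rings) of $\cT^{+}_\C$ established just above.

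First I would note that the left $\cT_\C$-module $\cT_\C$ is finitely generated over itself. Then, to check coherence, I would fix an open set $U\subset\C$, a point $x\in U$, and a morphism of left $\cT_\C$-modules $f\colon \cT_\C^m|_U\to\cT_\C|_U$, and show $\ker f$ is locally finitely generated near $x$. Writing $Q_i=f(E_i)$ for the images of the canonical basis vectors, Proposition~\ref{prop:transecun} lets me find a neighborhood $V\subset U$ of $x$ and an integer $N\in\N$ with $Q_i=\bigl(\sum_{k=0}^{n_i} f_{ik}T^k\bigr)T^{-N}$, i.e.\ $Q_iT^N\in\cT^+_\C|_V$. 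This produces a morphism of left $\cT^+_\C|_V$-modules $\tilde f\colon \cT_\C^{+m}|_V\to\cT^+_\C|_V$, $P\mapsto f(P)T^N$, and one checks $f|_V\circ[\theta_T,\ldots,\theta_T]=\tilde f\,T^{-N}$, so $\ker\tilde f\simeq\ker\bigl(f|_V\circ[\theta_T,\ldots,\theta_T]\bigr)$ and, applying the (flat, by Lemma~\ref{lem:orelocT} and an Ore-localization argument as in the proof of Proposition~\ref{prop:flatS}(ii)) localization functor $\cT_\C\te_{\cT^+_\C}(\cdot)$, one gets $\cT_\C\bigl(\ker(f|_V\circ[\theta_T,\ldots,\theta_T])\bigr)\simeq\ker(f|_V)$. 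Since $\cT^+_\C$ is coherent, $\ker\tilde f$ is locally finitely generated, hence so is $\ker(f|_V)$, which proves coherence of $\cT_\C$.

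The only point requiring any care is the passage from $\ker\tilde f$ to $\ker(f|_V)$ via Ore localization: one must know that localizing a finitely generated module over $\cT^+_\C$ at the Ore set $\{e^{n\hbar\partial_x}\}_{n\in\N}$ commutes with taking kernels and preserves finite generation, which is exactly the flatness of the Ore localization (cited as Proposition 2.1.16 of \cite{Rob} in the proof of Proposition~\ref{prop:flatS}). Since the author explicitly says ``The proofs of the different results of this sub-section are very similar to the proofs of the previous section. Thus, we do not repeat certain arguments,'' I expect the actual proof to be a one- or two-line appeal to the proof of Corollary~\ref{cor:cohS}, with $S$, $\cS_{\C^\ast}$, $\cS^+_{\C^\ast}$, $\theta_S$ replaced throughout by $T$, $\cT_\C$, $\cT^+_\C$, $\theta_T$. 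The main (and essentially only) obstacle is bookkeeping: making sure the normal-form and Ore-localization substitutions go through verbatim, which they do because Proposition~\ref{prop:transecun}, Lemma~\ref{lem:orelocT}, and the Noetherianity statement are the faithful analogues of their counterparts in the scaling-operator case.
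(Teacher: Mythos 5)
Your proposal is correct and matches the paper exactly: the paper's proof is the one-line statement that the argument is analogous to that of Corollary \ref{cor:cohS}, and your substitution of $T$, $\cT_\C$, $\cT^+_\C$, $\theta_T$ for $S$, $\cS_{\C^\ast}$, $\cS^+_{\C^\ast}$, $\theta_S$ throughout that proof, supported by Proposition \ref{prop:transecun} and Lemma \ref{lem:orelocT}, is precisely the intended argument.
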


\begin{proof}
The proof in analogous to the proof of Corollary \ref{cor:cohS}.
\end{proof}

We let $X$ be the symplectic surface $(\C^\ast \times \C,(dx_1 \wedge dx_2)/x_1)$ and consider the star-algebra $\cA_X$ (we write $\cA_X$ instead of $\cA_X^{\opp}$) and the polarization on $(X, \cA_X)$ which are defined in subsection \ref{subsub:ccetoile} B). It follows from Proposition \ref{prop:transecun} that there is a morphism of $\pi^{-1}\cO^\hbar_{\C}$-modules defined by
\begin{equation}\label{mor:AT}
\psi \colon \pi^{-1} \cT_{\C} \to \cA_X, \; f(x) \ni  \cO_{\C}^\hbar \mapsto f(x_2), \; T^n \mapsto x_1^n \; \textnormal{for $n \in \Z$}.
\end{equation}

\begin{proposition}
The morphism \eqref{mor:AT} is a morphism of $\pi^{-1}\cO_{\C}^\hbar$-algebras.
\end{proposition}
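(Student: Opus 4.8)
The plan is to mimic the proof of Proposition \ref{prop:morofalg} almost verbatim, replacing the scaling operator $S$ on $\C^\ast\times\C^\ast$ by the translation operator $T$ on $X=(\C^\ast\times\C,(dx_1\wedge dx_2)/x_1)$ equipped with the opposite star-product $\circledast$ of subsection \ref{subsub:ccetoile} B). By \eqref{mor:AT}, $\psi$ is already a morphism of $\pi^{-1}\cO_\C^\hbar$-modules, where the $\pi^{-1}\cO_\C^\hbar$-module structure on $\cA_X$ is $f(x)\cdot a = f(x_2)\circledast a$; and by Proposition \ref{prop:transecun} the sheaf of algebras $\cT_\C$ is generated over $\cO_\C^\hbar$ by $T$ and $T^{-1}$. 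Hence it suffices to check that $\psi$ is compatible with products of these generators.

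First I would dispose of the trivial checks: $\psi(\id)=\psi(T^0)=x_1^0=1$, and, since $\partial_{x_2}^k(x_1^{\pm1})=0$ for $k\geq 1$, only the order-zero term of $\circledast$ survives and gives $\psi(T)\circledast\psi(T^{-1})=x_1\,x_1^{-1}=1=\psi(\id)$. The substantive point is the compatibility with the left $\cO_\C^\hbar$-action, namely $\psi(T\circ f)=\psi(T)\circledast\psi(f)$ and $\psi(T^{-1}\circ f)=\psi(T^{-1})\circledast\psi(f)$ for $f\in\cO_\C^\hbar$. For the first, I would invoke Lemma \ref{lem:relationT}(i) to rewrite $T\circ f=T(f)\circ T$, so that $\pi^{-1}\cO_\C^\hbar$-linearity of $\psi$ gives $\psi(T\circ f)=T(f)(x_2)\circledast x_1=T(f)(x_2)\,x_1$ (the last step because $T(f)(x_2)$ depends on $x_2$ only). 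On the other hand $x_1$ is an eigenvector of $x_1\partial_{x_1}$ of eigenvalue $1$, whence $\psi(T)\circledast\psi(f)=x_1\circledast f(x_2)=\sum_{k\geq 0}\frac{\hbar^k}{k!}\,x_1\,\partial_{x_2}^k f(x_2)=x_1\,f(x_2+\hbar)=x_1\,T(f)(x_2)$, and the two expressions coincide since $x_1$ and any function of $x_2$ commute in $\cO_X$. The case of $T^{-1}$ is identical, using Lemma \ref{lem:relationT}(ii) and the fact that $x_1^{-1}$ is an eigenvector of $x_1\partial_{x_1}$ of eigenvalue $-1$, which produces the opposite shift $x_2\mapsto x_2-\hbar$ on both sides.

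Finally, having established multiplicativity on the generating set $\{\,f,\,T,\,T^{-1}\,\}$, I would conclude that $\psi(PQ)=\psi(P)\circledast\psi(Q)$ for arbitrary local sections $P,Q$ of $\cT_\C$ by writing them in the normal form $\sum_k f_kT^k$ of Proposition \ref{prop:transecun} and moving all factors $T^{\pm1}$ to one side with the relations just obtained. I do not anticipate any real obstacle here; the only point requiring care is the bookkeeping imposed by working with the opposite product $\circledast$, where $x_1$ takes over the role played by $x_2$ in subsection \ref{subsub:cetcet} --- the underlying mechanism being that conjugation by $T^{\pm1}$ inside $\cT_\C$ corresponds exactly to $\circledast$-conjugation by $x_1^{\pm1}$ inside $\cA_X$.
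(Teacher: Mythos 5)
Your proposal is correct and follows essentially the same route as the paper, which simply transposes the computation of Proposition \ref{prop:morofalg} to the translation operator $T$ and the opposite product $\circledast$: the key identity $\psi(T\circ f)=T(f)(x_2)\,x_1=x_1\circledast f(x_2)=\psi(T)\circledast\psi(f)$ is exactly the intended analogue of $S(f)\star x_2=x_2\star f$. Your additional explicit checks (the unit, $x_1\circledast x_1^{-1}=1$, the $T^{-1}$ case via the eigenvalue $-1$, and the reduction to generators via the normal form of Proposition \ref{prop:transecun}) are all accurate and merely spell out what the paper leaves implicit.
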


\begin{proof}
The proof is similar to the proof of Proposition \ref{prop:morofalg}.
\end{proof}

We endow the algebra $\cA_X$ with the filtration defined in \eqref{filt:DQalg}. This filtration induces a filtration $(\cT_{\C}(k))_{k \in \Z}$ on $\pi^{-1}\cT_{\C}$ such that for every $(p_1,p_2) \in X$, we have the following isomorphism
\begin{equation*}
\cT_{\C}(0)_{(p_1,p_2)} / \cT_{\C}(-1)_{(p_1,p_2)} \simeq \cO_{\C,\;p_1}[x_1,x_1^{-1}].
\end{equation*}

\begin{lemma}\label{lem:flatquotientT}
The sheaf of rings $\cA_X(0) / \cA_X(-1)$ is flat over the sheaf of rings $\cT_{\C}(0) / \cT_{\C}(-1)$.
\end{lemma}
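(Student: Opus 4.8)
The plan is to mirror exactly the argument used for Lemma \ref{lem:flatquotientS}, since the algebraic situation is identical up to renaming the base factor. First I would reduce the claim to a statement about stalks: flatness of a sheaf of rings over another is a local (indeed stalkwise) condition, so it suffices to show that for each point $(p_1,p_2) \in \C^\ast \times \C$ the ring $\cA_X(0)_{(p_1,p_2)} / \cA_X(-1)_{(p_1,p_2)}$ is flat over $\cT_{\C}(0)_{(p_1,p_2)} / \cT_{\C}(-1)_{(p_1,p_2)}$. Using the filtration \eqref{filt:DQalg} on $\cA_X$ one has $\cA_X(0)/\cA_X(-1) \simeq \cA_X(0)/\hbar\cA_X(0) \simeq \cO_X$, so the left-hand stalk is $\cO_{\C^\ast \times \C,\,(p_1,p_2)}$. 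On the other side, the induced filtration on $\pi^{-1}\cT_{\C}$ is chosen (as stated just before the lemma) so that $\cT_{\C}(0)_{(p_1,p_2)}/\cT_{\C}(-1)_{(p_1,p_2)} \simeq \cO_{\C,\,p_1}[x_1,x_1^{-1}]$, where the Laurent variable $x_1$ is the symbol of the translation operator $T$ which lives in degree zero.

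Then I would factor the morphism $\cO_{\C,\,p_1}[x_1,x_1^{-1}] \to \cO_{\C^\ast \times \C,\,(p_1,p_2)}$ through the polynomial ring $\cO_{\C,\,p_1}[x_1]$. The inclusion $\cO_{\C,\,p_1}[x_1] \hookrightarrow \cO_{\C^\ast \times \C,\,(p_1,p_2)}$ is flat: it is the composition of the flat base change $\cO_{\C,\,p_1} \to \cO_{\C,\,p_1}[x_1]$ being inverted by the analytic localization, or more directly, $\cO_{\C^\ast \times \C,\,(p_1,p_2)}$ is the stalk of the structure sheaf of a product and is a flat (in fact faithfully flat) module over the ring of functions polynomial in the second coordinate by the standard fact that analytic localization is flat over polynomial subrings (this is the same ingredient invoked via Proposition \ref{prop:flat} in the Rees $\cD$-module case, transported here). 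Finally, $\cO_{\C,\,p_1}[x_1,x_1^{-1}]$ is the localization of $\cO_{\C,\,p_1}[x_1]$ at the multiplicative set generated by $x_1$, and $x_1$ is already invertible in $\cO_{\C^\ast \times \C,\,(p_1,p_2)}$ because the first factor is $\C^\ast$; hence the map factors through the localization and $\cO_{\C^\ast \times \C,\,(p_1,p_2)}$ is flat over $\cO_{\C,\,p_1}[x_1,x_1^{-1}]$ as well, since localization is flat and a flat module over a localization-source descends to flatness over the localization. This proves the claim.

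I do not expect any genuine obstacle here; the only point requiring a little care is bookkeeping which coordinate plays which role, because for the surface $(\C^\ast \times \C, (dx_1 \wedge dx_2)/x_1)$ the polarization in \ref{subsub:ccetoile} B) uses $\pi = \pi_2$ (projection to $x_2 \in \C$), so the fiber variable of $\pi$ is $x_1 \in \C^\ast$ and it is that variable which becomes the Laurent variable in $\cT_{\C}(0)/\cT_{\C}(-1)$. Once that identification is pinned down, the rest is the elementary commutative-algebra chain (flat base change, then flat localization) already spelled out in the proof of Lemma \ref{lem:flatquotientS}.
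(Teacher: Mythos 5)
Your proposal is correct and follows essentially the same route as the paper, which simply transports the stalkwise argument of Lemma \ref{lem:flatquotientS}: identify $\cA_X(0)_{(p_1,p_2)}/\cA_X(-1)_{(p_1,p_2)}$ with the analytic local ring, factor the map from the Laurent polynomial ring through the polynomial ring, use flatness of the analytic local ring over the polynomial subring, and conclude since the Laurent ring is a localization at an element already invertible in the target. Your closing remark correctly pins down the only bookkeeping subtlety (that for $\pi=\pi_2$ the fiber variable $x_1$ is the Laurent variable), a point on which the paper itself is a bit loose.
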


\begin{proof}
The proof goes as the proof of Lemma \ref{lem:flatquotientS}.
\end{proof}

\begin{proposition}\label{prop:flatT} \begin{enumerate}[(i)]
\item The ring $\cA_X$ is flat over the ring $\pi^{-1} \cT_{\C}$,
\item The ring $\cT_{\C}$ is flat over $\cT^+_{\C}$,
\item  The ring $\cA_X$ is flat over the ring $\pi^{-1}\cT^+_{\C}$.
\end{enumerate} 
\end{proposition}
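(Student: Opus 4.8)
The plan is to mirror exactly the argument already used for Proposition \ref{prop:flatS}, since the algebras $\cT_\C$, $\cT^+_\C$ play the role of $\cS_{\C^\ast}$, $\cS^+_{\C^\ast}$ and the surface $(\C^\ast\times\C,(dx_1\wedge dx_2)/x_1)$ with its star-algebra $\cA_X$ plays the role of $(\C^\ast\times\C^\ast,\dots)$. The three flatness assertions will be proved in the same order, each deduced from the infrastructure already set up in this subsection.

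For part (i), the strategy is to invoke Theorem \ref{thm:flatcrit} with $\cB_X=\pi^{-1}\cT_\C$. Its hypotheses are: coherence of $\pi^{-1}\cT_\C$, which follows from the coherence of $\cT_\C$ just established (the analogue of Corollary \ref{cor:cohS}); and flatness of $\cA_X/\hbar\cA_X$ over $\pi^{-1}\cT_\C / \hbar\,\pi^{-1}\cT_\C$, i.e.\ of $\cA_X(0)/\cA_X(-1)$ over $\cT_\C(0)/\cT_\C(-1)$, which is precisely Lemma \ref{lem:flatquotientT}. Here one must be slightly careful that the filtration on $\pi^{-1}\cT_\C$ induced from the canonical filtration \eqref{filt:DQalg} on $\cA_X$ has $\hbar$ in degree $-1$, so that $\cT_\C(0)/\cT_\C(-1)\simeq \cT_\C/\hbar\cT_\C$; this is built into the construction preceding Lemma \ref{lem:flatquotientT}. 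With these two inputs, Theorem \ref{thm:flatcrit} yields that $\cA_X$ is flat over $\pi^{-1}\cT_\C$.

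For part (ii), the point is that $\theta_{T} \colon \cT^+_\C \hookrightarrow \cT_\C$ is, stalkwise, an Ore localization with respect to the multiplicative set $\{e^{n\hbar\partial_x}\}_{n\in\N}$ by Lemma \ref{lem:orelocT}; since an Ore localization of a ring is flat over it (the reference \cite[Proposition 2.1.16]{Rob} used in the proof of Proposition \ref{prop:flatS}(ii) applies verbatim), $\cT_\C$ is flat over $\cT^+_\C$. Part (iii) is then immediate by composing: $\cA_X$ is flat over $\pi^{-1}\cT_\C$ by (i), and $\pi^{-1}\cT_\C$ is flat over $\pi^{-1}\cT^+_\C$ by (ii) and exactness of $\pi^{-1}$, so the composite is flat. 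I expect no genuine obstacle here — the whole point of the way the preceding lemmas were phrased is to make this proof a routine transcription; the only thing requiring any attention is checking that the filtration bookkeeping for $\cT_\C$ inside $\cA_X$ really does produce the claimed associated graded $\cO_{\C,p_1}[x_1,x_1^{-1}]$, which is what makes Lemma \ref{lem:flatquotientT} (and hence step (i)) applicable.

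\begin{proof}
The proof is entirely parallel to that of Proposition \ref{prop:flatS}.
\begin{enumerate}[(i)]
\item Since $\cT_\C$ is coherent, $\pi^{-1}\cT_\C$ is coherent, and by Lemma \ref{lem:flatquotientT} the ring $\cA_X(0)/\cA_X(-1)$ is flat over $\cT_\C(0)/\cT_\C(-1)$; as $\hbar$ is in degree $-1$ in the induced filtration on $\pi^{-1}\cT_\C$, this says exactly that $\cA_X/\hbar\cA_X$ is flat over $\pi^{-1}\cT_\C/\hbar\,\pi^{-1}\cT_\C$. Theorem \ref{thm:flatcrit} then gives that $\cA_X$ is flat over $\pi^{-1}\cT_\C$.
\item By Lemma \ref{lem:orelocT}, at each stalk $\theta_{Tx}\colon \cT^+_{\C,x}\to \cT_{\C,x}$ is the Ore localization with respect to $\{e^{n\hbar\partial_x}\}_{n\in\N}$, hence flat by \cite[Proposition 2.1.16]{Rob}; therefore $\cT_\C$ is flat over $\cT^+_\C$.
\item This follows from (i), from (ii), and from the exactness of $\pi^{-1}$.
\end{enumerate}
\end{proof}
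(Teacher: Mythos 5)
Your proof is correct and follows essentially the same route as the paper, which simply states that the argument of Proposition \ref{prop:flatS} (coherence plus Lemma \ref{lem:flatquotientT} feeding into Theorem \ref{thm:flatcrit} for (i), Ore localisation for (ii), composition for (iii)) carries over verbatim. You have merely written out the transcription the paper leaves implicit, including the correct bookkeeping that $\hbar$ sits in degree $-1$ of the induced filtration.
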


\begin{proof}
The proof goes exactly as the proof of Proposition \ref{prop:DQflatDH}.
\end{proof}

Consider the functor
\begin{align*}
(\cdot)^{\cA_X} \colon \Mod_\coh(\cT_{\C}) &\to  \Mod_\coh(\cA_X) \\
\cN & \mapsto \cA_X \te_{\pi^{-1}\cT_{\C}} \pi^{-1} \cN.
\end{align*}

We have the following result
\begin{corollary}\label{cor:SolT}
Let $\cM \in \Mod_{\coh}(\cT_{\C})$. Then,
\begin{align*}
\fRHom_{\cT_{\C}}(\cM, \cO^\hbar_{\C}) \simeq& \mathrm{R} \pi_\ast  \fRHom_{\cA_X}(\cM^{\cA_X}, \cO^\hbar_{\C}).
\end{align*}
\end{corollary}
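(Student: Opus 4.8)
The plan is to obtain the statement as a direct application of Proposition~\ref{prop:compsol}, exactly as Corollaries~\ref{cor:SolRD} and~\ref{cor:SolS} were derived. Concretely, I will feed that proposition the data $\cR_\Lambda = \cT_\C$, $\cL_\Lambda = \cO^\hbar_\C$, the algebra monomorphism $\psi$ of~\eqref{mor:AT}, and the polarization $\cP = (\pi \colon X \to \C,\ \iota,\ \cL_2)$ of $(X,\cA_X)$ described in Subsection~\ref{subsub:ccetoile}~B). The work therefore reduces to checking the two conditions in the definition of a compatible pair, together with the standing coherence and monomorphism hypotheses of that definition.

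First I would note that $\cT_\C$ is a coherent $\cO^\hbar_\C$-algebra (the corollary following the Noetherianity statement for $\cT^+_\C$) and that $\psi$ is a monomorphism of algebras: by Proposition~\ref{prop:transecun} a section of $\cT_\C$ has a unique local expression $\sum_k f_k T^k$, which $\psi$ sends to $\sum_k f_k(x_2)\, x_1^k \in \cA_X$; this vanishes only if all the $f_k$ vanish, since the powers $x_1^k$ are $\cO_X$-linearly independent in $\cA_X$. Next, the extension functor $(\cdot)^{\cA_X} = \cA_X \te_{\pi^{-1}\cT_\C}\pi^{-1}(\cdot)$ is exact: this is immediate from Proposition~\ref{prop:flatT}(i), which states that $\cA_X$ is flat over $\pi^{-1}\cT_\C$, together with exactness of $\pi^{-1}$. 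So condition~(i) of compatibility holds.

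The remaining point is condition~(ii): the $\cT_\C$-modules $\pi_\ast \cL_2$ and $\cO^\hbar_\C$ must coincide. Here $\Lambda_2 = \{x_1 = 1\}$, the restriction $\pi|_{\Lambda_2} \colon \Lambda_2 \stackrel{\sim}{\to} \C$ is an isomorphism, and $\cL_2 \simeq \cO^\hbar_{\Lambda_2}$, so $\pi_\ast \cL_2 \simeq \cO^\hbar_\C$ as $\cO^\hbar_\C$-modules. It then suffices to check that the $\cT_\C$-action transported along $\psi$ from the $\cA_X$-action on $\cO^\hbar_{\Lambda_2}$ --- which by the description in Subsection~\ref{subsub:ccetoile}~B) is $x_1\cdot f = e^{\hbar\partial_{x_2}}f$ and $x_2 \cdot f = x_2 f$ --- is precisely the tautological action of $\cT_\C \subset \cD^\hbar_\C$ on $\cO^\hbar_\C$; indeed $\psi(T) = x_1$ acts as $e^{\hbar\partial_{x_2}} = T$ and $\psi(f(x)) = f(x_2)$ acts by multiplication by $f$. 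With both conditions verified, Proposition~\ref{prop:compsol} yields $\fRHom_{\cT_\C}(\cM,\cO^\hbar_\C) \simeq \mathrm{R}\pi_\ast\fRHom_{\cA_X}(\cM^{\cA_X},\cO^\hbar_\C)$, which is the claim.

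The only mildly delicate step is this last matching of the two module structures on $\cO^\hbar_\C$, but it is forced by the definition of the polarization and by formula~\eqref{mor:AT}; everything else is a flatness or coherence input already in hand. Since the argument is essentially a word-for-word transcription of the proofs of Corollaries~\ref{cor:SolRD} and~\ref{cor:SolS}, no new ideas are required --- the substance of the statement lies in Propositions~\ref{prop:compsol} and~\ref{prop:flatT}.
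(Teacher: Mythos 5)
Your argument is correct and takes the same route as the paper: both deduce the statement by applying Proposition~\ref{prop:compsol} with the data $\cR_\Lambda = \cT_\C$, $\cL_\Lambda = \cO^\hbar_\C$, the monomorphism~\eqref{mor:AT}, and the polarization of Subsection~\ref{subsub:ccetoile}~B). The paper's proof is a one-line invocation of that proposition; your write-up merely makes explicit the verification of the compatibility hypotheses (coherence, injectivity of $\psi$, exactness via Proposition~\ref{prop:flatT}(i), and the identification of the two $\cT_\C$-module structures on $\cO^\hbar_\C$), all of which are checked correctly.
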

\begin{proof}
This follows from Proposition \ref{prop:compsol}.
\end{proof}

\end{document}